\newcommand{\R}{\mathbb{R}}
\newcommand{\C}{\mathbb{C}}
\newcommand{\N}{\mathbb{N}}
\newcommand{\Z}{\mathbb{Z}}
\newcommand{\E}{\mathbb{E}}
\newcommand{\calM}{\mathcal{M}}
\newcommand{\calS}{\mathcal{S}}
\newcommand{\calR}{\mathcal{R}}
\newcommand{\bla}{\big \langle}
\newcommand{\bra}{\big \rangle}
\numberwithin{equation}{section}
\newcommand{\ud}[0]{\,\mathrm{d}}
\newcommand{\dist}[0]{\operatorname{dist}}
\newcommand{\abs}[1]{|#1|}
\newcommand{\Babs}[1]{\Big|#1\Big|}
\newcommand{\Norm}[2]{\|#1\|_{#2}}
\newcommand{\BNorm}[2]{\Big\|#1\Big\|_{#2}}
\newcommand{\pair}[2]{\langle #1,#2 \rangle}
\newcommand{\Bpair}[2]{\Big\langle #1,#2 \Big\rangle}
\newcommand{\ave}[1]{\langle #1\rangle}
\newcommand{\Off}[0]{\operatorname{Off}}
\newcommand{\BMO}[0]{\operatorname{BMO}}
\newcommand{\supp}[0]{\operatorname{spt}}
\newcommand{\loc}[0]{\operatorname{loc}}
\newcommand{\osc}[0]{\operatorname{osc}}
\newcommand{\D}[0]{\mathbb{D}}
\newcommand{\eps}[0]{\varepsilon}
\newcommand{\UMD}{\operatorname{UMD}}
\newcommand{\ch}[0]{\operatorname{ch}}
\newcommand{\calD}[0]{\mathcal{D}}
\newcommand{\wt}[1]{{\widetilde{#1}}}
\theoremstyle{plain}
\newtheorem{thm}[equation]{Theorem}
\newtheorem{lem}[equation]{Lemma}
\newtheorem{prop}[equation]{Proposition}
\theoremstyle{definition}
\newtheorem{defn}[equation]{Definition}
\theoremstyle{remark}
\newtheorem{rem}[equation]{Remark}
\title{Off-diagonal estimates for bi-commutators}
\author{Emil Airta}
\author{Tuomas Hyt\"onen}
\author{Kangwei Li}
\author{Henri Martikainen}
\author{Tuomas Oikari}
\address[E.A., T.H., H.M, \& T.O.]{Department of Mathematics and Statistics, University of Helsinki, P.O.B. 68, FI-00014 University of Helsinki, Finland}
\email{emil.airta@helsinki.fi}
\email{tuomas.hytonen@helsinki.fi}
\email{henri.martikainen@helsinki.fi}
\email{tuomas.v.oikari@helsinki.fi}
\address[K.L.]{Center for Applied Mathematics, Tianjin University, Weijin Road 92, 300072 Tianjin, China}
\email{kli@tju.edu.cn}
\subjclass[2010]{42B20}
\keywords{Calder\'on--Zygmund operators, singular integrals, multi-parameter analysis, commutators}
\begin{document}

\allowdisplaybreaks

\begin{abstract}
We study the bi-commutators $[T_1, [b, T_2]]$ of pointwise multiplication and Calder\'on--Zygmund operators, and characterize their $L^{p_1}L^{p_2} \to L^{q_1}L^{q_2}$ boundedness for several off-diagonal regimes of the mixed-norm integrability exponents $(p_1,p_2)\neq(q_1,q_2)$. The strategy is based on a bi-parameter version of the recent approximate weak factorization method.
\end{abstract}

\maketitle
%\tableofcontents

\section{Introduction}
In this paper we characterise many new estimates for \textbf{bi}-commutators. The classical commutators have the form $[b,T]\colon f \mapsto bTf - T(bf)$, where $T$ is a singular integral operator
\begin{equation}\label{eq:SIO}
Tf(x) = \int_{\R^d} K(x,y)f(y)\ud y.
\end{equation}
The expression \eqref{eq:SIO} represents a broad class of linear transformations of functions arising across analysis.
Distinguished special cases are the Hilbert transform $H$ in dimension $d = 1$, which has the kernel $K(x,y) = \frac{1}{x-y}$, and the Riesz transforms $R_j$ in dimensions $d \ge 2$, which have
the kernel $K_j(x,y) = \frac{x_j-y_j}{|x-y|^{d+1}}$, $j=1,\ldots, d$.

The Hilbert transform lies in the scope of complex analysis, and the Hilbert commutator $[b,H]$ is connected to Hankel operators.
A classical theorem of Nehari \cite{Ne} characterises the boundedness of the Hilbert commutators $[b,H]$ via this link. On the other hand,
the result of Coifman--Rochberg--Weiss \cite{CRW} brings the commutators to the heart of harmonic analysis by showing that
\begin{equation}\label{eq:commutatorbb}
\|b\|_{\BMO} \lesssim \|[b,T]\|_{L^p(\R^d) \to L^p(\R^d)} \lesssim \|b\|_{\BMO}, \qquad p \in (1,\infty),
\end{equation}
for a class of non-degenerate singular integrals $T$ on $\R^d$. Here $\BMO$ stands for the usual space of functions of bounded mean oscillation:
$$
\Norm{b}{\BMO} :=\sup_I \fint_I  \abs{b-\ave{b}_I},
$$
where the supremum is over all cubes $I \subset \R^d$ and $\ave{b}_I = \fint_I b := \frac{1}{|I|} \int_I b$.

The off-diagonal situation $[b,T]\colon L^p\to L^q$, $p \ne q$, is also completely understood. In the case $1<p<q<\infty$, a two-sided estimate like \eqref{eq:commutatorbb}
with $\BMO$ replaced by the homogeneous H\"older space $\dot C^{0,\alpha}$,
$$
\Norm{b}{\dot C^{0,\alpha}} := \sup_{x \ne y} \frac{|b(x) - b(y)|}{|x-y|^{\alpha}}, \qquad \alpha :=d\Big(\frac{1}{p}-\frac{1}{q}\Big),
$$
was obtained by Janson \cite{Jan}. The remaining range with $1<q<p<\infty$ was characterised only very recently in \cite{Hy5}:
$$
 \|[b,T]\|_{L^p \to L^q} \sim \Norm{b}{\dot L^{r}} := \inf_{c \in \C} \Norm{b-c}{L^r}, \qquad \frac{1}{r} :=\frac{1}{q}-\frac{1}{p}.
$$

As the commutator annihilates constants, all of the spaces above also have the feature that they do not see constants. This philosophy is more complicated
in the bi-parameter setting where the bi-commutator annihilates all functions that depend only on one of the variables $(x_1, x_2) \in \R^{d_1} \times \R^{d_2}$.

The motivation for commutator estimates stems from their many applications and connections to modern harmonic analysis. For example, the $L^p\to L^p$ characterization yields
factorizations for Hardy spaces \cite{CRW} and implies various div-curl lemmas relevant for compensated compactness \cite{CLMS}. In \cite{Hy5}, the off-diagonal $L^p\to L^q$ boundedness for $q<p$ is connected to a conjecture of Iwaniec \cite{Iwa} about the prescribed Jacobian problem.

In this paper, we address the question of the off-diagonal $L^p(\R^d) \to L^q(\R^d)$ boundedness of the bi-commutators $[T_1, [b, T_2]]$, where each $T_i$
is a Calder\'on--Zygmund operator on $\R^{d_i}$ and $\R^d$ is viewed as the bi-parameter product space $\R^d = \R^{d_1} \times \R^{d_2}$.
Due to the product space nature of the problem, it is natural to introduce an additional level of generality by allowing different integrability exponents in the $x_1$ and $x_2$ coordinates, thereby leading to the question of $L^{p_1}_{x_1}L^{p_2}_{x_2}\to L^{q_1}_{x_1}L^{q_2}_{x_2}$ boundedness. In contrast to the three qualitatively different regimes $p<q$, $p=q$ and $p>q$ for the commutator, there will now be nine different situations depending on the relative size of both $p_1,q_1$ on the one hand, and $p_2,q_2$ on the other hand.

We are interested specifically on the off-diagonal case $(p_1, p_2) \ne (q_1, q_2)$. The diagonal, involving the product $\BMO$ space of Chang and Fefferman \cite{CF1, CF2},
has e.g. been studied in Ferguson--Sadosky \cite{FS} and Ferguson--Lacey \cite{FL}, with further extensions appearing in \cite{LPPW,LPPW2,OPS}.

Our question is the natural bi-parameter analogue of \cite{Hy5}. In fact, \cite{Hy5} already suggests a naive conjecture, where in each regime of the exponents,
we should have the corresponding natural vector-valued space. For example, in the case
$$
p_1 < q_1 \textup{ and } p_2 = q_2
$$
we could expect the space $\dot C^{0, \beta_1}_{x_1}(\BMO_{x_2})$
and in the case
$$
p_1 = q_1 \textup{ and } p_2 < q_2
$$
we could expect the space $\BMO_{x_1}(\dot C^{0, \beta_2}_{x_2})$, where $\beta_i :=d_i\Big(\frac{1}{p_i}-\frac{1}{q_i}\Big)$. Somewhat strikingly,
we show that the naive conjecture is not always the right one. In the second case, the space $\dot C^{0, \beta_2}_{x_2}(\BMO_{x_1})$,
where the order of the spaces $\dot C^{0, \beta_2}_{x_2}$ and $\BMO_{x_1}$ has been switched to an unexpected order, provides the correct
sufficient and necessary condition. 

The following Theorem \ref{thm:main} is our main result. It verifies the naive conjecture in some cases and proves an unexpected necessary and sufficient condition in some other cases.
Some characterisations are left for future work as they do not appear to be amenable to our current methods.
The stated theorem is a simplified and shortened version of the obtained new estimates -- for example, the symmetry assumption is used here purely for convenience in order not to have to include partial adjoints $[T_1^*, [b,T_2]]$ in the estimates. In the three lower-right cases of the table of Theorem \ref{thm:main}, where only an upper bound is stated, we also obtain certain related lower bounds, but these do not admit a simple formulation in terms of classical function spaces, and are therefore omitted in this Introduction.
\begin{thm}\label{thm:main}
Let $T_1$ and $T_2$ be two symmetrically non-degenerate CZOs on $\R^{d_1}$ and $\R^{d_2}$, respectively, $b \in L^2_{\loc}(\R^{d_1+d_2})$ and $p_1, p_2, q_1, q_2 \in (1,\infty)$. Let
\begin{equation*}
\begin{split}
  \beta_i :=d_i\Big(\frac{1}{p_i}-\frac{1}{q_i}\Big),\quad \text{if}\quad p_i<q_i;\qquad
  \frac{1}{r_i} :=\frac{1}{q_i}-\frac{1}{p_i},\quad \text{if}\quad p_i>q_i.
\end{split}
\end{equation*}
Then $\|[T_1, [b, T_2]]\|_{L^{p_1}_{x_1}L^{p_2}_{x_2}\to L^{q_1}_{x_1}L^{q_2}_{x_2}}$ has upper and lower bounds according to the following table:
\begin{center}
\begin{tabular}{ c | c | c | c }
		& $p_1<q_1$ & $p_1=q_1$ & $p_1>q_1$ \\ \hline
		& & & \\
$p_2<q_2$ &  $\sim\| b\|_{\dot C^{0,\beta_1}_{x_1}(\dot C^{0,\beta_2}_{x_2})}$	&  $\sim\|b\|_{\dot C^{0,\beta_2}_{x_2}(\BMO_{x_1})}$	
& $\lesssim\|b\|_{\dot L^{r_1}_{x_1}(\dot C^{0,\beta_2}_{x_2})}$ \\
& & &  $\gtrsim\|b\|_{\dot C^{0,\beta_2}_{x_2}(\dot L^{r_1}_{x_1})}$ \\ \hline
& &  & \\
$p_2=q_2$ & $\sim\|b\|_{\dot C^{0,\beta_1}_{x_1}(\BMO_{x_2})}$	& $\lesssim \|b\|_{\BMO_{\rm prod}}$  &  $\lesssim\|b\|_{\dot L^{r_1}_{x_1}(\BMO_{x_2})}$ \\
	& & $\gtrsim \|b\|_{\BMO_{\rm rect}}$ & \\ \hline
	& & & \\
$p_2>q_2$ & $\sim\| b\|_{\dot C^{0,\beta_1}_{x_1}(\dot L^{r_2}_{x_2})}$	& $\lesssim\| b\|_{\BMO_{x_1}(\dot L^{r_2}_{x_2})}$ & $\lesssim\| b\|_{\dot L^{r_1}_{x_1}(\dot L^{r_2}_{x_2})}$	\\
\end{tabular}
\end{center}
\end{thm}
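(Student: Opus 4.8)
The nine regimes are handled by a common two-part scheme: a bi-parameter \emph{dyadic representation} for the upper bounds and a bi-parameter \emph{approximate weak factorization} (awf) for the lower bounds, the latter being the bi-parameter incarnation of the method of \cite{Hy5}. The symmetric non-degeneracy hypothesis on $T_1,T_2$ enters only in the lower bounds.

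\textbf{Upper bounds.} First I would represent each $T_i$, modulo an average over random dyadic grids $\mathcal D_i$ on $\R^{d_i}$, by bi-parameter dyadic model operators (shifts, and full/partial paraproducts) of polynomially growing complexity, thereby reducing $[T_1,[b,T_2]]$ to a summable family of dyadic bi-commutators. For each such model term one performs a paraproduct decomposition of $b$ with respect to the two grids and estimates each resulting piece by H\"older's inequality in the mixed-norm scale combined with square-function and maximal-function bounds. In a coordinate $x_i$ with $p_i<q_i$, an oscillation bound of H\"older type converts into a gain $|Q_i|^{\beta_i/d_i}$ per dyadic scale that is summed geometrically; in a coordinate with $p_i>q_i$ one invokes the $\dot L^{r_i}$ bound together with a Carleson/sparse packing argument as in \cite{Hy5}; in a coordinate with $p_i=q_i$ one uses the one-parameter, or in the doubly-critical case the product, John--Nirenberg inequality. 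The order in which the two coordinates are processed is exactly what produces the asymmetric space $\dot C^{0,\beta_2}_{x_2}(\BMO_{x_1})$ in the regime $p_1=q_1$, $p_2<q_2$: the $x_1$-sum yields only a $\BMO$ constant with no scale gain and must therefore be carried out first, after which the H\"older gain in $x_2$ has to absorb this constant uniformly over the $x_2$-scales, forcing the H\"older space to be the outer one.

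\textbf{Lower bounds.} For necessity I would run the awf over rectangles. Fixing $R=I_1\times I_2$, the symmetric non-degeneracy of $T_i$ produces a cube $\wt{I_i}$ comparable to and well separated from $I_i$ on which the kernel $K_i$ is of one sign and of size $\sim|I_i|^{-1}$; choosing $f_i,g_i$ adapted to $\wt{I_i}$ and $I_i$ then gives an approximate factorization of $g_1g_2\mathbf 1_R$ in terms of products of $T_if_i$ and $g_i$ up to error terms of strictly smaller order. Testing $[T_1,[b,T_2]]$ against these functions reproduces $\langle b-\langle b\rangle_R,\varphi_R\rangle$ for an admissible bump $\varphi_R$ adapted to $R$; running this over a suitable lattice of rectangles and either testing pointwise (for the H\"older-type targets) or dualizing against $L^{r'}$ (for the $\dot L^r$-type targets) yields the stated lower bounds. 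In the regimes $p_i>q_i$ the factorization only reconstructs the vector-valued $\dot L^r$ norm in the ``wrong'' order, whence only one-sided converses are claimed there, and in the doubly-critical case the awf sees one rectangle at a time and hence reproduces only rectangular, not product, $\BMO$.

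\textbf{Main obstacle.} The hard part is the bi-parameter awf together with the attendant error bookkeeping. Already in one parameter the factorization $gh\approx T^*\phi\cdot\psi-\phi\cdot T\psi$ must be iterated to absorb lower-order errors; in two parameters this iteration has to be carried out in each variable with the two scales kept independent, and one must verify that all error terms remain summable in the mixed norm and that the test functions so produced are genuinely admissible for the relevant duality. Pinning down precisely which vector-valued space survives the two-step H\"older argument on the upper side (the ``switched order'' phenomenon) is the other delicate point; by comparison the dyadic reduction and the individual paraproduct estimates are essentially routine given \cite{Hy5} and the bi-parameter representation theorem.
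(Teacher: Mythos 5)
Your lower-bound half matches the paper's strategy well: oscillatory characterizations of the target spaces, a bi-parameter approximate weak factorization of a dualizing function with mean zero in each variable over $R=I_1\times I_2$ (producing error terms on $\wt I_1\times I_2$, $I_1\times\wt I_2$, $\wt R$ that are absorbed by iterating over the four rectangles), off-support testing constants, and sparse dualization for the $\dot L^r$ targets. Your remarks that the $p_i>q_i$ regimes and the doubly-critical case only yield one-sided or rectangular-$\BMO$ conclusions are exactly what the paper obtains.

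Your upper-bound half, however, takes a genuinely different route from the paper, and as written it is the weak point. The paper does \emph{not} run the dyadic representation theorem in the off-diagonal regimes. Instead it uses direct kernel representations of the pairing $\langle [T_1,[b,T_2]]f,g\rangle$ (justified via truncations and Cotlar's inequality): when $p_i<q_i$ in both variables one bounds the full kernel by $\|b\|_{\dot C^{0,\beta_1}(\dot C^{0,\beta_2})}$ times a product of fractional integral kernels; when the gain is in one variable only, one peels off that variable's kernel and controls the remaining one-parameter commutator $[b(x_i)-b(y_i),T_j]$ by known one-parameter results (sparse domination for the $\BMO$ case, the elementary H\"older bound $\|[b,T_2]f\|_{L^{q_2}}\lesssim\|b\|_{\dot L^{r_2}}\|f\|_{L^{p_2}}$ for the $\dot L^r$ case); the $p_1=q_1$, $p_2>q_2$ case uses a vector-valued sharp maximal function and Fefferman--Stein; and the $p_1>q_1$ columns are handled by writing $b-c_2$ and plain H\"older, with no cancellation at all. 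The dyadic representation and paraproduct machinery appear only for the diagonal $\BMO_{\rm prod}$ bound (which is essentially known). If you insist on the dyadic route everywhere, you owe a proof that each paraproduct piece -- including the non-cancellative $A_3$-type terms, which are only controlled through the commutator cancellation -- gains the factor $\ell(I_i)^{\beta_i}$ in a coordinate with $p_i<q_i$, and that the resulting positive dyadic operators map $L^{p_1}_{x_1}L^{p_2}_{x_2}\to L^{q_1}_{x_1}L^{q_2}_{x_2}$; the shifts themselves are not smoothing, so all of this must come from $b$, and none of these estimates is supplied in your sketch. Your heuristic for the switched-order space $\dot C^{0,\beta_2}_{x_2}(\BMO_{x_1})$ is in the right spirit, but the actual mechanism is simpler: the kernel representation is performed in the H\"older variable $x_2$, and the inner commutator in $x_1$ is bounded with constant $\|b(\cdot,x_2)-b(\cdot,y_2)\|_{\BMO_{x_1}}\lesssim\|b\|_{\dot C^{0,\beta_2}_{x_2}(\BMO_{x_1})}|x_2-y_2|^{\beta_2}$, after which $|x_2-y_2|^{\beta_2}|K_2(x_2,y_2)|$ is a fractional integral kernel.
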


It is important to realize that commutator upper bounds are expected for all bounded singular integrals, while the lower bounds obviously require
some non-degeneracy. All our lower bounds hold under weak non-degeneracy assumptions on the operators, and neither the homogeneity nor the translation-invariance of the operators is needed. However, they have been crucial in the diagonal case \cite{FL, LPPW}.

There has been a vast amount of other recent activity regarding both one-parameter and multi-parameter commutators.
For example, the two-weight commutator estimates, which include
\cite{HLW, HLW2, HPW, Hy5, LOR1, LOR2, LMV:Bloom, LMV:BloomProdBMO}, have been one of the main lines of development.
Commutators are also actively studied in other settings: see for example \cite{DLOPW} for the flag setting, \cite{DLOPW2} for the Zygmund dilation setting and \cite{DLOWY} for the Bessel setting.

The motivation for this paper is not just the results in Theorem \ref{thm:main} but also the associated methodology. We develop some interesting bi-parameter versions of the flexible one-parameter commutator methods \cite{Hy5}.
In more detail, our strategy requires us to develop new bi-parameter methods that exploit the following known interplay between commutator bounds and weak factorizations.
The logic of this connection is that while Nehari's theorem \cite{Ne} may be seen as a corollary of the classical factorization of $H^1(\D)$, Coifman, Rochberg and Weiss \cite{CRW} reversed the reasoning and deduced from the commutator estimate \eqref{eq:commutatorbb}, using duality, a \emph{weak} factorization (involving sums of products, rather than just products) of the real-variable Hardy space $H^1(\R^d)$.
This connection between commutators estimates and factorizations of functions stems from identities like
\begin{equation}\label{eq:connection}
\langle [b,T]f, g\rangle = \langle b, Tf \cdot g - f \cdot T^* g\rangle.
\end{equation}
Recently, in \cite{Hy5} the idea was to reverse the reasoning again, and to first directly prove a suitable factorization, and then
use it to prove the desired commutator estimate. This leads to the approximate weak factorization method, where a function $h$ is expanded
in the form
\begin{equation}\label{eq:oneparWF}
h = \frac{h}{T^* g} T^* g =: -f \cdot T^*g = -f \cdot T^*g + Tf \cdot g + h', \qquad h' := -Tf \cdot g,
\end{equation}
and a suitable $g$ above allows to absorb the error $\|h'\| \ll \|h\|$.

Instead of \eqref{eq:connection}, in the bi-parameter setting we have
\begin{equation*}
  \langle [T_1, [b,T_2]]f, g\rangle = \langle b, T_2 f \cdot T_1^* g - f \cdot T_1^* T_2^* g - T_1 T_2 f \cdot g + T_1 f \cdot T_2^*g\rangle.
\end{equation*}
We develop our new weak factorizations in Section \ref{sec:WF}.
In Lemma \ref{lem:IteratedFactorization} we expand a function $f$ supported on a rectangle $R = I \times J$ and satisfying $\int_I f = \int_J f = 0$ using
the approximate weak factorization logic. This produces more error terms, with more complicated supports, when compared to the one-parameter analogue \eqref{eq:oneparWF}, and makes
the repeated use of the factorization more tricky. Nevertheless, we manage to use our bi-parameter approximate weak factorization
to e.g. prove Theorem \ref{thm:mixednormrec}, which is the key to many commutator lower bounds.

Here is an outline of the paper. After the short preliminaries of Section \ref{sec:prelim} we move on to the oscillatory characterizations of
various function spaces in Section \ref{sec:OscCharofSpaces}. These function space characterizations will be combined
with the weak factorizations of Section \ref{sec:WF} to prove our necessary conditions for commutator boundedness in Section \ref{sec:nes}.
Section \ref{sec:suf} collects all the sufficient conditions. Sections \ref{sec:nes} and \ref{sec:suf} combined give Theorem \ref{thm:main}.
Section \ref{sec:vecval} records some additional vector-valued estimates for commutators that are of independent interest -- see e.g. Theorem \ref{thm:vecval2par}.

%We also mention that similarly to \cite{Hy5} our eventual lower bounds are in fact stated using weaker off-support conditions (see Section \ref{sec:WF}), which
%allows transparent lower bound statements valid under the minimal assumption $b \in L^1_{\loc}$. Thus, we do not just produce the quantitative commutator lower bounds, but also
%establish the qualitative statement that $b$ is in the desired space as part of our results.

%Some of our commutator upper bounds are non-trivial also. To this end, we e.g. utilise, and also develop, vector-valued estimates for commutators. This is done in Section \ref{sec:vecval}.
%Equipped with the tools of Section \ref{sec:WF} and Section \ref{sec:vecval}, we systematically collect and prove all of our commutator upper and lower bounds in Section \ref{sec:mixedEst}.
%We start our task with some preliminaries in Section \ref{sec:prelim}.

\subsection*{Acknowledgements}
E. Airta, T. Hyt\"onen, H. Martikainen and T. Oikari  were supported by the Academy of Finland through project Nos. 294840 (Martikainen), 327271 (Airta, Martikainen), 306901 (Airta, Martikainen, Oikari), and 314829 (Hyt\"onen), and through the  Finnish Centre of Excellence in Analysis and Dynamics Research (project No. 307333, all four). Martikainen and Oikari were also supported by the three-year research grant 75160010 of the University of Helsinki.
\section{Preliminaries}\label{sec:prelim}

\subsection{Basic notation}
We denote $A \lesssim B$ if $A \le CB$ for some absolute constant $C$. The constant $C$ can at least depend on the dimensions of the appearing Euclidean spaces, on integration exponents, and on various Banach space constants. It can also depend on various other fixed constants, like those related to singular integrals, and so on.
We denote $A \sim B$ if $B \lesssim A \lesssim B$.

When we consider $\R^d$ as a bi-parameter product space $\R^d = \R^{d_1} \times \R^{d_2}$ we often denote the mixed-norm space $L^{p_1}(\R^{d_1}; L^{p_2}(\R^{d_2}))$
by $L^{p_1}_{x_1} L^{p_2}_{x_2}$ -- this is suggested by the notation $x = (x_1, x_2) \in  \R^d = \R^{d_1} \times \R^{d_2}$.
We also always identify $f \colon \R^d \to \C$ satisfying
$$
\Big(\int_{\R^{d_1}} \Big( \int_{\R^{d_2}} |f(x_1, x_2)|^{p_2} \ud x_2 \Big)^{p_1/p_2} \ud x_1\Big)^{1/p_1} < \infty
$$
with the function $\phi_f \in L^{p_1}(\R^{d_1}; L^{p_2}(\R^{d_2}))$, $\phi_f(x_1) = f(x_1, \cdot)$.

Often integral pairings -- denoted with the bracked notation $\langle f, g \rangle = \int fg$ -- need to be taken with respect to one of the variables only.
For example, if $f \colon \R^{d} \to \C$ and $h \colon \R^{d_1} \to \C$, then $\langle f, h \rangle_1 \colon \R^{d_2} \to \C$ is defined by
$$
\langle f, h \rangle_1(x_2) = \int_{\R^{d_1}} f(y_1, x_2)h(y_1)\ud y_1.
$$
We denote averages by
$$
\langle f \rangle_A = \fint_A f := \frac{1}{|A|} \int_A f,
$$
where $|A|$ is the Lebesgue measure of the set $A$. The indicatator function of a set $A$ is denoted by $1_A$.
We try to denote cubes in $\R^{d_i}$ by $I_i, J_i, L_i$ and so on --  that is, the dimension of the cube can be read from the subscript. Various rectangles then take the form $I_1 \times I_2$, $J_1 \times J_2$, etc. The side length of a cube $I_i$ is denoted by $\ell(I_i)$.

\subsection{Singular integrals and commutators}
We call $$K_i \colon \R^{d_i} \times \R^{d_i} \setminus \{(x_i,y_i) \in \R^{d_i} \times \R^{d_i} \colon x_i =y_i \} \to \C$$ a standard Calder\'on-Zygmund kernel on $\R^{d_i}$
if we have
$$
|K(x_i,y_i)| \le \frac{C}{|x_i-y_i|^{d_i}}
$$
and, for some $\alpha_i \in (0,1]$, we have
\begin{equation}\label{eq:holcon}
|K(x_i, y_i) - K(x_i', y_i)| + |K(y_i, x_i) - K(y_i, x_i')| \le C\frac{|x_i-x_i'|^{\alpha}}{|x_i-y_i|^{d_i+\alpha_i}}
\end{equation}
whenever $|x_i-x_i'| \le |x_i-y_i|/2$. 
%The best constant in these estimates is denoted by $\|K\|_{\textup{CZ}_{\alpha}}$ -- however,
%we almost never track the dependence on this constant.

Many of our results hold with \eqref{eq:holcon} replaced by a significantly weaker assumption -- see Remark
\ref{rem:modcon}. However, we do not emphasise this too much as it is not a novelty of this paper (see e.g. \cite{Hy5}).

A singular integral operator (SIO) is a linear operator $T_i$ on $\R^{d_i}$ (initially defined, for example, on bounded and compactly supported functions) so that there is a standard kernel
$K_i$ for which
$$
\langle T_if ,g \rangle = \iint_{\R^{d_i} \times \R^{d_i}} K_i(x_i,y_i) f(y_i) g(x_i) \ud y_i \ud x_i
$$
whenever the functions $f$ and $g$ are nice and have disjoint supports. A Calder\'on--Zygmund operator (CZO) is an SIO $T_i$, which is bounded
from $L^p(\R^{d_i}) \to L^p(\R^{d_i})$ for all (equivalently for some) $p \in (1,\infty)$. The $T1$ theorem \cite{DJ} says that an SIO is a CZO if and only if
$$
\int_{I_i} |T_i1_{I_i}| + \int_{I_i} |T_i^*1_{I_i}| \lesssim |I_i|
$$
for all cubes $I_i \subset \R^{d_i}$. Here $T_i^*$ is the linear adjoint of $T_i$. We know a lot about the structure of a CZO $T_i$:
we can represent $T_i$ with certain dyadic model operators (DMOs) -- see \cite{Hy, Hy2}.
We will have use for this later.

If $b \in L^s_{\loc}(\R^d)$ for some $s \in (1,\infty)$ and $T$ is a CZO on $\R^d$, then the pairing
$$
\langle [b,T]f, g\rangle = \langle Tf, bg\rangle - \langle T(bf), g\rangle
$$
is well-defined for $f, g \in L^{\infty}_c$. If $\R^d = \R^{d_1} \times \R^{d_2}$ and $T_i$ is a CZO on $\R^{d_i}$, we can similarly
define $\langle [T_1, [b,T_2]]f, g\rangle$. Then we can ask, via duality, if $[T_1, [b,T_2]]$ maps $L^{p_1}_{x_1} L^{p_2}_{x_2} \to L^{q_1}_{x_1} L^{q_2}_{x_2}$.
However, when a commutator lower bound is proved, the full norm $$\|[T_1, [b,T_2]]\|_{L^{p_1}_{x_1} L^{p_2}_{x_2} \to L^{q_1}_{x_1} L^{q_2}_{x_2}}$$ is not actually needed. We will define so-called off-support versions of the norm, which can be defined even if we only have $b \in L^1_{\loc}$.
In fact, these off-support constants depend only on the kernels $K_1$ and $K_2$ and not on the CZOs themselves.

When we are given a CZO $T_i$ we always write $K_i$ for its kernel without explicit mention.

\section{Function spaces and oscillatory characterizations}\label{sec:OscCharofSpaces}

In our goal of linking the boundedness of the commutator $[T_1,[b,T_2]]$ with the membership of $b$ in a suitable function space, a useful intermediate notion is provided by various oscillatory characterizations of the different function space norms. In this section we specifically go through all the oscillatory conditions that appear in our
commutator \emph{lower} bounds.

\subsection{One-parameter spaces}
We begin by recalling the relevant characterizations in the one-parameter situation, as this will motivate the necessarily more complicated expressions in the two parameter case.

The case of $\BMO$ is most immediate, as the norm is directly given by the oscillatory quantity
\begin{equation*}
  \Norm{b}{\BMO(\R^d)}:=\sup_I\fint_I\abs{b-\ave{b}_I}.
\end{equation*}
For the homogeneous H\"older norms
$$
\Norm{b}{\dot C^{0,\alpha}(\R^d)} := \sup_{x \ne y} \frac{|b(x) - b(y)|}{|x-y|^{\alpha}},
$$
there is a well-known analogous equivalent norm:
\begin{prop}\label{prop:CalphaOsc}
We have
\begin{equation*}
  \Norm{b}{\dot C^{0,\alpha}(\R^d)}\sim\sup_I \frac{1}{\ell(I)^\alpha} \fint_I\abs{b-\ave{b}_I},
\end{equation*}
where the supremum is over all cubes $I \subset \R^d$.
\end{prop}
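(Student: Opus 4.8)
I would write $[b]_\alpha := \sup_I \ell(I)^{-\alpha}\fint_I \abs{b - \ave{b}_I}$ for the right-hand side and prove the two estimates $[b]_\alpha \lesssim \Norm{b}{\dot C^{0,\alpha}(\R^d)}$ and $\Norm{b}{\dot C^{0,\alpha}(\R^d)} \lesssim [b]_\alpha$ separately. The first is immediate: for a cube $I$ and $x \in I$ one has
\[
\abs{b(x) - \ave{b}_I} \le \fint_I \abs{b(x) - b(y)}\ud y \le \Norm{b}{\dot C^{0,\alpha}}\fint_I \abs{x-y}^\alpha \ud y \le \Norm{b}{\dot C^{0,\alpha}}(\diam I)^\alpha,
\]
and since $\diam I = \sqrt d\,\ell(I)$, averaging in $x$ over $I$ and taking the supremum gives $[b]_\alpha \le d^{\alpha/2}\Norm{b}{\dot C^{0,\alpha}}$.

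The reverse inequality is the substantive part, and I would obtain it by a telescoping argument over dyadic chains. Fix two distinct Lebesgue points $x,y$ of $b$ and let $I_0$ be the smallest axis-parallel cube containing both, so that its side length satisfies $\ell(I_0)\sim\abs{x-y}$. Passing repeatedly to the dyadic child containing $x$, build cubes $I_0 \supset I_1 \supset I_2 \supset \cdots$ with $x \in I_k$ and $\ell(I_{k+1}) = \ell(I_k)/2$; build an analogous chain $I_0 \supset I_1' \supset \cdots$ shrinking to $y$. For consecutive cubes of either chain,
\[
\abs{\ave{b}_{I_{k+1}} - \ave{b}_{I_k}} \le \fint_{I_{k+1}}\abs{b - \ave{b}_{I_k}} \le 2^d \fint_{I_k}\abs{b - \ave{b}_{I_k}} \le 2^d [b]_\alpha \ell(I_k)^\alpha = 2^{d}2^{-k\alpha}[b]_\alpha\,\ell(I_0)^\alpha,
\]
so summing the geometric series yields $\abs{\ave{b}_{I_k} - \ave{b}_{I_0}} \le \frac{2^d}{1 - 2^{-\alpha}}[b]_\alpha \ell(I_0)^\alpha$ for every $k$, and likewise along the $y$-chain. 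Since the $I_k$ shrink to $x$ with bounded eccentricity (as $\abs{I_k}\sim (\diam I_k)^d$), the Lebesgue differentiation theorem gives $\ave{b}_{I_k}\to b(x)$; hence $\abs{b(x) - \ave{b}_{I_0}}\lesssim [b]_\alpha\ell(I_0)^\alpha$ and $\abs{b(y)-\ave{b}_{I_0}}\lesssim[b]_\alpha\ell(I_0)^\alpha$, and the triangle inequality gives $\abs{b(x)-b(y)}\lesssim [b]_\alpha\ell(I_0)^\alpha\sim [b]_\alpha\abs{x-y}^\alpha$.

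The only delicate points, and where I expect the (modest) real work to lie, are all in this second direction: arranging that a single cube $I_0$ with $\ell(I_0)\sim\abs{x-y}$ serves both chains; justifying $\ave{b}_{I_k}\to b(x)$ via Lebesgue differentiation along shrinking cubes rather than balls; and the standard caveat that finiteness of $[b]_\alpha$ only controls the H\"older seminorm of the \emph{continuous representative} of $b$, so the pointwise H\"older inequality is first established at Lebesgue points and then extended by density and continuity. Everything else is routine geometry of dyadic cubes together with the geometric series.
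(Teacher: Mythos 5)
Your proof is correct and follows the same telescoping strategy as the paper: both the easy direction (averaging the pointwise H\"older bound over the cube) and the substantive direction (telescoping averages along a nested chain of cubes shrinking to each point, summing a geometric series in $2^{-k\alpha}$, and invoking Lebesgue differentiation) coincide in essence with what the paper does. The only cosmetic difference is that you telescope along dyadic children of a single cube $I_0$ containing both $x$ and $y$, whereas the paper telescopes along explicit cubes $Q_k(x)$, $Q_k(y)$ centered on the segment joining $x$ and $y$ and sharing the same initial cube $Q_1$; the Lebesgue-point/continuous-representative caveat you flag is equally (if tacitly) present in the paper's argument.
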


\begin{proof}
``$\gtrsim$'' is immediate, since $\abs{b(x)-\ave{b}_I}\leq\fint_I\abs{b(x)-b(y)}\ud y$, where $\abs{b(x)-b(y)}\lesssim\Norm{b}{\dot C^{0,\alpha}(\R^d)}\ell(I)^\alpha$ for all $x,y\in I$.

For ``$\lesssim$'', denote the right-hand side of the claim by $N$, and fix $x\neq y\in\R^d$. Define $x_k:=(1-2^{-k})x+2^{-k}y$ and $y_k:=(1-2^{-k})y+2^{-k}x$, and note that $x_1=y_1=\frac12(x+y)$.
If $Q_k(x):=Q(x_k,2^{-k}\abs{x-y})$ is the cube of centre $x_k$ and side-length $2\cdot 2^{-k}\abs{x-y}$, we easily check that $Q_{k+1}(x)\subset Q_k(x)$ and
$Q_k(x) \subset B(x, C2^{-k}|x-y|)$.
Thus, we have
\begin{equation*}
  b(x)
  =\sum_{k=1}^\infty\Big(\fint_{Q_{k+1}(x)}b-\fint_{Q_k(x)}b\Big)+\fint_{Q(\frac12(x+y),\frac12\abs{x-y})}b
\end{equation*}
and hence
\begin{equation*}
\begin{split}
  \Babs{b(x)-\fint_{Q(\frac12(x+y),\frac12\abs{x-y})}b}
  &\leq\sum_{k=1}^\infty\fint_{Q_{k+1}(x)}\Babs{b-\ave{b}_{Q_k(x)}}
    \lesssim\sum_{k=1}^\infty\fint_{Q_{k}(x)}\Babs{b-\ave{b}_{Q_k(x)}} \\
  &\lesssim N\sum_{k=1}^\infty\ell(Q_k(x))^\alpha
  \lesssim N\abs{x-y}^\alpha\sum_{k=1}^\infty 2^{-k\alpha}
  \lesssim N\abs{x-y}^\alpha.
\end{split}
\end{equation*}
A similar bound for $y$ in place of $x$ and the triangle inequality show that $\abs{b(x)-b(y)}\lesssim N\abs{x-y}^\alpha$.
\end{proof}

The final space of interest in the one-parameter case is
$$
\Norm{b}{\dot L^{r}(\R^d)} := \inf_c \Norm{b-c}{L^r},
$$
where the infimum is taken over all constants.
For this space we have the following characterisation -- similar estimates already appeared in \cite{Hy5}, but we single them out here
as a separate proposition. A collection of cubes $\mathscr S$ is called $\gamma$-sparse if there are
pairwise disjoint subsets $E(S)\subset S$, $S \in \mathscr S$, with $\abs{E(S)} \ge \gamma\abs{S}$. We can often work with $\gamma = 1/2$ and simply talk about sparseness.
\begin{prop}\label{prop:LrOsc}
For $r \in (1,\infty)$ we have
\begin{equation*}
\begin{split}
  \Norm{b}{\dot L^{r}(\R^d)} &\sim\sup_Q\Norm{b-\ave{b}_Q}{L^r(Q)}\sim\Norm{M^{\#}b}{L^r(\R^d)} \\
  &\sim \sup\Big\{\sum_{S\in\mathscr S}\lambda_S\int_S\abs{b-\ave{b}_S}:\mathscr S \text{ is sparse},\sum_{S\in\mathscr S}\abs{S}\lambda_S^{r'}\leq 1\Big\},
\end{split}
\end{equation*}
where $M^{\#}b$ is the sharp maximal function $M^{\#}b(x) = \sup_{Q \ni x} \fint_{Q} |b - \ave{b}_Q|$ and the supremum is taken over
all cubes $Q \subset \R^d$.
\end{prop}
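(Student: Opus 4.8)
Write $N_1:=\Norm{b}{\dot L^{r}(\R^d)}$, $N_2:=\sup_Q\Norm{b-\ave{b}_Q}{L^r(Q)}$, $N_3:=\Norm{M^{\#}b}{L^r(\R^d)}$ and let $N_4$ denote the sparse supremum on the right-hand side. I would prove the cycle $N_1\gtrsim N_3\gtrsim N_4\gtrsim N_2\gtrsim N_1$; one may assume $N_1<\infty$ (hence $b\in L^r_{\loc}$), as otherwise there is nothing to show. For $N_1\gtrsim N_3$: for any $c\in\C$ and any cube $Q\ni x$ one has $\fint_Q\abs{b-\ave{b}_Q}\leq 2\fint_Q\abs{b-c}\leq 2M(b-c)(x)$, so $M^{\#}b\leq 2M(b-c)$ pointwise, and the $L^r$-boundedness of the Hardy--Littlewood maximal operator ($r>1$) together with taking the infimum over $c$ gives $N_3\lesssim N_1$. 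For $N_3\gtrsim N_4$: if $\mathscr S$ is sparse with disjoint sets $E(S)\subset S$, $\abs{E(S)}\geq\frac12\abs{S}$, and $\sum_S\abs{S}\lambda_S^{r'}\leq 1$, then $\int_S\abs{b-\ave{b}_S}=\abs{S}\fint_S\abs{b-\ave{b}_S}\leq\abs{S}\inf_S M^{\#}b\leq 2\int_{E(S)}M^{\#}b$, hence $\sum_S\lambda_S\int_S\abs{b-\ave{b}_S}\leq 2\int\bigl(\sum_S\lambda_S1_{E(S)}\bigr)M^{\#}b\leq 2\Norm{\sum_S\lambda_S1_{E(S)}}{L^{r'}}\Norm{M^{\#}b}{L^r}$ by Hölder, while $\Norm{\sum_S\lambda_S1_{E(S)}}{L^{r'}}^{r'}=\sum_S\lambda_S^{r'}\abs{E(S)}\leq 1$ by disjointness; thus $N_4\lesssim N_3$.

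The crux, and the step I expect to be the main obstacle, is $N_4\gtrsim N_2$. Fix a cube $Q_0$. By $L^r$--$L^{r'}$ duality, $\Norm{b-\ave{b}_{Q_0}}{L^r(Q_0)}=\sup\{|\int_{Q_0}(b-\ave{b}_{Q_0})g|:\Norm{g}{L^{r'}(Q_0)}\leq 1\}$, and since $b-\ave{b}_{Q_0}$ has vanishing mean on $Q_0$ we may replace $g$ by $g-\ave{g}_{Q_0}$, i.e.\ assume $\ave{g}_{Q_0}=0$ (losing a factor $2$), and by density that $g$ is bounded. The key input is the sparse domination
\begin{equation*}
  \Babs{\int_{Q_0}bg}\lesssim\sum_{S\in\mathscr S}\ave{|g|}_S\int_S\abs{b-\ave{b}_S}
\end{equation*}
for some $\frac12$-sparse family $\mathscr S$ contained in the collection $\calD(Q_0)$ of dyadic subcubes of $Q_0$; this follows from the standard stopping-time argument --- expand $g=\sum_{Q\in\calD(Q_0)}\Delta_Q g$ into dyadic martingale differences, use $\int b\,\Delta_Q g=\int(b-\ave{b}_Q)\Delta_Q g$, and group the cubes $Q$ according to the stopping cubes on which $\ave{|g|}$ roughly doubles --- and is essentially the one-parameter estimate already present in \cite{Hy5}. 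Granting it, put $\lambda_S:=\ave{|g|}_S/\Lambda$ with $\Lambda:=\bigl(\sum_S\abs{S}\ave{|g|}_S^{r'}\bigr)^{1/r'}$, so that $\sum_S\abs{S}\lambda_S^{r'}=1$ and the right-hand side equals $\Lambda\sum_S\lambda_S\int_S\abs{b-\ave{b}_S}\leq\Lambda N_4$; moreover, by sparseness and the $L^{r'}$-boundedness of the dyadic maximal operator localized to $Q_0$, $\Lambda^{r'}=\sum_S\abs{S}\ave{|g|}_S^{r'}\leq 2\sum_S\int_{E(S)}(M_{\calD(Q_0)}|g|)^{r'}\lesssim\Norm{g}{L^{r'}(Q_0)}^{r'}\leq 1$. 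Hence $|\int_{Q_0}bg|\lesssim N_4$ for every admissible $g$, whence $\Norm{b-\ave{b}_{Q_0}}{L^r(Q_0)}\lesssim N_4$ and, taking the supremum over $Q_0$, $N_2\lesssim N_4$.

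It remains to close the cycle with $N_2\gtrsim N_1$, which is elementary. Take the exhausting cubes $Q_k:=[-2^k,2^k]^d$ and set $c_k:=\ave{b}_{Q_k}$. Then $\abs{c_{k+1}-c_k}=\abs{\ave{b-\ave{b}_{Q_{k+1}}}_{Q_k}}\leq\abs{Q_k}^{-1/r}\Norm{b-\ave{b}_{Q_{k+1}}}{L^r(Q_k)}\leq\abs{Q_k}^{-1/r}N_2$ by Hölder and $Q_k\subset Q_{k+1}$, and since $\abs{Q_k}^{-1/r}$ is summable, $c_k\to c$ for some $c\in\C$ with $\abs{c_k-c}\lesssim\abs{Q_k}^{-1/r}N_2$. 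Consequently $\Norm{b-c}{L^r(Q_k)}\leq\Norm{b-\ave{b}_{Q_k}}{L^r(Q_k)}+\abs{Q_k}^{1/r}\abs{c_k-c}\lesssim N_2$ uniformly in $k$, and monotone convergence yields $N_1\leq\Norm{b-c}{L^r(\R^d)}\lesssim N_2$. (The trivial bound $N_1\gtrsim N_2$, obtained from $\Norm{b-\ave{b}_Q}{L^r(Q)}\leq 2\Norm{b-c}{L^r(Q)}\leq 2\Norm{b-c}{L^r}$ and taking the infimum over $c$, is of course also available.) Apart from the sparse domination invoked for $N_4\gtrsim N_2$, every step is a direct application of the maximal theorem, of Hölder's inequality together with the sparseness condition, or of this elementary convergence argument.
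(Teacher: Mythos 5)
Your proof is correct and covers the same cyclic chain of inequalities as the paper, just written in the reverse orientation: you establish $N_1\gtrsim N_3\gtrsim N_4\gtrsim N_2\gtrsim N_1$, while the paper proves $N_1\lesssim N_2\lesssim N_4\lesssim N_3\lesssim N_1$. The steps $N_3\lesssim N_1$ (boundedness of $M$), $N_4\lesssim N_3$ (sparseness plus H\"older) and $N_1\lesssim N_2$ (exhaustion, Cauchy, Fatou/monotone convergence) are essentially identical to the paper's.

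The one place you take a genuinely different route is the hard step $N_2\lesssim N_4$. The paper applies the pointwise Lerner oscillation formula $1_Q\abs{b-\ave{b}_Q}\lesssim\sum_{S\in\mathscr S}1_S\fint_S\abs{b-\ave{b}_S}$ with $\mathscr S$ adapted to $b$ (citing \cite[Lemma 3.4]{Hy5}), then takes $L^r(Q)$-norms and dualizes the resulting $\ell^r$-expression to produce the coefficients $\lambda_S$. You instead dualize $\Norm{b-\ave{b}_{Q_0}}{L^r(Q_0)}$ against $g\in L^{r'}(Q_0)$ first, and invoke a bilinear sparse domination $\abs{\int_{Q_0}bg}\lesssim\sum_{S\in\mathscr S}\ave{\abs{g}}_S\int_S\abs{b-\ave{b}_S}$ with $\mathscr S$ adapted to $g$ via a stopping time. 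The two are closely related and both standard: the pointwise formula implies yours by multiplying by $\abs{g}$ and integrating, while your direct martingale construction bypasses the pointwise estimate. There is a small sketch-level slip in your description of the stopping argument: in $\int b\,\Delta_Q g=\int(b-c)\Delta_Q g$ the correct choice is $c=\ave{b}_S$ for the stopping \emph{ancestor} $S$ of $Q$, not $c=\ave{b}_Q$. With this choice the blocks $P_Sg:=\sum_{Q\to S}\Delta_Q g$ satisfy $\Norm{P_S g}{L^\infty}\lesssim\ave{\abs{g}}_S$ and hence $\abs{\int(b-\ave{b}_S)P_S g}\leq\ave{\abs{g}}_S\int_S\abs{b-\ave{b}_S}$, which is exactly what you want; with $c=\ave{b}_Q$ the oscillations on the many nested $Q\to S$ do not telescope to a single $\int_S\abs{b-\ave{b}_S}$ and the sum over $Q$ is not controllable. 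This is a presentation slip rather than a gap, since the bilinear sparse bound you invoke is correct as stated.
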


\begin{proof}
We prove a chain of upper bounds both starting and finishing with $\Norm{b}{\dot L^r(\R^d)}$, and covering all other expressions as intermediate steps.
To begin with, we have
\begin{equation*}
  \Norm{b}{\dot L^{r}(\R^d)}\lesssim\sup_Q\Norm{b-\ave{b}_Q}{L^r(Q)} =: N.
\end{equation*}
To see this, fix an increasing sequence of cubes $(Q_i)_i$ that exhaust $\R^d$. Let $i \le j$ and write
\begin{align*}
|\ave{b}_{Q_j} - \ave{b}_{Q_i}| &= \Big( \fint_{Q_i} | [\ave{b}_{Q_j} - b(x)] + [b(x) - \ave{b}_{Q_i}] |^r \ud x \Big)^{1/r} \\
&\le |Q_i|^{-1/r} ( \| b-\ave{b}_{Q_j}\|_{L^r(Q_i)} + \| b-\ave{b}_{Q_i}\|_{L^r(Q_i)}) \\
&\le |Q_i|^{-1/r}  ( \| b-\ave{b}_{Q_j}\|_{L^r(Q_j)} + \| b-\ave{b}_{Q_i}\|_{L^r(Q_i)})
\le 2|Q_i|^{-1/r} N.
\end{align*}
Thus, $(\ave{b}_{Q_j})_j$ is a Cauchy sequence and the limit $c := \lim_{j \to \infty} \ave{b}_{Q_j}$ exists. Fatou's lemma yields the desired estimate:
$$
\Norm{b}{\dot L^{r}(\R^d)}^r \le \int_{\R^d} |b-c|^r = \int \lim_{j \to \infty} 1_{Q_j} |b-\ave{b}_{Q_j}|^r
\le \liminf_{j \to \infty} \int_{Q_j} |b-\ave{b}_{Q_j}|^r \le N^r.
$$

For a fixed cube $Q \subset \R^d$, we have
\begin{equation*}
  1_Q\abs{b-\ave{b}_Q}\lesssim\sum_{S\in\mathscr S}1_S\fint_S\abs{b-\ave{b}_S}
\end{equation*}
for a suitable sparse subcollection $\mathscr S\subset\mathscr D(Q)$. Here $\mathscr D(Q)$ consists of the dyadic subcubes of $Q$ obtained by dividing $Q$ in the natural way.
 For this elementary variant of Lerner's oscillation formula see e.g. \cite[Lemma 3.4]{Hy5}. Using this we now have
\begin{equation*}
  \Norm{b-\ave{b}_Q}{L^r(Q)}
  \lesssim\BNorm{\sum_{S\in\mathscr S}1_S\fint_S\abs{b-\ave{b}_S}}{L^r(\R^d)}
  \lesssim\Big(\sum_{S\in\mathscr S}\abs{S}\Big[\fint_S\abs{b-\ave{b}_S}\Big]^r\Big)^{1/r},
\end{equation*}
where the last step is easily verified by dualising with $\phi\in L^{r'}$ and using the definition of sparseness:
\begin{equation}\label{eq:basicSparse}
\begin{split}
  \int\Big(\sum_{S\in\mathscr S} 1_S c_S\Big)\phi
  &\lesssim \sum_{S\in\mathscr S} c_S \int_{E_S} \ave{\phi}_S
  \leq\int\Big(\sum_{S\in\mathscr S} 1_{E(S)}c_S\Big)M\phi \\
  &\leq\Big\|\sum_{S\in\mathscr S} 1_{E(S)}c_S\Big\|_{L^{r}}\Norm{M\phi}{L^{r'}}
  \lesssim\Big(\sum_{S\in\mathscr S}\abs{E(S)} c_S^r\Big)^{1/r}\Norm{\phi}{L^{r'}}.
\end{split}
\end{equation}
Dualizing the $\ell^r$ norm with $\ell^{r'}$, we find that
\begin{equation*}
  \Big(\sum_{S\in\mathscr S}\Big[\abs{S}^{1/r} \fint_S\abs{b-\ave{b}_S}\Big]^r\Big)^{1/r}
  =\sum_{S\in\mathscr S}\wt \lambda_S \abs{S}^{1/r}\fint_S\abs{b-\ave{b}_S}
  =\sum_{S\in\mathscr S}\lambda_S\int_S\abs{b-\ave{b}_S}
\end{equation*}
for suitable coefficients $\lambda_S$ with $\sum_{S\in\mathscr S}\abs{S}\lambda_S^{r'} = \sum_{S\in\mathscr S}\wt \lambda_S^{r'} \le 1$.

Next, using sparseness and the definition of the sharp maximal operator, we observe that
given an arbitrary sparse collection $\mathscr S$ and coefficients $\lambda_S$ with $\sum_{S\in\mathscr S}\abs{S}\lambda_S^{r'} \le 1$, we have
\begin{equation*}
\begin{split}
  \sum_{S\in\mathscr S}\abs{S}\lambda_S\fint_S\abs{b-\ave{b}_S}
  &\lesssim\sum_{S\in\mathscr S}\abs{E(S)}\lambda_S\inf_{z\in S}M^{\#}b(z)
  \leq\int\Big(\sum_{S\in\mathscr S} 1_{E(S)}\lambda_S\Big)M^{\#}b \\
  &\leq\BNorm{\sum_{S\in\mathscr S}1_{E(S)}\lambda_S}{L^{r'}}\Norm{M^{\#}b}{L^r}
  =\Big(\sum_{S\in\mathscr S}\abs{E(S)}\lambda_S^{r'}\Big)^{1/r'}\Norm{M^{\#}b}{L^r},
\end{split}
\end{equation*}
where the first factor is bounded by $1$. Lastly, we have
\begin{equation*}
  \Norm{M^{\#}b}{L^r}=\Norm{M^{\#}(b-c)}{L^r}\lesssim\Norm{b-c}{L^r}
\end{equation*}
for every constant $c$, and hence $\Norm{M^{\#}b}{L^r}\lesssim \Norm{b}{\dot L^r}$.
\end{proof}

\subsection{Bi-parameter spaces}
For $b \in L^1_{\loc}$ and a rectangle $R = I_1 \times I_2$ we denote
\begin{equation}\label{eq:osc}
  \osc^{v_1,v_2}(b,R)
  :=\Norm{b-\langle b\rangle_{I_1,1} -\langle b\rangle_{I_2,2} +\langle b\rangle_R}{L^{v_1}_{x_1}L^{v_2}_{x_2}(R)}, \qquad 1 \le v_i \le \infty.
\end{equation}

\subsubsection*{Homogeneous H\"older spaces}\label{subsec:bihol}
If $X$ is a Banach space with norm $|\cdot|_X$ and $b \colon \R^d \to X$, we define
$$
\|b\|_{\dot C^{0, \alpha}(X)} = \sup_{x \ne y} \frac{|b(x) - b(y)|_X}{|x-y|^{\alpha}}.
$$
In $\R^d = \R^{d_1} \times \R^{d_2}$ the natural bi-parameter homogeneous H\"older norm is
\begin{align*}
\|b\|_{\dot C^{0, \alpha, \beta}(\R^d)} &:= \|b\|_{\dot C^{0, \alpha}_{x_1}(\dot C^{0, \beta}_{x_2}) } = \|b\|_{\dot C^{0, \beta}_{x_2}(\dot C^{0, \alpha}_{x_1}) }  \\
&= \sup_{x_1 \ne y_1} \Big\|\frac{b(x_1, \cdot)- b(y_1, \cdot)}{|x_1-y_1|^{\alpha}} \Big\|_{\dot C^{0, \beta}}
= \sup_{x_2 \ne y_2} \Big\|\frac{b(\cdot, x_2)- b(\cdot, y_2)}{|x_2-y_2|^{\beta}} \Big\|_{\dot C^{0, \alpha}} \\
&=
 \sup_{\substack{x_1\neq y_1\\ x_2\neq y_2}}
\frac{|b(x_1, x_2)-b(x_1, y_2)-b(y_1,x_2)+b(y_1, y_2)|}{|x_1-y_1|^{\alpha}|x_2-y_2|^{\beta}}.
\end{align*}
The following oscillatory characterization holds for this norm.
\begin{prop}\label{prop:calphacbeta}
We have
\begin{equation*}
\begin{split}
\|b\|_{\dot C^{0, \alpha, \beta}(\R^d)}
  \sim\sup\Big\{ \frac{1}{\ell(I_1)^\alpha\ell(I_2)^\beta} \frac{\osc^{1,1}(b, R)}{|R|} \colon R = I_1 \times I_2 \textup{ rectangle}
\Big\}.
\end{split}
\end{equation*}
\end{prop}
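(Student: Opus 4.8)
Throughout write $O_R(b):=b-\ave{b}_{I_1,1}-\ave{b}_{I_2,2}+\ave{b}_R$ for a rectangle $R=I_1\times I_2$, so that $\osc^{1,1}(b,R)=\Norm{O_R(b)}{L^1(R)}$, and let $N$ denote the supremum on the right-hand side of the asserted equivalence. The bound $N\lesssim\Norm{b}{\dot C^{0,\alpha,\beta}(\R^d)}$ is the routine direction: expanding the four averages one checks the pointwise identity
\[
O_R(b)(x_1,x_2)=\fint_{I_1}\fint_{I_2}\big[b(x_1,x_2)-b(z_1,x_2)-b(x_1,z_2)+b(z_1,z_2)\big]\ud z_2\ud z_1,\qquad (x_1,x_2)\in R,
\]
whose integrand is bounded by $\Norm{b}{\dot C^{0,\alpha,\beta}}|x_1-z_1|^\alpha|x_2-z_2|^\beta\le\Norm{b}{\dot C^{0,\alpha,\beta}}(\diam I_1)^\alpha(\diam I_2)^\beta$; hence $|O_R(b)|\lesssim\Norm{b}{\dot C^{0,\alpha,\beta}}\ell(I_1)^\alpha\ell(I_2)^\beta$ on $R$, and averaging over $R$ gives the claim.

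For the reverse bound $\Norm{b}{\dot C^{0,\alpha,\beta}}\lesssim N$, the plan is a bi-parameter version of the telescoping argument in the proof of Proposition~\ref{prop:CalphaOsc}. The crucial ingredient is an \emph{averaged} form of the hypothesis: if $R'=I_1'\times I_2'\subset R=I_1\times I_2$ (meaning $I_1'\subset I_1$ and $I_2'\subset I_2$), then Fubini gives
\[
\fint_{R'}O_R(b)=\ave{b}_{I_1'\times I_2'}-\ave{b}_{I_1\times I_2'}-\ave{b}_{I_1'\times I_2}+\ave{b}_{I_1\times I_2},
\]
a \emph{mixed} second difference of rectangle averages, and therefore
\[
\Big|\ave{b}_{I_1'\times I_2'}-\ave{b}_{I_1\times I_2'}-\ave{b}_{I_1'\times I_2}+\ave{b}_{I_1\times I_2}\Big|\le\frac{|R|}{|R'|}\cdot\frac{\osc^{1,1}(b,R)}{|R|}\le\frac{|R|}{|R'|}\,N\,\ell(I_1)^\alpha\ell(I_2)^\beta.
\]

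Now fix $x_1\neq y_1$ in $\R^{d_1}$ and $x_2\neq y_2$ in $\R^{d_2}$, and take the nested cube families $Q_k^1(x_1),Q_k^1(y_1)$ ($k\ge 1$, with $Q_1^1(x_1)=Q_1^1(y_1)$, $Q_{k+1}^1(\cdot)\subset Q_k^1(\cdot)$ and $\ell(Q_k^1(\cdot))\sim 2^{-k}|x_1-y_1|$) from the proof of Proposition~\ref{prop:CalphaOsc}, and likewise $Q_m^2(x_2),Q_m^2(y_2)$ in $\R^{d_2}$. Setting
\[
D_{k,m}:=\ave{b}_{Q_k^1(x_1)\times Q_m^2(x_2)}-\ave{b}_{Q_k^1(y_1)\times Q_m^2(x_2)}-\ave{b}_{Q_k^1(x_1)\times Q_m^2(y_2)}+\ave{b}_{Q_k^1(y_1)\times Q_m^2(y_2)},
\]
the agreement of the scale-$1$ cubes in each variable forces $D_{k,1}=D_{1,m}=0$, so a discrete double telescoping yields $D_{K,M}=\sum_{k=1}^{K-1}\sum_{m=1}^{M-1}\delta_{k,m}$ with $\delta_{k,m}:=D_{k+1,m+1}-D_{k+1,m}-D_{k,m+1}+D_{k,m}$. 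Regrouping the sixteen averages in $\delta_{k,m}$ by the four corner pairs $(z_1,z_2)\in\{x_1,y_1\}\times\{x_2,y_2\}$ exhibits it as an alternating sum of four quantities of exactly the form estimated above, each with $R=Q_k^1(z_1)\times Q_m^2(z_2)$, $R'=Q_{k+1}^1(z_1)\times Q_{m+1}^2(z_2)$ and $|R|/|R'|=2^{d_1+d_2}$; hence $|\delta_{k,m}|\lesssim N\,2^{-k\alpha}2^{-m\beta}|x_1-y_1|^\alpha|x_2-y_2|^\beta$. Summing the two geometric series gives $\sum_{k,m\ge1}|\delta_{k,m}|\lesssim N|x_1-y_1|^\alpha|x_2-y_2|^\beta$, so the double series converges absolutely and $\lim_{K,M\to\infty}D_{K,M}$ exists; iterating the Lebesgue differentiation theorem (first in $x_1$, then in $x_2$, legitimate for a.e.\ choice of the four points since $b\in L^1_{\loc}$) identifies this limit with $b(x_1,x_2)-b(y_1,x_2)-b(x_1,y_2)+b(y_1,y_2)$, and we conclude $\Norm{b}{\dot C^{0,\alpha,\beta}}\lesssim N$ as in Proposition~\ref{prop:CalphaOsc}.

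The main obstacle is organising the telescoping so that every summand stays under the control of $\osc^{1,1}$: a naive telescoping of $\ave{b}_{Q_K^1(x_1)\times Q_M^2(x_2)}$ towards $b(x_1,x_2)$ produces error terms nested in only one of the two variables, and $\osc^{1,1}$ simply does not see such one-variable oscillation. Forming the full corner combination $D_{k,m}$ — so that the common ancestor cube in each variable makes $D_{k,1}=D_{1,m}=0$ and all one-variable contributions cancel — is precisely what forces each summand to be a genuinely bi-parameter mixed difference, hence estimable by the averaged hypothesis. A secondary point is that rectangles with independently shrinking sides do not form a differentiation basis, so the passage to the mixed second difference of $b$ must be carried out by iterated one-parameter differentiation, which is justified once the absolute convergence of $\sum_{k,m}\delta_{k,m}$ guarantees that the joint limit of $D_{K,M}$ exists.
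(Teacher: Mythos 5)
Your proof is correct, but it organizes the hard direction quite differently from the paper. The paper proves $\lesssim$ asymmetrically: it first invokes the one-parameter Proposition~\ref{prop:CalphaOsc} to reduce the claim to an oscillation estimate for the one-parameter function $b_{x_2,y_2}=b(\cdot,x_2)-b(\cdot,y_2)$ on a fixed cube $I_1$, and only then performs a \emph{one}-parameter telescoping in the $x_2$-variable (with $I_1$ held fixed), bounding each resulting term by $N\ell(I_1)^\alpha\ell(Q_k(x_2))^\beta$. Your argument instead telescopes \emph{simultaneously} in both variables: you form the mixed rectangle-average differences $D_{k,m}$, use that the level-$1$ cubes coincide to kill the boundary, and write $D_{K,M}=\sum\delta_{k,m}$ by the discrete double telescoping identity, with each $\delta_{k,m}$ regrouped into four terms of the form $\fint_{R'}O_R(b)$ controlled by $N$. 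What your route buys is symmetry between the two parameters and independence from Proposition~\ref{prop:CalphaOsc}; what it costs is the extra step you correctly flag -- rectangles with independently shrinking sides are not a differentiation basis for $L^1$, so the identification of $\lim D_{K,M}$ with the mixed second difference of $b$ must go through iterated one-parameter differentiation, which your absolute-convergence observation (and the fact that each $Q_k(\cdot)$ is contained in a comparably sized ball around its target point) legitimates. The paper's route avoids this subtlety entirely by never taking a genuine bi-parameter limit: the reduction via Proposition~\ref{prop:CalphaOsc} replaces the $x_1$-differentiation by the oscillation supremum over $I_1$, so only a one-parameter differentiation in $x_2$ ever occurs. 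Both proofs ultimately rest on the same elementary estimate of the oscillation over a nested rectangle pair, so neither is shorter in substance, but yours is the more ``native'' bi-parameter argument while the paper's is the more modular one.
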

\begin{proof}
We denote the supremum on the right hand side by $N$.

``$\gtrsim$'' is immediate, since for all $x_i \in I_i$ we have
\begin{align*}
\abs{b(x_1,x_2)-&\ave{b}_{I_1,1}(x_2)-\ave{b}_{I_2,2}(x_1)+\ave{b}_{I_1\times I_2}} \\
&\le \fint_{I_1\times I_2}|b(x_1, x_2)-b(x_1,y_2)-b(y_1,x_2)+b(y_1,y_2)|\ud y_1\ud y_2 \\
&\lesssim \|b\|_{\dot C^{0, \alpha, \beta}(\R^d)} \ell(I_1)^\alpha \ell(I_2)^\beta.
\end{align*}

For ``$\lesssim$'', notice that by Proposition \ref{prop:CalphaOsc} we have
\begin{align*}
\|b\|_{\dot C^{0, \alpha, \beta}(\R^d)} &= \sup_{x_2 \ne y_2} \Big\|\frac{b(\cdot, x_2)- b(\cdot, y_2)}{|x_2-y_2|^{\beta}} \Big\|_{\dot C^{0, \alpha}} \\
&\sim \sup_{x_2 \ne y_2} \frac{1}{|x_2-y_2|^{\beta}}
 \sup_{I_1} \frac{1}{\ell(I_1)^\alpha} \fint_{I_1} |b_{x_2, y_2}-\langle b_{x_2, y_2} \rangle_{I_1}|,
\end{align*}
where
\[
b_{x_2, y_2}=b(\cdot, x_2)- b(\cdot, y_2).
\]
So we fix $x_2 \ne y_2$ and a cube $I_1 \subset \R^{d_1}$.
By expanding $b(\cdot, x_2)$ as in the proof of Proposition \ref{prop:CalphaOsc} (using the same notation as there) we get
\begin{equation*}
  b(\cdot, x_2)=\sum_{k=1}^\infty\Big(\fint_{Q_{k+1}(x_2)}b(\cdot, z_2)\ud z_2-\fint_{Q_k(x_2)}b(\cdot, z_2)\ud z_2\Big)+\fint_{Q(\frac12(x_2+y_2),\frac12\abs{x_2-y_2})}b(\cdot, z_2)\ud z_2.
\end{equation*}
Hence
\begin{align*}
b_{x_2, y_2}&= \sum_{k=1}^\infty \big[ \langle b\rangle_{Q_{k+1}(x_2), 2} - \langle b\rangle_{Q_k(x_2), 2}\big]
 - \sum_{k=1}^\infty \big[ \langle b\rangle_{Q_{k+1}(y_2), 2} - \langle b\rangle_{Q_k(y_2), 2}\big].
\end{align*}
We first consider the contribution of the first summand. We have
\begin{align*}
& \fint_{I_1}\left|\big[ \langle b\rangle_{Q_{k+1}(x_2), 2} - \langle b\rangle_{Q_k(x_2), 2}\big]-\bla\big[ \langle b\rangle_{Q_{k+1}(x_2), 2} - \langle b\rangle_{Q_k(x_2), 2}\big]\bra_{I_1}\right|\\
&= \fint_{I_1}\left|\langle b-\langle b\rangle_{I_1,1}\rangle_{Q_{k+1}(x_2), 2} - \langle b-\langle b\rangle_{I_1,1}\rangle_{Q_k(x_2), 2}\right|\\
&\lesssim  \fint_{I_1 \times{Q_k(x_2)}}\big|  b-\langle b\rangle_{I_1,1} - \langle b-\langle b\rangle_{I_1,1}\rangle_{Q_k(x_2), 2}\big|\\
&\lesssim N\ell(I_1)^{\alpha} (2^{-k}|x_2-y_2|)^{\beta}.
\end{align*}
Thus, the contribution of the first summand to the average $\fint_{I_1} |b_{x_2, y_2}-\langle b_{x_2, y_2} \rangle_{I_1}|$,
can be dominated by $N\ell(I_1)^{\alpha} |x_2-y_2|^{\beta}$. Likewise, the second summand contributes the same. Thus, we have established
$$
\fint_{I_1} |b_{x_2, y_2}-\langle b_{x_2, y_2} \rangle_{I_1}| \lesssim   N \ell(I_1)^{\alpha} |x_2-y_2|^{\beta}
$$
proving that $\|b\|_{\dot C^{0, \alpha, \beta}(\R^d)} \lesssim N$ as desired.
\end{proof}
For the norm
$$
\|b\|_{\dot C^{0, \alpha}_{x_1}(\BMO_{x_2})}=\sup_{x_1\neq y_1} \left\| \frac{b(x_1, \cdot)-b(y_1, \cdot)}{|x_1-y_1|^{\beta_1}}\right\|_{\BMO(\R^{d_2})}
$$
the following oscillatory characterization holds.
\begin{prop}\label{prop:CBMOchar}
We have
\begin{equation*}
\begin{split}
  \Norm{b}{\dot C^{0,\alpha}_{x_1}(\BMO_{x_2})}
  \sim\sup\Big\{ \frac{1}{\ell(I_1)^\alpha}\frac{\osc^{1,1}(b, R)}{|R|} \colon R = I_1 \times I_2 \textup{ rectangle}\Big\}.
\end{split}
\end{equation*}
\end{prop}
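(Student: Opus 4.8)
Write the supremum on the right-hand side as $N$. The strategy mirrors the proofs of Propositions~\ref{prop:CalphaOsc} and~\ref{prop:calphacbeta}: the H\"older behaviour in $x_1$ will be produced by a dyadic telescoping of averages in the $x_1$-variable, while the $\BMO$-oscillation in $x_2$ is already of the right shape and is simply carried along. For the direction ``$\gtrsim$'' (that is, $N\lesssim\Norm{b}{\dot C^{0,\alpha}_{x_1}(\BMO_{x_2})}$) I would fix a rectangle $R=I_1\times I_2$ and start from the elementary identity
$$
b-\langle b\rangle_{I_1,1}-\langle b\rangle_{I_2,2}+\langle b\rangle_R
=\fint_{I_1}\Big([b(x_1,\cdot)-b(y_1,\cdot)]-\langle b(x_1,\cdot)-b(y_1,\cdot)\rangle_{I_2}\Big)\ud y_1 .
$$
Taking absolute values, averaging over $x_2\in I_2$ and $x_1\in I_1$, and using Fubini yields
$$
\frac{\osc^{1,1}(b,R)}{|R|}\le\fint_{I_1}\fint_{I_1}\Norm{b(x_1,\cdot)-b(y_1,\cdot)}{\BMO(\R^{d_2})}\ud y_1\ud x_1
\lesssim\ell(I_1)^\alpha\Norm{b}{\dot C^{0,\alpha}_{x_1}(\BMO_{x_2})},
$$
since $\abs{x_1-y_1}\le\sqrt{d_1}\,\ell(I_1)$ for $x_1,y_1\in I_1$; dividing by $\ell(I_1)^\alpha$ and taking the supremum over $R$ finishes this direction.

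For ``$\lesssim$'' it suffices, by the definition of the norm, to fix $x_1\neq y_1\in\R^{d_1}$ and a cube $I_2\subset\R^{d_2}$ and to prove
$$
\fint_{I_2}\big|[b(x_1,\cdot)-b(y_1,\cdot)]-\langle b(x_1,\cdot)-b(y_1,\cdot)\rangle_{I_2}\big|\lesssim N\abs{x_1-y_1}^\alpha .
$$
I would reuse the cubes $Q_k(x_1)=Q(x_{1,k},2^{-k}\abs{x_1-y_1})\subset\R^{d_1}$ and $Q_k(y_1)$ from the proof of Proposition~\ref{prop:CalphaOsc}, which satisfy $Q_{k+1}(x_1)\subset Q_k(x_1)$ and $Q_1(x_1)=Q_1(y_1)$, and telescope $b(x_1,\cdot)$ and $b(y_1,\cdot)$ in the $x_1$-variable; the $k=1$ terms cancel and
$$
b(x_1,\cdot)-b(y_1,\cdot)=\sum_{k\ge1}\big[\langle b\rangle_{Q_{k+1}(x_1),1}-\langle b\rangle_{Q_k(x_1),1}\big]-\sum_{k\ge1}\big[\langle b\rangle_{Q_{k+1}(y_1),1}-\langle b\rangle_{Q_k(y_1),1}\big],
$$
convergence (in $\BMO(\R^{d_2})$ modulo constants) being justified a posteriori by the estimates below.

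Writing $R_k:=Q_k(x_1)\times I_2$ and letting $\phi_k:=b-\langle b\rangle_{Q_k(x_1),1}-\langle b\rangle_{I_2,2}+\langle b\rangle_{R_k}$ be the function whose $L^1(R_k)$-mass equals $\osc^{1,1}(b,R_k)$, the key point is the identity
$$
\big[\langle b\rangle_{Q_{k+1}(x_1),1}-\langle b\rangle_{Q_k(x_1),1}\big]-\big\langle\langle b\rangle_{Q_{k+1}(x_1),1}-\langle b\rangle_{Q_k(x_1),1}\big\rangle_{I_2}
=\langle\phi_k\rangle_{Q_{k+1}(x_1),1}-\big\langle\langle\phi_k\rangle_{Q_{k+1}(x_1),1}\big\rangle_{I_2},
$$
obtained by a direct computation of the averages involved. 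Hence the $L^1(I_2)$-average of the left-hand side is at most $2\fint_{R_{k+1}}\abs{\phi_k}\lesssim\fint_{R_k}\abs{\phi_k}=\frac{\osc^{1,1}(b,R_k)}{|R_k|}\le N\,\ell(Q_k(x_1))^\alpha\lesssim N\,2^{-k\alpha}\abs{x_1-y_1}^\alpha$. Summing the geometric series over $k$, and treating the $y_1$-sum identically, gives the displayed bound; taking the supremum over $I_2$ and then over $x_1\neq y_1$ yields $\Norm{b}{\dot C^{0,\alpha}_{x_1}(\BMO_{x_2})}\lesssim N$.

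The main obstacle is precisely the bookkeeping hidden in this last identity: one must check that subtracting the $I_2$-average of the telescoping difference exactly annihilates the two auxiliary terms in $\phi_k$ — the $x_1$-independent constant $\langle b\rangle_{R_k}$ and the $x_2$-independent average $\langle b\rangle_{I_2,2}$ once the latter is itself averaged over $Q_{k+1}(x_1)$ — so that what controls each telescoping step is the genuine four-term mixed oscillation $\osc^{1,1}(b,R_k)$, and not merely a two-term $\BMO$-type oscillation, which is then bounded by $N\,\ell(Q_k(x_1))^\alpha$ directly from the definition of $N$. The remaining issues — the validity of the telescoping expansion, the pointwise-in-$x_1$ evaluation of $b$ as a $\BMO(\R^{d_2})$-valued function, and the convergence of the series — are routine and handled exactly as in Proposition~\ref{prop:CalphaOsc}, under the standing assumption $N<\infty$.
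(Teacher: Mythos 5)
Your proof is correct and takes essentially the same route as the paper's: the paper delegates the ``$\lesssim$'' direction to the argument of Proposition~\ref{prop:calphacbeta} with the roles of the two parameters swapped and one exponent set to zero, and your write-up is exactly that argument spelled out — telescope in $x_1$ via the cubes $Q_k(x_1)$, subtract the $I_2$-average, and recognize each telescoping step as a (dilated) rectangle oscillation $\osc^{1,1}(b,R_k)/|R_k|\le N\,\ell(Q_k(x_1))^\alpha$. The ``$\gtrsim$'' direction is also the paper's argument, just rearranged algebraically.
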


\begin{proof}
We denote the supremum on the right hand side by $N$.

We again first prove the ``$\gtrsim$'' direction. For all $x_i \in I_i$ we have
\begin{align*}
\abs{b(x_1, x_2)-&\ave{b}_{I_1,1}(x_2)-\ave{b}_{I_2,2}(x_1)+\ave{b}_{I_1\times I_2}} \\ &= \left|\fint_{I_1\times I_2}\big(b(x_1, x_2)-b(x_1,y_2)-b(y_1,x_2)+b(y_1,y_2)\big)\ud y_1\ud y_2\right|\\
&=\left|\fint_{I_1}|x_1-y_1|^\alpha \Big(\frac{b(x_1, x_2)-b(y_1,x_2)}{|x_1-y_1|^\alpha}-\Big \langle\frac{b(x_1,\cdot)-b(y_1,\cdot)}{|x_1-y_1|^\alpha}\Big \rangle_{I_2}\Big)\ud y_1\right|\\
&\lesssim \ell(I_1)^\alpha\fint_{I_1}  \Big|\frac{b(x_1, x_2)-b(y_1,x_2)}{|x_1-y_1|^\alpha}-\Big \langle \frac{b(x_1,\cdot)-b(y_1,\cdot)}{|x_1-y_1|^\alpha}\Big \rangle_{I_2}\Big|\ud y_1.
\end{align*}Then taking the average over $I_1\times I_2$ and using Fubini we obtain the estimate as desired.

For ``$\lesssim$'' we fix $x_1, y_1\in \R^{d_1}$ with $x_1\neq y_1$. Similarly as Proposition \ref{prop:calphacbeta} (just let $\alpha=0$ there) we see that for all cubes $I_2 \subset \R^{d_2}$
\[
\fint_{I_2} |b_{x_1, y_1}-\langle b_{x_1, y_1} \rangle_{I_2}| \lesssim   N |x_1-y_1|^{\alpha},
\]where $b_{x_1, y_1}=b(x_1,\cdot)-b(y_1,\cdot)$. This proves the claim.
\end{proof}

The final $\dot C^{0, \alpha}(X)$ type space of interest to us is the bi-parameter space $\dot C^{0, \alpha}_{x_1}(\dot L^r_{x_2})$.

\begin{prop}\label{prop:CLr}
We have
\begin{equation*}
\begin{split}
  \Norm{b}{\dot C^{0,\alpha}_{x_1}(\dot L^r_{x_2})}
  \sim\sup\Big\{ \frac{1}{\abs{I_1}\ell(I_1)^\alpha}\sum_{S_2 \in\mathscr S_2}\lambda_{S_2}& \osc^{1,1}(b, I_1 \times S_2)\colon \\
  &    I_1 \subset\R^{d_1},\mathscr S_2 \text{ sparse},\sum_{S_2 \in\mathscr S_2 }\abs{S_2}\lambda_{S_2}^{r'}\leq 1\Big\},
\end{split}
\end{equation*}
where $I_1 \subset \R^{d_1}$  is a cube and $\mathscr{S}_2$ is a sparse collection of cubes in $\R^{d_2}$.
\end{prop}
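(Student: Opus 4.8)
The plan is to reduce the claim to the two ingredients already at hand: the one-parameter $\dot L^{r}$ characterization of Proposition~\ref{prop:LrOsc}, used in the $x_2$ variable, and the dyadic H\"older expansion exploited in Propositions~\ref{prop:CalphaOsc}, \ref{prop:calphacbeta} and \ref{prop:CBMOchar}, now carried out in the $x_1$ variable. Denote the supremum on the right-hand side of the claim by $N$, and for $x_1\neq y_1$ write $b_{x_1,y_1}(x_2):=b(x_1,x_2)-b(y_1,x_2)$, so that, by definition, $\Norm{b}{\dot C^{0,\alpha}_{x_1}(\dot L^{r}_{x_2})}=\sup_{x_1\neq y_1}\abs{x_1-y_1}^{-\alpha}\Norm{b_{x_1,y_1}}{\dot L^{r}(\R^{d_2})}$.

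For the direction ``$\gtrsim$'' I would fix a cube $I_1\subset\R^{d_1}$, a sparse collection $\mathscr S_2$ in $\R^{d_2}$, and coefficients $\lambda_{S_2}$ with $\sum_{S_2}\abs{S_2}\lambda_{S_2}^{r'}\leq 1$, and start from the elementary identity
\[
b(x_1,x_2)-\ave{b}_{I_1,1}(x_2)-\ave{b}_{S_2,2}(x_1)+\ave{b}_{I_1\times S_2}=\fint_{I_1}\big(b_{x_1,y_1}(x_2)-\ave{b_{x_1,y_1}}_{S_2}\big)\ud y_1,
\]
valid for $x_1\in I_1$, $x_2\in S_2$. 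Jensen's inequality in $y_1$ and Fubini give $\osc^{1,1}(b,I_1\times S_2)\leq\fint_{I_1}\int_{I_1}\Norm{b_{x_1,y_1}-\ave{b_{x_1,y_1}}_{S_2}}{L^1(S_2)}\ud x_1\ud y_1$; summing against $\lambda_{S_2}$, moving the sum inside, and applying the sparse form of Proposition~\ref{prop:LrOsc} in the $x_2$ variable yields $\sum_{S_2}\lambda_{S_2}\osc^{1,1}(b,I_1\times S_2)\lesssim\fint_{I_1}\int_{I_1}\Norm{b_{x_1,y_1}}{\dot L^{r}(\R^{d_2})}\ud x_1\ud y_1\lesssim\abs{I_1}\ell(I_1)^{\alpha}\Norm{b}{\dot C^{0,\alpha}_{x_1}(\dot L^{r}_{x_2})}$, where the last step uses $\abs{x_1-y_1}\lesssim\ell(I_1)$ for $x_1,y_1\in I_1$. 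Dividing by $\abs{I_1}\ell(I_1)^{\alpha}$ and taking suprema gives $N\lesssim\Norm{b}{\dot C^{0,\alpha}_{x_1}(\dot L^{r}_{x_2})}$.

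For the direction ``$\lesssim$'' I would fix $x_1\neq y_1$; by Proposition~\ref{prop:LrOsc} it is enough to prove $\sum_{S_2}\lambda_{S_2}\int_{S_2}\abs{b_{x_1,y_1}-\ave{b_{x_1,y_1}}_{S_2}}\lesssim N\abs{x_1-y_1}^{\alpha}$ for every sparse $\mathscr S_2$ with $\sum_{S_2}\abs{S_2}\lambda_{S_2}^{r'}\leq 1$, since taking the supremum over such $\mathscr S_2$ then produces $\Norm{b_{x_1,y_1}}{\dot L^{r}(\R^{d_2})}\lesssim N\abs{x_1-y_1}^{\alpha}$ and hence $\Norm{b}{\dot C^{0,\alpha}_{x_1}(\dot L^{r}_{x_2})}\lesssim N$. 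Fixing such a collection, I would expand $b_{x_1,y_1}$ in the $x_1$ variable exactly as in the proof of Proposition~\ref{prop:CalphaOsc}, with the nested cubes $Q_k:=Q_k(x_1)$ and $\wt{Q}_k:=Q_k(y_1)$; since $Q_1=\wt{Q}_1$ the tail terms cancel and
\[
b_{x_1,y_1}=\sum_{k=1}^{\infty}\big(\ave{b}_{Q_{k+1},1}-\ave{b}_{Q_k,1}\big)-\sum_{k=1}^{\infty}\big(\ave{b}_{\wt{Q}_{k+1},1}-\ave{b}_{\wt{Q}_k,1}\big)
\]
as functions of $x_2$. For a single $k$, the identity $\ave{b}_{Q_{k+1},1}-\ave{b}_{Q_k,1}-\ave{\ave{b}_{Q_{k+1},1}-\ave{b}_{Q_k,1}}_{S_2}=\fint_{Q_{k+1}}\big(b-\ave{b}_{Q_k,1}-\ave{b}_{S_2,2}+\ave{b}_{Q_k\times S_2}\big)(z_1,\cdot)\ud z_1$, together with Jensen, $Q_{k+1}\subset Q_k$ and $\abs{Q_{k+1}}\sim\abs{Q_k}$, gives $\int_{S_2}\abs{(\ave{b}_{Q_{k+1},1}-\ave{b}_{Q_k,1})-\ave{\cdot}_{S_2}}\lesssim\abs{Q_k}^{-1}\osc^{1,1}(b,Q_k\times S_2)$; hence, by the definition of $N$ applied to the cube $Q_k$ and the same family $\mathscr S_2$, $\sum_{S_2}\lambda_{S_2}\int_{S_2}\abs{(\ave{b}_{Q_{k+1},1}-\ave{b}_{Q_k,1})-\ave{\cdot}_{S_2}}\lesssim N\ell(Q_k)^{\alpha}\lesssim N(2^{-k}\abs{x_1-y_1})^{\alpha}$, and likewise for the $\wt{Q}_k$-terms. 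Summing the triangle inequality over $k$ and using $\sum_{k}2^{-k\alpha}<\infty$ finishes the estimate.

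The only genuinely delicate point is the bookkeeping of the mixed averages: one must track which average acts in $x_1$, which in $x_2$, and which in both, and arrange the inner centering $\ave{\cdot}_{S_2}$ --- forced upon us when passing to $\dot L^{r}_{x_2}$-oscillations --- to coincide with the $\ave{b}_{S_2,2}$ and $\ave{b}_{Q_k\times S_2}$ terms occurring in $\osc^{1,1}(b,Q_k\times S_2)$. This alignment is what makes each telescoping level collapse to a single rectangular oscillation $\osc^{1,1}(b,Q_k\times S_2)$ over the \emph{same} sparse family $\mathscr S_2$ for all $k$, so that the normalization $\sum_{S_2}\abs{S_2}\lambda_{S_2}^{r'}\leq 1$ is untouched and no extra factors build up across scales. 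All remaining steps --- Jensen, Fubini, interchanging the $k$-sum with the $S_2$-sum, and comparing consecutive cubes --- are routine.
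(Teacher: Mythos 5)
Your proposal is correct and follows essentially the same route as the paper's own proof: for ``$\gtrsim$'' you reduce to the one-parameter characterization of Proposition~\ref{prop:LrOsc} after averaging out the $x_1$-variable over $I_1$, and for ``$\lesssim$'' you first fix $x_1 \neq y_1$, fix the sparse data $\mathscr S_2,\lambda_{S_2}$ (so that they depend on $x_1,y_1$ only), and only then run the telescoping expansion of Proposition~\ref{prop:CalphaOsc} in the $x_1$-variable, collapsing each level to $\osc^{1,1}(b,Q_k(\cdot)\times S_2)$ over the same $\mathscr S_2$. You also correctly identify, as the paper's subsequent remark stresses, that this order of operations is the crux: reversing it would force $\mathscr S_2,\lambda_{S_2}$ to depend on the $x_1$-integration variable and the argument would break.
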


\begin{proof}
``$\gtrsim$'': We apply Proposition \ref{prop:LrOsc} to the function $b(x_1,\cdot)-\ave{b}_{I_1,1}$ for each $x_1 \in I_1$. This shows that
\begin{equation*}
\begin{split}
  \sum_{S_2\in\mathscr S_2}\lambda_{S_2} &\int_{S_2}\abs{b(x_1,\cdot)-\ave{b}_{I_1,1}-\ave{b}_{S_2,2}(x_1)+\ave{b}_{I_1\times S_2}} \\
  &\lesssim\Norm{b(x_1,\cdot)-\ave{b}_{I_1,1}}{\dot L^r_{x_2}}
  \lesssim \fint_{I_1} \Norm{b(x_1,\cdot)-b(y_1,\cdot)}{\dot L^r_{x_2}}\ud y_1
  \lesssim \Norm{b}{\dot C^{0,\alpha}_{x_1}(\dot L^r_{x_2})}\ell(I_1)^\alpha.
\end{split}
\end{equation*}
The penultimate estimate is easy to see using Proposition \ref{prop:LrOsc}.
Integrating over $x_1 \in I_1$, we deduce the claimed bound.

``$\lesssim$'': Denoting by $N$ the right-hand side, we need to prove that $\Norm{b(x_1,\cdot)-b(y_1,\cdot)}{\dot L_{x_2}^r}\lesssim N\abs{x_1-y_1}^\alpha$ for all $x_1\neq y_1 \in\R^{d_1}$. By Proposition \ref{prop:LrOsc}, this is equivalent to
\begin{equation}\label{eq:LrOscToProve}
   \sum_{S_2\in\mathscr S_2}\lambda_{S_2} \int_{S_2}\abs{b(x_1,\cdot)-b(y_1,\cdot)-\ave{b(x_1,\cdot)-b(y_1,\cdot)}_{S_2}}\lesssim N\abs{x_1-y_1}^\alpha,
\end{equation}
where $\mathscr S_2$ and $\lambda_{S_2}$ are as in the claim. We then expand $b(x_1,\cdot)$ as in the proof of Proposition \ref{prop:CalphaOsc} (using the same notation as there):
\begin{equation*}
  b(x_1,\cdot)=\sum_{k=1}^\infty\Big(\fint_{Q_{k+1}(x_1)}b(z_1,\cdot)\ud z_1-\fint_{Q_k(x_1)}b(z_1,\cdot)\ud z_1\Big)+\fint_{Q(\frac12(x_1+y_1),\frac12\abs{x_1-y_1})}b(z_1,\cdot)\ud z_1.
\end{equation*}
Doing the same with $b(y_1,\cdot)$ and using the triangle inequality, we find that
\begin{equation*}
\begin{split}
  &LHS\eqref{eq:LrOscToProve} \\
  &\leq\sum_{u_1=x_1,y_1}\sum_{k=1}^\infty\sum_{S_2\in\mathscr S_2}\lambda_{S_2} \int_{S_2}\abs{\ave{b}_{Q_{k+1}(u_1),1}-\ave{b}_{Q_k(u_1),1}-(\ave{b}_{Q_{k+1}(u_1)\times S_2}-\ave{b}_{Q_k(u_1)\times S_2})} \\
  &\leq\sum_{u_1=x_1,y_1}\sum_{k=1}^\infty\sum_{S_2\in\mathscr S_2}\lambda_{S_2} \int_{S_2}\fint_{Q_{k+1}(u_1)}\abs{b-\ave{b}_{Q_k(u_1),1}-(\ave{b}_{S_2,2}-\ave{b}_{Q_k(u_1)\times S_2})} \\
  &\lesssim\sum_{u_1=x_1,y_1}\sum_{k=1}^\infty\sum_{S_2\in\mathscr S_2}\lambda_{S_2} \int_{S_2}\fint_{Q_{k}(u_1)}\abs{b-\ave{b}_{Q_k(u_1),1}-\ave{b}_{S_2,2}+\ave{b}_{Q_k(u_1)\times S_2}} \\
  &\leq\sum_{u_1=x_1,y_1}\sum_{k=1}^\infty N\ell(Q_k(u_1))^\alpha \lesssim \sum_{u_1=x_1,y_1}\sum_{k=1}^\infty N (2^{-k}\abs{x_1-y_1})^\alpha \lesssim N\abs{x_1-y_1}^\alpha,
\end{split}
\end{equation*}
and this proves \eqref{eq:LrOscToProve}.
\end{proof}
\begin{rem}
Note that it seems somewhat important that in the above proof we apply Proposition \ref{prop:LrOsc} directly to the defining condition $\Norm{b(x_1,\cdot)-b(y_1,\cdot)}{\dot L_{x_2}^r}\lesssim\abs{x_1-y_1}^\alpha$, so that the extremizing $\mathscr S_2$ and $\lambda_{S_2}$ depend on $x_1,y_1$ only, and only then apply the (considerations of) Proposition \ref{prop:CalphaOsc}. Alternatively, it might occur to one to start with (a vector-valued version of) Proposition \ref{prop:CalphaOsc}, reducing the proof to estimating $\fint_{I_1}\Norm{b(z_1,\cdot)-\ave{b}_{I_1,1}}{\dot L_{x_2}^r}\ud z_1\lesssim\ell(I_1)^\alpha$. If we now tried to continue with Proposition \ref{prop:LrOsc}, applied to $b(z_1,\cdot)-\ave{b}_{I_1,1}$ for each $z_1\in I_i$, the resulting $\mathscr S_2$ and $\lambda_{S_2}$ would in general depend on the integration parameter $z_1\in I_1$.
\end{rem}

\subsubsection*{Bounded mean oscillation}
We define, for $1 \le p_i < \infty$, the rectangular $\BMO$ norm
$$
\|b\|_{\BMO_{\textup{rect}}(p_1, p_2)} = \sup_{R=I_1 \times I_2} \frac{\osc^{p_1,p_2}(b,R)}{|I_1|^{1/{p_1}} |I_2|^{1/{p_2}}}.
$$
This space is directly in the oscillatory form. It does not appear to enjoy the John--Nirenberg property, and so the choice of the exponents can matter. Usually
in the literature $$\|b\|_{\BMO_{\textup{rect}}} := \|b\|_{\BMO_{\textup{rect}}(2, 2)}.$$ For many purposes this is not the correct or optimal bi-parameter $\BMO$ space.

We will later have use for the smaller product BMO space, but it does not enjoy an oscillatory characterisation, and we will not discuss its definition now.

If $X$ is a Banach space, we say that a locally integrable $b \colon \R^d \to X$
belongs to the $X$-valued BMO space $\BMO(p,X)$, $0 < p < \infty$, if
$$
\|b\|_{\BMO(p, X)} := \sup_{I} \Big( \fint_{I} |b - \langle b \rangle_{I}|_X^p \Big)^{1/p}
  < \infty.
$$
By the Banach-valued John--Nirenberg theorem (see e.g. \cite[Theorem 3.2.30]{HNVW1}) we have that all these norms are equivalent when $p$ varies, and we may again
set
$$
\|b\|_{\BMO(X)} := \sup_{I}  \fint_{I} |b - \langle b \rangle_{I}|_X.
$$

From this point on, we do not obtain oscillatory characterization for our spaces, but we can record one-sided estimates
of oscillatory quantities by function space norms. These are of philosophical use.
Later, we may obtain
commutator lower bounds with respect to these potentially smaller oscillatory quantities.
We start with the bi-parameter space $\BMO_{x_1}(\dot L^{r}_{x_2})$.
\begin{lem}\label{lem:BMOLr}
We have
\begin{equation*}
\begin{split}
  \sup\Big\{ \frac{1}{\abs{I_1}} \sum_{S_2 \in\mathscr S_2}\lambda_{S_2} \osc^{1,1}(b, I_1 \times S_2) \colon 
     I_1 \subset\R^{d_1},\mathscr S_2 \text{ sparse},&\sum_{S_2 \in\mathscr S_2 }\abs{S_2}\lambda_{S_2}^{r'}\leq 1\Big\} \\
     &\lesssim \|b\|_{\BMO_{x_1}(\dot L^{r}_{x_2})},
\end{split}
\end{equation*}
where $I_1 \subset \R^{d_1}$  is a cube and $\mathscr{S}_2$ is a sparse collection of cubes in $\R^{d_2}$.
\end{lem}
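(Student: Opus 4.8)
The plan is to reuse, essentially verbatim, the ``$\gtrsim$'' half of Proposition \ref{prop:CLr}: the only change is that the H\"older control in the $x_1$-variable is weakened to $\BMO$ control, which actually \emph{simplifies} matters, since no analogue of the telescoping argument of Proposition \ref{prop:CalphaOsc} is needed here ($\BMO$ is itself an oscillatory quantity). We may of course assume $\Norm{b}{\BMO_{x_1}(\dot L^r_{x_2})}<\infty$, as otherwise there is nothing to prove.

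First I would fix a cube $I_1\subset\R^{d_1}$, a sparse collection $\mathscr S_2$ of cubes in $\R^{d_2}$, and coefficients $\lambda_{S_2}\geq 0$ with $\sum_{S_2\in\mathscr S_2}\abs{S_2}\lambda_{S_2}^{r'}\leq 1$. For each fixed $x_1\in I_1$, I apply the sparse form of the $\dot L^r$-characterization in Proposition \ref{prop:LrOsc} to the one-parameter function $b(x_1,\cdot)-\ave{b}_{I_1,1}$ on $\R^{d_2}$, using this same $\mathscr S_2$ and these same $\lambda_{S_2}$. Since
\[
  \Big(b(x_1,\cdot)-\ave{b}_{I_1,1}\Big)-\Ave{b(x_1,\cdot)-\ave{b}_{I_1,1}}_{S_2,2}
  =b(x_1,\cdot)-\ave{b}_{I_1,1}-\ave{b}_{S_2,2}(x_1)+\ave{b}_{I_1\times S_2},
\]
this gives, with an implied constant depending only on $r$ and $d_2$,
\[
  \sum_{S_2\in\mathscr S_2}\lambda_{S_2}\int_{S_2}\Babs{b(x_1,\cdot)-\ave{b}_{I_1,1}-\ave{b}_{S_2,2}(x_1)+\ave{b}_{I_1\times S_2}}
  \lesssim\Norm{b(x_1,\cdot)-\ave{b}_{I_1,1}}{\dot L^r_{x_2}}.
\]
Integrating over $x_1\in I_1$ and dividing by $\abs{I_1}$, the left-hand side becomes $\tfrac{1}{\abs{I_1}}\sum_{S_2\in\mathscr S_2}\lambda_{S_2}\osc^{1,1}(b,I_1\times S_2)$ --- this is just the iterated-integral description of the mixed $L^1_{x_1}L^1_{x_2}$ norm defining $\osc^{1,1}$ --- while the right-hand side becomes $\fint_{I_1}\Norm{b(x_1,\cdot)-\ave{b}_{I_1,1}}{\dot L^r_{x_2}}\ud x_1$.

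Finally I would bound this average by $\Norm{b}{\BMO_{x_1}(\dot L^r_{x_2})}$. Regard $b$ as the function $x_1\mapsto b(x_1,\cdot)$ valued in the Banach space $\dot L^r(\R^{d_2})$ (the quotient of $L^r$ by the constants); since the quotient map $L^r\to\dot L^r$ is bounded and linear it commutes with the Bochner average over $I_1$, so the $\dot L^r_{x_2}$-valued average of $b$ over $I_1$ is represented by $\ave{b}_{I_1,1}$, and therefore
\[
  \fint_{I_1}\Norm{b(x_1,\cdot)-\ave{b}_{I_1,1}}{\dot L^r_{x_2}}\ud x_1\leq\Norm{b}{\BMO_{x_1}(\dot L^r_{x_2})},
\]
using the $p=1$ description of the $X$-valued $\BMO$ norm (legitimate by the Banach-valued John--Nirenberg theorem recalled above). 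Taking the supremum over all $I_1$, all sparse $\mathscr S_2$, and all admissible $(\lambda_{S_2})$ completes the argument.

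I do not expect a serious obstacle: the substance is already contained in Propositions \ref{prop:LrOsc} and \ref{prop:CLr}, and the only mildly delicate points are the bookkeeping that identifies the integrated left-hand side with $\sum_{S_2}\lambda_{S_2}\osc^{1,1}(b,I_1\times S_2)$, together with the identification of the $\dot L^r_{x_2}$-valued $\BMO$ average over $I_1$ with the scalar average $\ave{b}_{I_1,1}$ (and the John--Nirenberg normalization of the integrability exponent).
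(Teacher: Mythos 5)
Your proof is correct and follows essentially the same route as the paper: apply Proposition~\ref{prop:LrOsc} to $b(x_1,\cdot)-\ave{b}_{I_1,1}$ for each fixed $x_1\in I_1$, then average over $x_1\in I_1$, and finally dominate by the $\BMO_{x_1}(\dot L^r_{x_2})$ norm. You have merely spelled out the bookkeeping that the paper compresses into the phrase ``As in Proposition~\ref{prop:CLr}''; note only that the paper defines $\|b\|_{\BMO(X)}$ directly with exponent $p=1$, so the invocation of John--Nirenberg in your last step is not actually needed.
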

\begin{proof}
As in Proposition \ref{prop:CLr} we have by applying Proposition \ref{prop:LrOsc} to $b(x_1,\cdot)-\ave{b}_{I_1,1}$ that
\begin{align*}
\fint_{I_1} \sum_{S_2\in\mathscr S_2}\lambda_{S_2} \int_{S_2}\abs{b(x_1,\cdot)-&\ave{b}_{I_1,1}-\ave{b}_{S_2,2}(x_1)+\ave{b}_{I_1\times S_2}} \\
  &\lesssim \fint_{I_1} \Norm{b(x_1,\cdot)-\ave{b}_{I_1,1}}{\dot L^r_{x_2}} \lesssim \|b\|_{\BMO_{x_1}(\dot L^{r}_{x_2})}.\qedhere
\end{align*}
\end{proof}

\subsubsection*{The space $\dot L^{r_1} \dot L^{r_2}$}
The final condition that comes up in our commutator lower bounds is
\begin{align*}
N := \sup \sum_{S_1 \in\mathscr S_1}\sum_{S_2 \in\mathscr S_2} \lambda_{1,S_1}\lambda_{2,S_2}\osc^{1,1}(b,S_1\times S_2),
\end{align*}
where $r_i \in (1,\infty)$ and the supremum is taken over all sparse collections $\mathscr S_i$ of cubes in $\R^{d_i}$ and non-negative coefficients $\lambda_{i, S_i}$ satisfying $\sum_{S_i\in\mathscr S_i}\lambda_{i, S_i}^{r_i'}|S_i| \le 1$. We again only have the direction that
\begin{equation}\label{eq:Lrr}
  N \lesssim \|b\|_{\dot L^{r_1}_{x_1} \dot L^{r_2}_{x_2}},
\end{equation}
which is primarily of philosophical use. This can be seen by utilising Proposition \ref{prop:LrOsc} and its proof.

\section{Weak factorization and corollaries}\label{sec:WF}

\begin{defn}[Non-degenerate kernels]
Let $K_i$ be a Calder\'on-Zygmund kernel on $\R^{d_i}$.
We say that $K_i$ is {\em (symmetrically) non-degenerate} if there is a constant $c_0 > 0$ such that for every $y_i\in\R^{d_i}$ and $r>0$ there exists $x_i\in B(y_i,r)^c$ so that there holds
\[
|K_i(x_i,y_i)|\ge \frac 1{c_0 r^{d_i}}\quad\Big(\text{and}\quad |K_i(y_i,x_i)|\ge \frac 1{c_0 r^{d_i}}\Big).
\]
\end{defn}

Notice that the $x_i$ in the above definition necessarily satisfies $|x_i-y_i| \sim r$.
If $T_i$ is a CZO whose kernel $K_i$ is a non-degenerate Calder\'on-Zygmund kernel, then we say that $T_i$ is a non-degenerate CZO. However,
in what follows we do not really need CZOs, we only need the kernels, and $T_1$ and $T_1^*$ will just be notation for the integrals
\begin{align*}
T_1 h(x_1) = \int_{\R^{d_1}} K_1(x_1, y_1)  h(y_1)\ud y_1, \qquad x_1 \in \R^{d_1} \setminus \operatorname{spt} h, \\
T_1^* h(x_1) = \int_{\R^{d_1}} K_1(y_1, x_1)  h(y_1)\ud y_1, \qquad x_1 \in \R^{d_1} \setminus \operatorname{spt} h,
\end{align*}
and similarly for $T_2$.

\begin{defn}[The ``reflected'' cube $\wt I_i$]\label{def:tildeI}
Let $K_i$ be a fixed non-degenerate Calder\'on-Zygmund kernel on $\R^{d_i}$, and fix a large constant $A \ge 3$. For each cube $I_i \subset\R^{d_i}$ with center $c_{I_i}$ and sidelength $\ell(I_i)$, we define another cube $\wt I_i\subset\R^{d_i}$ of the same size by choosing a center $c_{\wt I_i}$, guaranteed by the non-degenaracy of $K_i$, so that
\begin{equation*}
  |c_{I_i}-c_{\wt I_i}| \sim A\ell(I_i),\qquad |K_i(c_{\wt I_i}, c_{I_i})| \sim (A \ell(I_i))^{-d_i}.
\end{equation*}
Notice that $\dist(I_i,\wt I_i)\sim A\ell(I_i)$. If $K_i$ is symmetrically non-degenerate, we require in addition that $|K_i(c_{I_i}, c_{\wt I_i})| \sim (A \ell(I_i))^{-d_i}$. While the choice of $\wt I_i$ may not be unique in general, in the symmetric case we can and will make it in such a way that $\,\wt{\!\wt I_i}=I_i$.
\end{defn}

By (the proof) of Proposition 2.2 in \cite{Hy5} we have for all $x_i \in \wt I_i$ and $y_i \in I_i$ that
\begin{equation}\label{eq:Hol1}
|K_i(x_i, y_i) - K_i(c_{\wt I_i}, c_{I_i})| \le \frac{CA^{-\alpha_i}}{(A \ell(I_i))^{d_i}}.
\end{equation}

If $K_1$ and $K_2$ are two (symmetrically) non-degenerate Calder\'on--Zygmund kernels on $\R^{d_1}$ and $\R^{d_2}$, and $I_i \subset \R^{d_i}$ are cubes, we of course define $\wt I_i$ with respect to $K_i$. For each rectangle $R=I_1\times I_2\subset\R^d = \R^{d_1} \times \R^{d_2}$, we define $\wt R:=\wt I_1\times \wt I_2$.

For a rectangle $R=I_1\times I_2\subset  \R^{d_1+d_2} = \R^d$ and $p_1, p_2 \in [1,\infty]$ we denote
\begin{equation*}
L^{p_1}_{x_1, 0}L^{p_2}_{x_2, 0}(R)=\Big\{f\in L^{p_1}_{x_1}L^{p_2}_{x_2}(R): \int_{I_1} f = \int_{I_2} f = 0\Big\}.
\end{equation*}
%Let now $K_1$ be a non-generate CZ kernel on $\R^{d_1}$ and $K_2$ be a non-generate CZ kernel on $\R^{d_2}$, and fix a large constant $A \ge 3$.
%Given a rectangle $R$ let $c_I$ denote the center of $I$ and $c_J$ denote the center of $J$, and using the non-degeneracy choose $c_{\wt I}$
%so that $|c_I-c_{\wt I}| \sim A\ell(I)$ and $|K_1(c_I, c_{\wt I})| \sim (A \ell(I))^{-d_1}$. Define $\wt I$ to be the cube centered at $c_{\wt I}$ and
%of side length $\ell(I)$. Similarly, define the cube $\wt J$ and set $\wt R=\wt I \times \wt J$. Notice that now
%$$
%\dist (I, \wt I)\sim A \ell(I) \qquad \textup{and} \qquad \dist (J, \wt J)\sim A \ell(J)
%$$
%and by (the proof) of Proposition 2.2 in \cite{Hy5} we have for all $x = (x_1, x_2) \in \wt R$ and $y = (y_1, y_2) \in R$ that
%\begin{equation}\label{eq:Hol1}
%|K_1(x_1, y_1) - K_1(c_{\wt I}, c_I)| \le \frac{CA^{-\alpha_1}}{(A \ell(I))^{d_1}}
%\end{equation}
%and
%\begin{equation}\label{eq:Hol2}
%|K_2(x_2, y_2) - K_2(c_{\wt J}, c_J)| \le \frac{CA^{-\alpha_2}}{(A \ell(J))^{d_2}}.
%\end{equation}

We record and prove the following bi-parameter weak factorization theorem. A simpler one-parameter version is
Lemma 2.5 of \cite{Hy5}. This will have important corollaries that we use to deduce the commutator lower bounds.

\begin{lem}\label{lem:IteratedFactorization} Let $K_i$ be a non-degenerate kernel on $\R^{d_i}$, $i=1,2$.
If $R = I_1 \times I_2 \subset \R^d$ is a rectangle and $f\in L^1_{00}(R)$, then
we can decompose
\begin{equation*}
\begin{split}
	f &= 1_{\wt R} T_1T_2 h-T_2 h \cdot T_1^*1_{\wt R}-T_1h\cdot T_2^*1_{\wt R}+h T_1^*T_2^*1_{\wt R} + \sum_{j=1}^3\tilde{f}_j, \\
	  %&=: B_{T_1,T_2}(1_{\wt R},h)+  \sum_{j=1}^3\tilde{f}_j,
\end{split}
\end{equation*}
where each $\tilde{ f}_j \in L^1_{00}(R_j)$ for $R_1 = \wt I_1\times I_2$, $R_2 =  I_1 \times \wt I_2$ and $R_3 = \wt R$, and they satisfy
$$
\abs{\tilde f_1(x)} \lesssim A^{-\alpha_1} 1_{\wt I_1}(x_1) \langle |f| \rangle_{I_1,1}(x_2), \quad \abs{\tilde f_2(x)} \lesssim A^{-\alpha_2} \langle |f| \rangle_{I_2,2}(x_1)1_{\wt I_2}(x_2)
$$
and
$$
\abs{\tilde f_3(x)} \lesssim A^{-\alpha_1} A^{-\alpha_2} 1_{\wt R}(x) \langle |f| \rangle_{R}.
$$
Moreover, $h \in L^1(R)$ satisfies
$$
\abs{h(x)} \lesssim A^d \abs{f(x)}.
$$
In particular, if $f \in L^{u_1}_{x_1, 0}L^{u_2}_{x_2, 0}(R)$, $1 \le u_i \le \infty$, then also
$$
\|h\|_{L^{u_1}_{x_1}L^{u_2}_{x_2}} \lesssim A^d \|f\|_{L^{u_1}_{x_1}L^{u_2}_{x_2}} \qquad \textup{and} \qquad \Norm{\tilde{f}_j}{L^{u_1}_{x_1}L^{u_2}_{x_2}} \lesssim A^{-\min(\alpha_1, \alpha_2)} \Norm{f}{L^{u_1}_{x_1}L^{u_2}_{x_2}},
$$
where the implicit constants depend only on the dimensions and the kernel constants.
\end{lem}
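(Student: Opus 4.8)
The plan is to mimic the one-parameter factorization of \cite[Lemma 2.5]{Hy5} but applied twice — once in the $x_1$ variable and once in the $x_2$ variable — and to carefully bookkeep the error terms generated at each stage. The basic one-parameter trick is: given $g$ supported on a cube $I$ with $\int g = 0$, write $g = g_1 + g_2$ where $g_1 := 1_{\wt I}\, T(h)$ with $h := g / (T^*1_{\wt I})$ (note $T^*1_{\wt I}$ is, by \eqref{eq:Hol1}, essentially the constant $K(c_{\wt I},c_I)$ on $I$, so this division is harmless and $|h|\lesssim A^d|g|$), and then use the algebraic identity $g = -h\cdot T^*1_{\wt I} = T h\cdot 1_{\wt I} - 1_{\wt I} Th - h\, T^* 1_{\wt I}$, rearranged so that the ``main'' term is a product $Th \cdot 1_{\wt I}$ living on $\wt I$ and the remainder is $h(T^*1_{\wt I} - K(c_{\wt I},c_I)1_I)$-type, which by \eqref{eq:Hol1} is $\lesssim A^{-\alpha}\langle |g|\rangle_I$ on $\wt I$. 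Keeping the $\int$-zero normalization requires subtracting the relevant average, which only improves the error bound.

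First I would run this procedure in the $x_1$-variable: for $f \in L^1_{00}(R)$, for each fixed $x_2$ the slice $f(\cdot,x_2)$ has mean zero on $I_1$, so we get $f = 1_{\wt I_1} T_1 h_1 - T_1 h_1 \cdot (\text{something on }I_1) - \ldots$, more precisely $f = 1_{\wt I_1}\otimes(\ldots)$-type terms plus an error $e_1$ with $|e_1(x)|\lesssim A^{-\alpha_1}1_{\wt I_1}(x_1)\langle|f|\rangle_{I_1,1}(x_2)$ supported on $\wt I_1 \times I_2$, and an intermediate function $h_1$ with $|h_1|\lesssim A^d|f|$ and still mean zero in $x_2$ (since the $x_1$-operation is linear in the $x_2$-slice and commutes with $\int_{I_2}$). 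Then I would apply the same procedure in the $x_2$-variable to $h_1$ (which has mean zero on $I_2$). Each of the three/four terms produced by the first step is either already a tensor product with a fixed factor in $x_1$ or is the error $e_1$; applying the $x_2$-factorization to $h_1$ and propagating through produces: the genuine four-term expansion $1_{\wt R}T_1T_2 h - T_2h\cdot T_1^*1_{\wt R} - T_1 h\cdot T_2^*1_{\wt R} + h\,T_1^*T_2^*1_{\wt R}$ (with $h:=h_2$, so $|h|\lesssim A^d|h_1|\lesssim A^d|f|$ after absorbing the second $A^d$ into constants — actually one should check the two $A^d$'s don't multiply, but since $T_1^*1_{\wt I_1}\sim K_1(c_{\wt I_1},c_{I_1})$ is genuinely $\sim(A\ell(I_1))^{-d_1}$ and similarly in $x_2$, the denominators are controlled and $|h|\lesssim A^d|f|$ with a single power, matching the claim); plus $\tilde f_2$, the $x_2$-error, of size $A^{-\alpha_2}\langle|f|\rangle_{I_2,2}(x_1)1_{\wt I_2}(x_2)$ on $I_1\times\wt I_2$; plus $\tilde f_1 := e_1$ from the first step, size $A^{-\alpha_1}1_{\wt I_1}(x_1)\langle|f|\rangle_{I_1,1}(x_2)$ on $\wt I_1\times I_2$; plus $\tilde f_3$, the term where the $x_2$-factorization is applied to the $x_1$-error $e_1$, which therefore carries \emph{both} smallness factors, $|\tilde f_3(x)|\lesssim A^{-\alpha_1}A^{-\alpha_2}1_{\wt R}(x)\langle|f|\rangle_R$, and lives on $\wt I_1\times\wt I_2 = \wt R$. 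At the end I would verify the mean-zero properties of each $\tilde f_j$ on the respective directions of $R_j$ (each is constructed by subtracting the appropriate partial averages, as in the one-parameter lemma) and conclude the $L^{u_1}_{x_1}L^{u_2}_{x_2}$ bounds for $h$ and the $\tilde f_j$ by taking mixed norms of the pointwise estimates, using that $\|1_{\wt I_1}\langle|f|\rangle_{I_1,1}\|_{L^{u_1}_{x_1}L^{u_2}_{x_2}(R_1)} = |\wt I_1|^{1/u_1}\||I_1|^{-1}\int_{I_1}|f(y_1,\cdot)|\ud y_1\|_{L^{u_2}_{x_2}(I_2)}\lesssim \|f\|_{L^{u_1}_{x_1}L^{u_2}_{x_2}(R)}$ by Minkowski's/Hölder's inequality (and symmetrically for $\tilde f_2$, $\tilde f_3$), and $|h|\lesssim A^d|f|$ pointwise.

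The main obstacle I expect is the combinatorial bookkeeping of the second step: when the $x_2$-factorization is applied, one must track \emph{which} of the terms from the first step the operation is being applied to, and ensure that the products recombine exactly into the stated four-term main expansion plus exactly three error terms — in particular that the ``cross'' term (error-in-$x_1$ then factorized-in-$x_2$) is the \emph{only} source of $\tilde f_3$ and genuinely inherits both $A^{-\alpha_i}$ gains. A secondary subtlety is confirming that the two divisions (by $T_1^*1_{\wt I_1}$ and then by $T_2^*1_{\wt I_2}$) produce only a single power $A^d$ and not $A^{2d}$ in the bound for $h$; this works because the symmetric non-degeneracy is used to keep $|T_i^*1_{\wt I_i}(x_i)|\sim (A\ell(I_i))^{-d_i}$ uniformly on $I_i$ (via \eqref{eq:Hol1} with $A$ large, absorbing the $A^{-\alpha_i}$ perturbation), so each division contributes a \emph{bounded} factor times the normalization $(A\ell(I_i))^{-d_i}$ which then cancels against the corresponding $T_i$ applied to the indicator; the net effect is a harmless constant and the two separate $A^d$'s that appear come only from $|\wt R|/|R|$-type volume ratios, which combine to a single $A^d$ as stated. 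Beyond these, all remaining steps are routine applications of \eqref{eq:Hol1}, the triangle inequality, and Minkowski's inequality in mixed norms.
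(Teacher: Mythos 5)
Your iterative two-step strategy is essentially the same as the paper's proof, which does it in one shot: with $\phi = 1_{\wt R} = 1_{\wt I_1}\otimes 1_{\wt I_2}$ one sets $h = f/T_1^*T_2^*\phi = f/(T_1^*1_{\wt I_1}\cdot T_2^*1_{\wt I_2})$ (this is exactly your $h_2$), writes $f = h\,T_1^*T_2^*\phi$, subtracts the four-term commutator block, and observes that the single error $\tilde f = T_1h\,T_2^*\phi + T_2h\,T_1^*\phi - \phi\,T_1T_2h$ visibly splits into your $\tilde f_1 = 1_{\wt I_1}T_1(f/T_1^*1_{\wt I_1})$, $\tilde f_2 = 1_{\wt I_2}T_2(f/T_2^*1_{\wt I_2})$, and $\tilde f_3 = -\phi T_1T_2h = 1_{\wt I_1}T_1(\tilde f_2/T_1^*1_{\wt I_1})$, which is the precise form in which the cross term inherits both $A^{-\alpha_i}$ gains via two applications of the one-parameter estimate. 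The only places to tighten a full write-up are the mean-zero verifications (the paper does them by the adjoint identity, e.g.\ $\int_{\wt I_1}\tilde f_1 = \int T_1^*1_{\wt I_1}\cdot\frac{f}{T_1^*1_{\wt I_1}} = \int_{I_1}f = 0$, rather than by subtracting averages) and a slightly more careful statement of the one-parameter identity in your Step 1, which as written is a bit garbled.
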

\begin{proof}
Denote $\phi=1_{\wt R}=1_{\wt I_1}\otimes 1_{\wt I_2}=: \phi_1\otimes \phi_2$. We write
\begin{equation*}
\begin{split}
f = \frac{f}{T_1^*T_2^*\phi}T_1^*T_2^*\phi &= \phi T_1T_2h -T_2h\cdot T_1^*\phi-T_1h\cdot T_2^*\phi+h T_1^*T_2^*\phi + \tilde{f},
\end{split}
\end{equation*}
where $$h:=\frac{f}{T_1^*T_2^*\phi}$$ and
\begin{equation*}
\begin{split}
\tilde{f} &= T_1h\cdot T_2^*\phi+T_2h\cdot T_1^*\phi- \phi T_1T_2h \\
&= \phi_1T_1\Big(\frac{f}{T_1^*\phi_1}\Big)+\phi_2T_2\Big(\frac{f}{T_2^*\phi_2}\Big) - \phi T_1\Big( \frac{1}{T_1^*\phi_1}T_2\Big(\frac{f}{T_2^*\phi_2}\Big) \Big) \\
&= \tilde{f}_1+\tilde{f}_2+\tilde{f}_3.
\end{split}
\end{equation*}
Everything is well-defined, since in fact we have the estimates $T_i^*1_{\wt I_i}(x_i) \gtrsim A^{-d_i}$ for $x \in I_i$ (remember that $f$ is supported on $R$). To see this, fix $x_1\in I_1$ and write
\begin{align*}
T_1^*1_{\wt I_1}(x_1)&=\int_{\wt I_1} K_1(y_1,x_1) \ud y_1\\
&= K_1(c_{\wt I_1}, c_{I_1})|I_1|+\int_{\wt I_1} \big(K_1(y_1,x_1)-K_1(c_{\wt I_1}, c_{I_1})\big) \ud y_1.
\end{align*}
Using \eqref{eq:Hol1} it is then easy to see that $|T_1^*1_{\wt I_1}(x_1)|\gtrsim A^{-d_1}$. The other estimate is symmetric.
This also gives that
$$
\abs{h(x)} \lesssim A^d \abs{f(x)}.
$$

%Moreover, we obviously have
%\begin{equation*}
%\Norm{h}{L^{u_1}_{x_1}L^{u_2}_{x_2}} \lesssim A^{d}\Norm{f}{L^{u_1}_{x_1}L^{u_2}_{x_2}}.
%\end{equation*}
It is also immediate that $\int_{I_2}\tilde f_1=0$ and
\begin{equation*}\label{wfp1}
\int_{{\wt I_1}}\tilde{f}_1 =\int_{\R^{d_1}} \phi_1T_1\Big(\frac{f}{T_1^*\phi_1}\Big)=\int_{\R^{d_1}}T_1^*\phi_1\frac{f}{T_1^*\phi_1}=\int_{I_1}f=0 .
\end{equation*} Similarly, we can see that $\int_{I_1}\tilde f_2=\int_{\wt I_2} \tilde f_2=\int_{\wt I_1} \tilde f_3=\int_{\wt I_2} \tilde f_3=0$.

We now prove the poinwise estimates of $\tilde f_i$.
%It remains to prove
%\[
%\Norm{\tilde{f}_j}{L^{u_1}_{x_1}L^{u_2}_{x_2}} \lesssim A^{-\min(\alpha_1, \alpha_2)} \Norm{f}{L^{u_1}_{x_1}L^{u_2}_{x_2}}.
%\]
Let $g \colon \R^d \to \C$ satisfy $g(x) = 0$ if $x_1 \not \in I_1$ and $\int_{I_1} g = 0$.
For $x = (x_1, x_2) \in \wt I_1 \times \R^{d_2}$ we write
$$
T_1\Big( \frac{g}{T_1^*\phi_1}\Big)(x) = T_1\Big( \frac{g(\cdot, x_2)}{T_1^*\phi_1} - \frac{g(\cdot, x_2)}{|I_1| K_1(c_{\wt I_1}, c_{I_1})} \Big)(x_1) + \frac{1}{|I_1| K_1(c_{\wt I_1}, c_{I_1})} T_1(g(\cdot, x_2))(x_1).
$$
We have using \eqref{eq:Hol1} that
\begin{align*}
\Big| \frac{1}{|I_1| K_1(c_{\wt I_1}, c_{I_1})} T_1(g(\cdot, x_2))(x_1) \Big| &\sim A^{d_1} \Big|\int_{I_1} [K_1(x_1, y_1) - K_1(c_{\wt I_1}, c_{I_1})] g(y_1,x_2)\ud y_1 \Big| \\
&\lesssim A^{-\alpha_1} \fint_{I_1} |g(\cdot, x_2)|.
\end{align*}
It is similarly straightforward that
$$
\Big| T_1\Big( \frac{g(\cdot, x_2)}{T_1^*\phi_1} - \frac{g(\cdot, x_2)}{|I_1| K_1(c_{\wt I_1}, c_{I_1})} \Big)(x_1) \Big| \lesssim A^{-\alpha_1} \fint_{I_1} |g(\cdot, x_2)|.
$$
It follows that
\begin{equation}\label{eq:eq1}
\Big|T_1\Big( \frac{g}{T_1^*\phi_1}\Big)(x)\Big| \lesssim A^{-\alpha_1} \fint_{I_1} |g(\cdot, x_2)|, \qquad x \in \wt I_1 \times \R^{d_2}.
\end{equation}
Applying this to $g = f$ we get
$$
\abs{\tilde f_1(x)} \lesssim A^{-\alpha_1} 1_{\wt I_1}(x_1) \langle |f| \rangle_{I_1,1}(x_2).
$$
The estimate
$$
\abs{\tilde f_2(x)} \lesssim A^{-\alpha_2} \langle |f| \rangle_{I_2,2}(x_1)1_{\wt I_2}(x_2)
$$
is symmetric. Lastly, notice that
$$
\tilde f_3 = \phi_1T_1\Big( \frac{1}{T_1^*\phi_1}\phi_2T_2\Big(\frac{f}{T_2^*\phi_2}\Big) \Big)
= \phi_1T_1\Big( \frac{\tilde f_2}{T_1^*\phi_1}\Big)
$$
so that applying \eqref{eq:eq1} to $g = \tilde f_2$ (which is a legitimate choice) and the estimate of $\tilde f_2$, we get that for $x \in \wt I_1 \times \wt I_2$ there holds
$$
|\tilde f_3(x)| \lesssim  A^{-\alpha_1} \fint_{I_1} |\tilde f_2(y_1, x_2)| \ud y_1 \lesssim A^{-\alpha_1} A^{-\alpha_2} \fint_R |f|.
$$
The claimed $L^{u_1}L^{u_2}$ estimates are clear from the pointwise estimates.
\end{proof}
\begin{rem}\label{rem:modcon}
A much weaker modulus of continuity like in \cite{Hy5} for the kernels $K_1, K_2$ would also work, but we prefer to use the standard estimates here for simplicity.
The weaker assumption for $K_1$ would be
$$
|K_1(x_1, y_1) - K_1(x_1', y_1)| + |K_1(y_1, x_1) - K_1(y_1, x_1')| \le \frac{1}{|x_1-y_1|^{d_1}} \omega\Big( \frac{|x_1-x_1'|}{|x_1-y_1|} \Big)
$$
whenever $|x_1-x_1'| \le |x_1-y_1|/2$. In these lower bound arguments it is only needed that the increasing modulus of continuity
$\omega\colon [0,1) \to [0, \infty)$ satisfies $\omega(t) \to 0$ when $t \to 0$.
\end{rem}

The previous lemma will be used to control the oscillations \eqref{eq:osc} via the following basic observation. The idea
is to decompose the dualizing $f$ like above.
\begin{lem}\label{lem:oscDual}
For $v_1,v_2\in[1,\infty]$, we have
\begin{equation*}
  \osc^{v_1,v_2}(b,R)
  \sim \sup\Big\{\Babs{\int_{R}bf}: \Norm{f}{L^{v_1'}_{x_1,0} L^{v_2'}_{x_2,0} (R)}\leq 1\Big\}.
\end{equation*}
\end{lem}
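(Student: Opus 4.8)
The plan is to recognize $\osc^{v_1,v_2}(b,R)$ as the mixed-norm of a bounded idempotent applied to $b$, and then read the statement off from $L^p$--$L^{p'}$ duality. Introduce the operator
\[
Pb:=b-\langle b\rangle_{I_1,1}-\langle b\rangle_{I_2,2}+\langle b\rangle_R ,
\]
so that $\osc^{v_1,v_2}(b,R)=\Norm{Pb}{L^{v_1}_{x_1}L^{v_2}_{x_2}(R)}$ by definition. I would first record its elementary properties. Writing $P_i=\Id-\langle\,\cdot\,\rangle_{I_i,i}$ for the corresponding operator acting in the $x_i$-variable alone, a one-line Fubini computation gives $P=P_1P_2$; since the averaging operator $g\mapsto\langle g\rangle_{I_i,i}$ has norm $\le 1$ on $L^{w}(I_i)$ for every $w\in[1,\infty]$ (Jensen), $P$ is bounded, with an absolute constant, on $L^{v_1}_{x_1}L^{v_2}_{x_2}(R)$ and on $L^{v_1'}_{x_1}L^{v_2'}_{x_2}(R)$. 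Each $P_i$ is idempotent and they commute, so $P^2=P$. Moreover, for any $f$ one checks by Fubini that $\int_{I_1}(Pf)(\,\cdot\,,x_2)\ud x_1\equiv 0$ and $\int_{I_2}(Pf)(x_1,\,\cdot\,)\ud x_2\equiv 0$, i.e.\ $Pf\in L^{v_1'}_{x_1,0}L^{v_2'}_{x_2,0}(R)$, while conversely $f\in L^{v_1'}_{x_1,0}L^{v_2'}_{x_2,0}(R)$ forces $\langle f\rangle_{I_1,1}=\langle f\rangle_{I_2,2}=\langle f\rangle_R=0$ and hence $Pf=f$. Finally, moving each averaging factor from $b$ onto $g$ by Fubini shows that $P$ is symmetric for the pairing $(b,g)\mapsto\int_R bg$: there holds $\int_R(Pb)g=\int_R b(Pg)$ for $b\in L^1(R)$ and $g\in L^\infty(R)$.

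Granting these, the proof is two short computations. For the inequality bounding the supremum by the oscillation: given $f$ with $\Norm{f}{L^{v_1'}_{x_1,0}L^{v_2'}_{x_2,0}(R)}\le 1$ we have $f=Pf$, so
\[
\Babs{\int_R bf}=\Babs{\int_R b(Pf)}=\Babs{\int_R (Pb)f}\le \Norm{Pb}{L^{v_1}_{x_1}L^{v_2}_{x_2}(R)}\Norm{f}{L^{v_1'}_{x_1}L^{v_2'}_{x_2}(R)}\le\osc^{v_1,v_2}(b,R)
\]
by the mixed-norm Hölder inequality. For the reverse inequality, I would use the duality of mixed-norm Lebesgue spaces on the finite-measure rectangle $R$ --- namely $\Norm{h}{L^{v_1}_{x_1}L^{v_2}_{x_2}(R)}\sim\sup\{\,|\int_R hg|:\Norm{g}{L^{v_1'}_{x_1}L^{v_2'}_{x_2}(R)}\le 1\,\}$ for $h\in L^1(R)$, with both sides allowed to be $+\infty$ --- to pick $g$ with $\Norm{g}{L^{v_1'}_{x_1}L^{v_2'}_{x_2}(R)}\le 1$ and $\int_R(Pb)g\gtrsim\Norm{Pb}{L^{v_1}_{x_1}L^{v_2}_{x_2}(R)}=\osc^{v_1,v_2}(b,R)$. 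Setting $f:=Pg$, we then have $f\in L^{v_1'}_{x_1,0}L^{v_2'}_{x_2,0}(R)$, $\Norm{f}{L^{v_1'}_{x_1}L^{v_2'}_{x_2}(R)}\lesssim\Norm{g}{L^{v_1'}_{x_1}L^{v_2'}_{x_2}(R)}\le 1$, and $\int_R bf=\int_R b(Pg)=\int_R(Pb)g\gtrsim\osc^{v_1,v_2}(b,R)$; dividing by $\Norm{f}{L^{v_1'}_{x_1}L^{v_2'}_{x_2}(R)}$ exhibits an admissible competitor in the supremum, and combining the two inequalities gives the equivalence.

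The argument is entirely elementary --- Fubini, Hölder, and scalar $L^p$--$L^{p'}$ duality combined one coordinate at a time --- and I do not expect any genuine obstacle. The only point requiring a word of care is the endpoint duality: when some $v_i\in\{1,\infty\}$ the dual of $L^{v_1}_{x_1}L^{v_2}_{x_2}(R)$ is not literally $L^{v_1'}_{x_1}L^{v_2'}_{x_2}(R)$, but on a set of finite measure the mixed norm of an $L^1$ function is still recovered, up to an absolute constant, by pairing against the unit ball of the conjugate mixed-norm space (choose coordinatewise near-optimal norming functions and recombine, as in the usual proof that $\Norm{\varphi}{p}=\sup_{\Norm{\psi}{p'}\le1}|\int\varphi\psi|$). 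This is precisely why the statement, and this proof, carry ``$\sim$'' rather than ``$=$'', and it is the only source of the implied constants.
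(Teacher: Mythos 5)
Your proof is correct and takes essentially the same approach as the paper: both rest on mixed-norm duality together with the observation that the projection $Pb = b - \ave{b}_{I_1,1} - \ave{b}_{I_2,2} + \ave{b}_R$ is self-adjoint for the pairing $\int_R bg$, idempotent, and bounded on $L^{v_1'}_{x_1}L^{v_2'}_{x_2}(R)$. One small correction to your closing remark: the duality $\Norm{h}{L^{v_1}_{x_1}L^{v_2}_{x_2}(R)} = \sup\{\,|\int_R hg| : \Norm{g}{L^{v_1'}_{x_1}L^{v_2'}_{x_2}(R)}\le1\,\}$ holds with genuine equality on the finite-measure rectangle even at the endpoints $v_i\in\{1,\infty\}$; the implied constant in the statement comes entirely from the operator norm $\Norm{P}{L^{v_1'}L^{v_2'}\to L^{v_1'}L^{v_2'}}\le 4$, not from any loss in the duality itself.
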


\begin{proof}
By standard dualities, we have
\begin{equation*}
\begin{split}
  \osc^{v_1,v_2}(b,R)
  &=\sup\Big\{\Babs{\iint_{I_1\times I_2}[b-\ave{b}_{I_1,1}-\ave{b}_{I_2,2}+\ave{b}_{R}]g}:\Norm{g}{L^{v_1'}L^{v_2'}(R)}\leq 1\Big\} \\
  &=\sup\Big\{\Babs{\iint_{I_1\times I_2}b[g-\ave{g}_{I_1,1}-\ave{g}_{I_2,2}+\ave{g}_{R}]}:\Norm{g}{L^{v_1'}L^{v_2'}(R)}\leq 1\Big\} \\
  &\leq \sup\Big\{\Babs{\iint_{I_1\times I_2}bg}: \Norm{g}{L^{v_1'}_0 L^{v_2'}_0 (R)}\leq 4\Big\} \\
  &=\sup\Big\{\Babs{\iint_{I_1\times I_2}[b-\ave{b}_{I_1,1}-\ave{b}_{I_2,2}+\ave{b}_{R}]g}: \Norm{g}{L^{v_1'}_0 L^{v_2'}_0 (R)}\leq 4\Big\} \\
  &\leq 4\osc^{v_1,v_2}(b,R),
\end{split}
\end{equation*}
so that all these quantities are actually comparable.
\end{proof}

Motivated by this we define the following truncated oscillation, which is finite even when $b \in L^1_{\loc}$. This is important for technical reasons -- the absorption argument
below requires a finite quantity.
For $b \in L^1_{\loc}$ and $N \in \N$ we define
$$
\osc^{v_1,v_2}_N(b,R)
  := \sup\Big\{\Babs{\iint_{R}bf}: \Norm{f}{L^{v_1'}_{0} L^{v_2'}_{0} (R)}\leq 1 \textup{ and }  \|f\|_{L^{\infty}} \le N \Big\} < \infty.
$$
\begin{rem}\label{rem:osc11}
When $v_1=v_2=1$ -- which is the case of most interest -- we can directly work with $\osc^{1,1}(b,R) \sim \osc^{1,1}_N(b,R)$.
\end{rem}

\begin{lem}\label{lem:truncosc}
Let $K_i$ be a non-degenerate kernel on $\R^{d_i}$, $i = 1,2$.
If we fix $A\geq 3$ in Definition \ref{def:tildeI} large enough (depending on the dimensions and kernel constants),
then for each $b \in L^1_{\loc}$, $1 \le v_i \le \infty$, $N \in \N$ and rectangle $R=I_1\times I_2 \subset\R^d$ we have
\begin{equation*}
\begin{split}
   \osc^{v_1,v_2}_N(b,R)
   \lesssim &\abs{\pair{[T_1,[b,T_2]] h_{0}}{1_{R_3} }} +  \abs{\pair{[T_1^*,[b,T_2]] h_{1}}{1_{R_2} }} \\
&+ \abs{\pair{[T_1,[b,T_2^*]] h_{2}}{1_{R_1} }} + \abs{\pair{[T_1^*,[b,T_2^*]] h_{3}}{1_{R_0} }}
\end{split}
\end{equation*}
for some functions $h_j$ with $\Norm{h_{j}}{L^{v_1'}L^{v_2'}(R_j)}\lesssim 1$ and $\Norm{h_{j}}{L^{\infty}(R_j)}\lesssim N$, where
$R_0=R$, $R_1=\wt I_1\times I_2$, $R_2=I_1\times\wt I_2$, $R_3=\wt R$.

If $K_1$ is \emph{symmetrically} non-degenerate, we get
\begin{equation*}
\begin{split}
   \osc^{v_1,v_2}_N(b,R)
   \lesssim &\abs{\pair{[T_1,[b,T_2]] h_{0}}{1_{R_3} }} +  \abs{\pair{[T_1,[b,T_2]] h_{1}}{1_{R_2} }} \\
&+ \abs{\pair{[T_1^*,[b,T_2^*]] h_{2}}{1_{R_1} }} + \abs{\pair{[T_1^*,[b,T_2^*]] h_{3}}{1_{R_0} }}.
\end{split}
\end{equation*}
A symmetric estimate holds if $K_2$ is \emph{symmetrically} non-degenerate.
If both $K_1$ and $K_2$
are symmetrically non-degenerate, then
\begin{equation}\label{eq:symdom}
   \osc^{v_1,v_2}_N(b,R)
   \lesssim\sum_{j=0}^3 \abs{\pair{[T_1,[b,T_2]]h_j}{1_{\wt R_j}}}.
\end{equation}
\end{lem}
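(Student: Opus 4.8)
The plan is to combine Lemma~\ref{lem:oscDual} (or rather its truncated analogue, which differs only by the extra $L^\infty$ constraint) with the weak factorization of Lemma~\ref{lem:IteratedFactorization}. Fix a rectangle $R = I_1 \times I_2$ and a competitor $f \in L^{v_1'}_0 L^{v_2'}_0(R)$ with $\Norm{f}{L^{v_1'}_0 L^{v_2'}_0(R)} \le 1$ and $\Norm{f}{L^\infty} \le N$. By Lemma~\ref{lem:IteratedFactorization} we may write
\begin{equation*}
f = 1_{\wt R} T_1 T_2 h - T_2 h \cdot T_1^* 1_{\wt R} - T_1 h \cdot T_2^* 1_{\wt R} + h\, T_1^* T_2^* 1_{\wt R} + \sum_{j=1}^3 \tilde f_j,
\end{equation*}
with $\Norm{h}{L^{v_1'}L^{v_2'}} \lesssim A^d$, $\Norm{h}{L^\infty} \lesssim A^d N$, and $\Norm{\tilde f_j}{L^{v_1'}_0 L^{v_2'}_0} \lesssim A^{-\min(\alpha_1,\alpha_2)}$, $\Norm{\tilde f_j}{L^\infty} \lesssim A^{-\min(\alpha_1,\alpha_2)} N$ (the pointwise bounds on $\tilde f_j$ give both). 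The first four terms are exactly the four terms appearing in the pairing identity
\begin{equation*}
\pair{[T_1,[b,T_2]]h}{1_{\wt R}} = \Bpair{b}{\,1_{\wt R} T_1 T_2 h - T_2 h \cdot T_1^* 1_{\wt R} - T_1 h \cdot T_2^* 1_{\wt R} + h\, T_1^* T_2^* 1_{\wt R}\,}
\end{equation*}
noted in the Introduction (with $g = 1_{\wt R}$); here one must check the off-support/disjointness requirements so that the integral representations of $T_1, T_2$ and their adjoints genuinely apply, which is fine since $h$ is supported in $R$, $1_{\wt R}$ in $\wt R$, and $\dist(I_i, \wt I_i) \sim A\ell(I_i)$.

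Carrying this out, I would write
\begin{equation*}
\int_R b f = \pair{[T_1,[b,T_2]] h}{1_{\wt R}} + \sum_{j=1}^3 \int_{R_j} b\, \tilde f_j,
\end{equation*}
and rename $h_0 := h$, $\wt R_0 := \wt R$. For the three error terms I iterate: each $\tilde f_j$ lies in $L^{v_1'}_0 L^{v_2'}_0(R_j)$ with norm $\lesssim A^{-\min(\alpha_1,\alpha_2)}$ and $L^\infty$ norm $\lesssim A^{-\min(\alpha_1,\alpha_2)} N$, so $\Babs{\int_{R_j} b \tilde f_j} \le A^{-\min(\alpha_1,\alpha_2)} \osc^{v_1,v_2}_{N}(b, R_j)$ after normalizing. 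Apply the same factorization to each $R_j$ (now $\wt{\wt I_i} = I_i$ in the symmetric case, so the reflected rectangles $\wt R_j$ cycle among $R, R_1, R_2, R_3$, giving the four indices $j=0,1,2,3$ in the final display). This produces a self-improving inequality of the shape
\begin{equation*}
\osc^{v_1,v_2}_N(b,R) \lesssim \sum_{j=0}^3 \Babs{\pair{[T_1,[b,T_2]] h_j}{1_{\wt R_j}}} + C A^{-\min(\alpha_1,\alpha_2)} \sup_{R'} \osc^{v_1,v_2}_N(b, R'),
\end{equation*}
where the supremum on the right is over the finitely many rectangles generated; since $\osc^{v_1,v_2}_N(b,R') < \infty$ for every rectangle (this is why the truncated oscillation was introduced), choosing $A$ large enough that $C A^{-\min(\alpha_1,\alpha_2)} \le \tfrac12$ lets us absorb the error term into the left-hand side. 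The symmetric non-degeneracy of both $K_1$ and $K_2$ is what lets us replace all the partial adjoints $T_1^*, T_2^*$ by $T_1, T_2$ (acting instead on the reflected configuration), so that only $[T_1,[b,T_2]]$ appears in \eqref{eq:symdom}; without symmetry one gets the mixed versions stated earlier in the lemma, and the absorption argument is identical.

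The main obstacle is organizing the absorption/iteration cleanly: one must track that, over one step, the error rectangles $R_1, R_2, R_3$ have controlled eccentricity and that their own reflected rectangles again belong to a fixed finite family closed under the operation $R \mapsto \{R, R_1, R_2, R_3\}$ — this uses $\wt{\wt I_i} = I_i$ crucially, so that the process does not spawn infinitely many new rectangles. A cleaner route is to not literally iterate but to apply Lemma~\ref{lem:IteratedFactorization} once and bound the $\tilde f_j$ terms directly by $\osc^{v_1,v_2}_N(b,R_j)$, then take a supremum over $R$ in a suitable class and absorb in one shot; either way the finiteness of $\osc^{v_1,v_2}_N$ and the smallness factor $A^{-\min(\alpha_1,\alpha_2)}$ are the two ingredients that make the absorption legitimate. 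A secondary point requiring care is verifying that the dualization in the spirit of Lemma~\ref{lem:oscDual} still produces a competitor $f$ that one is allowed to feed into the factorization (mean-zero in each variable on $R$), which is built into the definition of $\osc^{v_1,v_2}_N$ and so causes no trouble.
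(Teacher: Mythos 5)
Your proposal follows essentially the same route as the paper: dualize the truncated oscillation, apply the bi-parameter approximate weak factorization of Lemma~\ref{lem:IteratedFactorization} to the near-extremal competitor $f$, and absorb the three error oscillations, which are finite (thanks to the truncation) and carry a small factor $A^{-\min(\alpha_1,\alpha_2)}$. The paper does exactly the ``one shot'' version you describe at the end: it writes four coupled inequalities, one for each of $R_0,R_1,R_2,R_3$, sums them, and absorbs.

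One point worth correcting. You write that $\wt{\wt I_i}=I_i$ is ``used crucially so that the process does not spawn infinitely many new rectangles.'' In fact $\wt{\wt I_i}=I_i$ is only arranged in the \emph{symmetrically} non-degenerate case (see Definition~\ref{def:tildeI}), and it is not the mechanism that closes up the family in general. What closes it up, for merely non-degenerate $K_i$, is the observation stated in the paper's proof: if $\wt Q$ is the reflected cube of $Q$ with respect to $K$, then $Q$ is a reflected cube of $\wt Q$ with respect to $K^*$. Thus, when you run the factorization on $R_1=\wt I_1\times I_2$, the reflection of $\wt I_1$ is taken with respect to $K_1^*$ (producing $I_1$, not some new $\wt{\wt I_1}$), and this is exactly why the adjoint operators $T_1^*$, $T_2^*$ appear in the first displayed conclusion of the lemma. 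The role of symmetric non-degeneracy is different: it lets one reflect with respect to $K_i$ itself (or equivalently with $K_i^*$) and still return to $I_i$, which is what allows all four commutator terms in \eqref{eq:symdom} to be $[T_1,[b,T_2]]$ rather than mixed adjoints. With this correction your argument matches the paper's.
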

\begin{proof}
Let $N$ be fixed.
Fix $f\in L^{v_1'}_0 L^{v_2'}_0(R)$ so that $\Norm{f}{L^{v_1'}_{0} L^{v_2'}_{0} (R)}\le 1$, $\|f\|_{L^{\infty}} \le N$
and $\osc^{v_1,v_2}_N(b,R) \lesssim |\pair{b}{f}|$.
We expand this $f$ according to Lemma \ref{lem:IteratedFactorization} to arrive at
\begin{equation*}
\begin{split}
  \pair{b}{f}
  &=\Bpair{b}{1_{\wt R} T_1T_2 h_0-T_2 h_0 \cdot T_1^*1_{\wt R}-T_1h_0\cdot T_2^*1_{\wt R}+h_0 T_1^*T_2^*1_{\wt R}} + \sum_{j=1}^3\pair{b}{\tilde{f}_j} \\
  &=\Bpair{b T_1T_2 h_0- T_1(b T_2 h_0)-T_2(bT_1 h_0)+ T_1T_2(bh_0)}{1_{\wt R} } + \sum_{j=1}^3\pair{b}{\tilde{f}_j} \\
  &=-\pair{[T_1,[b,T_2]] h_0}{1_{\wt R} } + \sum_{j=1}^3\pair{b}{\tilde{f}_j}.
\end{split}
\end{equation*}
Hence, we have
\begin{equation*}
\osc^{v_1,v_2}_N(b,R) \lesssim
  \abs{\pair{b}{f}}
  \leq\abs{\pair{[T_1,[b,T_2]] h_0}{1_{\wt R} }}+\sum_{j=1}^3\abs{\pair{b}{\tilde{f}_j}},
\end{equation*}
where, by Lemma \ref{lem:IteratedFactorization},
\begin{equation*}
  \abs{\pair{b}{\tilde{f}_j}}
  \lesssim \osc^{v_1,v_2}_N(b,R_j)A^{-\min(\alpha_1,\alpha_2)},
\end{equation*}
and thus
\begin{equation}\label{eq:osc1}
  \osc^{v_1,v_2}_N(b,R_0)\lesssim \abs{\pair{[T_1,[b,T_2]] h_0}{1_{R_3} }}+A^{-\min(\alpha_1,\alpha_2)}\sum_{j \in \{1,2,3\}}\osc^{v_1,v_2}_N(b,R_j)
\end{equation}
where $\Norm{h_0}{L^{v_1'}L^{v_2'}(R)}\lesssim A^d$ and $\Norm{h_0}{L^{\infty}(R)}\lesssim A^dN$.

Notice that if $\wt Q$ is a reflected cube of $Q$ with respect to a non-degenerate kernel $K$, then $Q$ is a reflected cube of $\wt Q$ with respect to $K^*$.
Therefore, we also get
\begin{equation}\label{eq:osc2}
  \osc^{v_1,v_2}_N(b,R_1)\lesssim \abs{\pair{[T_1^*,[b,T_2]] h_1}{1_{R_2} }}+A^{-\min(\alpha_1,\alpha_2)}\sum_{j \in \{0, 2,3\}} \osc^{v_1,v_2}_N(b,R_j),
\end{equation}
\begin{equation}\label{eq:osc3}
  \osc^{v_1,v_2}_N(b,R_2)\lesssim \abs{\pair{[T_1,[b,T_2^*]] h_2}{1_{R_1} }}+A^{-\min(\alpha_1,\alpha_2)}\sum_{j \in \{0, 1,3\}} \osc^{v_1,v_2}_N(b,R_j)
\end{equation}
and
\begin{equation}\label{eq:osc4}
  \osc^{v_1,v_2}_N(b,R_3)\lesssim \abs{\pair{[T_1^*,[b,T_2^*]] h_3}{1_{R_0} }}+A^{-\min(\alpha_1,\alpha_2)}\sum_{j \in \{0, 1,2\}} \osc^{v_1,v_2}_N(b,R_j).
\end{equation}
Combining \eqref{eq:osc1}, \eqref{eq:osc2}, \eqref{eq:osc3} and \eqref{eq:osc4}, and fixing $A$ large enough, we obtain by an absorption argument that
\begin{align*}
\osc^{v_1,v_2}_N(b,R)& \le \sum_{j=0}^3 \osc^{v_1,v_2}_N(b,R_j) \\
&\lesssim
\abs{\pair{[T_1,[b,T_2]] h_0}{1_{R_3} }} +  \abs{\pair{[T_1^*,[b,T_2]] h_1}{1_{R_2} }} \\
&+ \abs{\pair{[T_1,[b,T_2^*]] h_2}{1_{R_1} }} + \abs{\pair{[T_1^*,[b,T_2^*]] h_3}{1_{R_0} }},
\end{align*}
where $\Norm{h_j}{L^{v_1'}L^{v_2'}(R_j)}\lesssim 1$ and $\Norm{h_j}{L^{\infty}(R_j)}\lesssim N$.

Next, if $K$ is \emph{symmetrically} non-degenerate and $\wt Q$ is a reflected cube of $Q$ with respect to $K$, then $Q$ is a reflected cube of $\wt Q$ with respect to $K$
and $\wt Q$ is also a reflected cube of $Q$ with respect to $K^*$.
Suppose $K_1$ is symmetrically non-degenerate. Then we can replace \eqref{eq:osc2}
by
\begin{equation}\label{eq:osc2SYM}
  \osc^{v_1,v_2}_N(b,R_1)\lesssim \abs{\pair{[T_1,[b,T_2]] h_1}{1_{R_2} }}+A^{-\min(\alpha_1,\alpha_2)}\sum_{j \in \{0, 2,3\}} \osc^{v_1,v_2}_N(b,R_j),
\end{equation}
and 
\eqref{eq:osc3} by
\begin{equation}\label{eq:osc3SYM}
  \osc^{v_1,v_2}_N(b,R_2)\lesssim \abs{\pair{[T_1^*,[b,T_2^*]] h_2}{1_{R_1} }}+A^{-\min(\alpha_1,\alpha_2)}\sum_{j \in \{0, 1,3\}} \osc^{v_1,v_2}_N(b,R_j)
\end{equation}
The other symmetry statements are similar. This ends our proof.
\end{proof}
From this point on, given non-degenerate kernels $K_i$, we always consider $A$ fixed large enough so that the conclusion of Lemma \ref{lem:truncosc} hold.
\begin{rem}
We have that $[T_1, [b,T_2]]^* = [T_1^*, [b,T_2^*]]$, but a partial adjoint like $[T_1^*, [b,T_2]]$ can more easily be unbounded even if
$[T_1, [b,T_2]]$ is bounded. Thus, it can be useful to be able to eliminate these partial adjoint terms if at least one of the kernels
is symmetrically non-degenerate.
\end{rem}

\subsection{Off-support constants}
Notice that if we take a pairing, such as $\abs{\pair{[T_1,[b,T_2]] h_{0}}{1_{\tilde R} }}$,  from the conclusion of Lemma \ref{lem:truncosc},
then $h_0$ is supported on $I_1 \times I_2$ and $\wt R = \wt I_1 \times \wt I_2$ satisfies $\dist(I_i,\wt I_i) \sim \ell(I_i)$. 
Moreover, the case $v_1 = v_2 = 1$ is a bit special, as then $\|h_j\|_{L^{\infty}(R_j)} \lesssim 1$.
Otherwise, we only have the estimate $\Norm{h_{j}}{L^{v_1'}L^{v_2'}(R_j)} \lesssim 1$ at our disposal. 

We fix $b \in L^1_{\loc}$ and non-degenerate kernels $K_i$ -- the off-support constants will depend on these but this will not be denoted.
We first define off-support constants that are useful in the $v_1 = v_2 = 1$ situation -- which is a case that is encountered often.
For $s_1, s_2, t_1, t_2 \in (1,\infty)$ and $B(x,y) = b(x_1, x_2)-b(x_1, y_2)-b(y_1,x_2)+b(y_1, y_2)$ define
\begin{align*}
\Off_{(s_1, s_2)}^{(t_1, t_2)} =  \sup \frac{1}{|J_1|^{1+1/s_1-1/t_1}}   & \frac{1}{|J_2|^{1+1/s_2-1/t_2}}  \\
& \Big| \iint_{\R^{d}\times \R^d} B(x,y)
K_1(x_1, y_1) K_2(x_2, y_2)f_1(y)f_2(x) \ud y \ud x \Big|,
\end{align*}
where the supremum is taken over rectangles $P_1 = J_1 \times J_2$ and $P_2 = L_1 \times L_2$ with
$$
\ell(J_i) = \ell(L_i) \qquad \textup{and} \qquad
\dist (J_i, L_i)\sim \ell(J_i)
$$
and over functions $f_i \in L^{\infty}(P_i)$ with $\|f_j\|_{L^{\infty}} \le 1$. We also denote
$\Off_{(s_1, s_2)}^{(t_1, t_2), i*}$ the constant where $K_i$ is replaced by $K_i^*$.

If $T_i$ is a non-degenerate CZO with kernel $K_i$, then
$$\Off_{(s_1, s_2)}^{(t_1, t_2)} \le \| [T_1, [b,T_2]]\|_{L^{s_1}_{x_1}L^{s_2}_{x_2} \to L^{t_1}_{x_1}L^{t_2}_{x_2}}$$ whenever the commutator
exists as a bounded linear mapping. This explains why these weaker off-support constants make sense.

\begin{thm}\label{thm:mixednormrec}
Let $K_i$ be a non-degenerate kernel on $\R^{d_i}$, $i = 1,2$, and let $b \in L^1_{\loc}$. Let $p_1, p_2, q_1, q_2 \in (1,\infty)$. Then we have
for all rectangles $R=I_1\times I_2$ that
\begin{align*}
\frac{ \osc^{1,1}(b,R) }{|R|}
&\lesssim  \Big( \Off_{(p_1, p_2)}^{(q_1, q_2)} +  \Off_{(s_1, s_2)}^{(t_1, t_2), 1*}\Big) |I_1|^{\frac 1{p_1}-\frac 1{q_1}}|I_2|^{\frac 1{p_2}-\frac 1{q_2}}
\end{align*}
with any $s_i, t_i \in (1,\infty)$ satisfying $1/p_i - 1/q_i = 1/s_i - 1/t_i$, $i=1,2$.

If, in addition, one of the kernels $K_i$ is symmetrically non-degenerate,
then
$$
\frac{ \osc^{1,1}(b,R) }{|R|}
\lesssim \Off_{(p_1, p_2)}^{(q_1, q_2)}  |I_1|^{\frac 1{p_1}-\frac 1{q_1}}|I_2|^{\frac 1{p_2}-\frac 1{q_2}}.
$$
\end{thm}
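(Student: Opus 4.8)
The plan is to combine Lemma~\ref{lem:truncosc} (in the form \eqref{eq:symdom}, or the two-partial-adjoint version when no symmetry is assumed) with a scaling computation that converts each commutator pairing into an off-support constant. By Remark~\ref{rem:osc11} we may work with $\osc^{1,1}(b,R)\sim\osc^{1,1}_N(b,R)$ for some fixed $N$, so that Lemma~\ref{lem:truncosc} applies and produces, in the non-symmetric case,
\begin{equation*}
\osc^{1,1}(b,R)\lesssim \abs{\pair{[T_1,[b,T_2]]h_0}{1_{R_3}}}+\abs{\pair{[T_1^*,[b,T_2]]h_1}{1_{R_2}}}+\abs{\pair{[T_1,[b,T_2^*]]h_2}{1_{R_1}}}+\abs{\pair{[T_1^*,[b,T_2^*]]h_3}{1_{R_0}}},
\end{equation*}
with $\|h_j\|_{L^\infty(R_j)}\lesssim 1$ (this is the special feature of the $v_1=v_2=1$ case) and $R_j$ the four rectangles built from $I_i$ and $\wt I_i$. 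When one kernel is symmetrically non-degenerate, \eqref{eq:symdom} collapses this to a sum of four pairings involving only $[T_1,[b,T_2]]$ itself (over rectangles $\wt R_j$), which is what yields the cleaner second inequality.

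Next I would expand each pairing $\pair{[T_1,[b,T_2]]h}{1_{R'}}$ using the kernel representation. Since $h$ is supported on one rectangle $R''=I_1''\times I_2''$ and $1_{R'}$ on a disjoint rectangle $R'=I_1'\times I_2'$ with $\dist(I_i',I_i'')\sim\ell(I_i)$, the four-term identity
$$
\pair{[T_1,[b,T_2]]h}{g}=\pair{b}{T_2h\cdot T_1^*g-h\cdot T_1^*T_2^*g-T_1T_2h\cdot g+T_1h\cdot T_2^*g}
$$
telescopes, after inserting the definition of each $T_i$, into a single integral against the second difference $B(x,y)=b(x_1,x_2)-b(x_1,y_2)-b(y_1,x_2)+b(y_1,y_2)$ times $K_1(x_1,y_1)K_2(x_2,y_2)$ — this is exactly the integrand in the definition of $\Off$. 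Concretely one checks that
$$
\pair{[T_1,[b,T_2]]h}{1_{R'}}=\iint B(x,y)K_1(x_1,y_1)K_2(x_2,y_2)h(y)1_{R'}(x)\ud y\ud x,
$$
and since $\|h\|_\infty\lesssim 1$ and $\|1_{R'}\|_\infty=1$ on rectangles with matching side lengths and comparable distances, this pairing is bounded by $|I_1|^{1+1/p_1-1/q_1}|I_2|^{1+1/p_2-1/q_2}\,\Off_{(p_1,p_2)}^{(q_1,q_2)}$ (and similarly, for the partial-adjoint terms, by the corresponding $\Off^{\cdot,i*}$ constant, with the free choice of $s_i,t_i$ subject to $1/p_i-1/q_i=1/s_i-1/t_i$, since only the difference of reciprocals enters the normalization). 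Because $\ell(I_i')=\ell(I_i'')\sim\ell(I_i)$, all powers of $|I_i|$ agree up to constants with those of the original rectangle $R$, so after dividing by $|R|=|I_1||I_2|$ one lands on the claimed $|I_1|^{1/p_1-1/q_1}|I_2|^{1/p_2-1/q_2}$.

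Assembling, the non-symmetric bound follows by summing the four pairings, noting that the two genuine-adjoint terms feed the $\Off^{\cdot,1*}$ constant (one may further absorb the $2*$ and $12*$ variants into $1*$ after relabeling, or simply note symmetry in the two coordinates allows choosing which kernel to adjoint); the symmetric case uses \eqref{eq:symdom} instead and only $\Off_{(p_1,p_2)}^{(q_1,q_2)}$ appears. The main obstacle I anticipate is purely bookkeeping: matching, for each of the four (or eight) terms, the precise rectangle over which $h_j$ lives and the precise rectangle $R_j$ against which it is paired to the pair $(P_1,P_2)$ appearing in the $\Off$ supremum, and verifying that the side-length and distance constraints $\ell(J_i)=\ell(L_i)$, $\dist(J_i,L_i)\sim\ell(J_i)$ are met with constants independent of $R$ — this is where the construction of $\wt I_i$ in Definition~\ref{def:tildeI} and the estimate $\dist(I_i,\wt I_i)\sim A\ell(I_i)$ are used. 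The telescoping of the four-term commutator expansion into the single second-difference integrand is a short computation but must be done carefully so that no boundary or diagonal terms are left over; this is legitimate precisely because all supports in sight are pairwise separated.
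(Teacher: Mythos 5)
Your proposal is correct and follows essentially the same route as the paper: apply Lemma~\ref{lem:truncosc} with $v_1=v_2=1$ (so the $h_j$ are $L^\infty$-normalized), telescope each of the resulting four pairings into the single second-difference integral against $K_1K_2$, and read off the appropriate off-support constant after checking the side-length/distance constraints and $|I_i|$ powers; the symmetric case uses \eqref{eq:symdom} in place of the general form. The only place where you are slightly less precise than you could be is the mechanism by which the $2*$ and $12*$ variants collapse into $1*$ and the plain $\Off$ constant — this is the change of variables $x\leftrightarrow y$, which leaves $B$ invariant and merely swaps $(P_1,f_1)\leftrightarrow(P_2,f_2)$ in the supremum defining $\Off$, so $\Off^{\cdot,2*}=\Off^{\cdot,1*}$ and $\Off^{\cdot,12*}=\Off$; but this is exactly the same implicit step hidden behind the paper's ``Clearly, we have.''
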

\begin{proof}
By Lemma \ref{lem:truncosc} we have for each rectangle $R=I_1\times I_2 \subset\R^d$ that
\begin{equation*}
\begin{split}
   \osc^{1,1}(b,R)
   \lesssim &\abs{\pair{[T_1,[b,T_2]] h_{0}}{1_{R_3} }} +  \abs{\pair{[T_1^*,[b,T_2]] h_{1}}{1_{R_2} }} \\
&+ \abs{\pair{[T_1,[b,T_2^*]] h_{2}}{1_{R_1} }} + \abs{\pair{[T_1^*,[b,T_2^*]] h_{3}}{1_{R_0} }}
\end{split}
\end{equation*}
for some functions $h_j$ with $\Norm{h_{j}}{L^{\infty}(R_j)}\lesssim 1$, where
$R_0=R$, $R_1=\wt I_1\times I_2$, $R_2=I_1\times\wt I_2$, $R_3=\wt R$.
Clearly, we have
$$
\abs{\pair{[T_1,[b,T_2]] h_{0}}{1_{R_3} }} + \abs{\pair{[T_1^*,[b,T_2^*]] h_{3}}{1_{R_0} }} \lesssim  \Off_{(p_1, p_2)}^{(q_1, q_2)} |R| |I_1|^{\frac 1{p_1}-\frac 1{q_1}}|I_2|^{\frac 1{p_2}-\frac 1{q_2}}
$$
and
$$
\abs{\pair{[T_1^*,[b,T_2]] h_{1}}{1_{R_2} }} + \abs{\pair{[T_1,[b,T_2^*]] h_{2}}{1_{R_1} }} \lesssim \Off_{(s_1, s_2)}^{(t_1, t_2), 1*} |R| |I_1|^{\frac 1{p_1}-\frac 1{q_1}}|I_2|^{\frac 1{p_2}-\frac 1{q_2}}.
$$

The symmetry claim follows from the symmetry claims of Lemma \ref{lem:truncosc}.
\end{proof}

We now define larger off-support constants that can be used to control $\osc^{v_1, v_2}_{N}(b,R)$, uniformly on $N$, for general $v_1, v_2$.
For $s_1, s_2, t_1, t_2 \in (1,\infty)$ define
\begin{align*}
\wt{\Off}_{(s_1, s_2)}^{(t_1, t_2)} =  \sup \Big| \iint_{\R^{d}\times \R^d} B(x,y)
K_1(x_1, y_1) K_2(x_2, y_2)f_1(y)f_2(x) \ud y \ud x \Big|,
\end{align*}
where the supremum is taken over rectangles $P_1 = J_1 \times J_2$ and $P_2 = L_1 \times L_2$ with
$$
\ell(J_i) = \ell(L_i) \qquad \textup{and} \qquad
\dist (J_i, L_i)\sim \ell(J_i)
$$
and over functions $f_j \in L^{\infty}(P_j)$ with
$$
\|f_1\|_{L^{s_1}_{x_1}L^{s_2}_{x_2}} \le 1 \qquad \textup{and} \qquad  \|f_2\|_{L^{t_1'}_{x_1}L^{t_2'}_{x_2}} \le 1.
$$
We also define the dual off-support constants $\wt{\Off}_{(s_1, s_2)}^{(t_1, t_2), 1*}$, $\wt{\Off}_{(s_1, s_2)}^{(t_1, t_2), 2*}$ and
$\wt{\Off}_{(s_1, s_2)}^{(t_1, t_2), *}$ in the natural way.
Notice that $\Off_{(s_1, s_2)}^{(t_1, t_2)} \le \wt \Off_{(s_1, s_2)}^{(t_1, t_2)}$, and if $T_i$ is a non-degenerate CZO with kernel $K_i$, then
$\wt \Off_{(s_1, s_2)}^{(t_1, t_2)} \le \| [T_1, [b,T_2]]\|_{L^{s_1}_{x_1}L^{s_2}_{x_2} \to L^{t_1}_{x_1}L^{t_2}_{x_2}}$ whenever the commutator
exists as a bounded linear mapping.
Notice also that
$\wt{\Off}_{(s_1, s_2)}^{(t_1, t_2), *} = \wt{\Off}_{(t_1', t_2')}^{(s_1', s_2')}$.
\begin{thm}\label{thm:mixednormrecV2}
Let $K_i$ be a non-degenerate kernel on $\R^{d_i}$, $i = 1,2$, and let $b \in L^1_{\loc}$. Let $p_1, p_2, q_1, q_2 \in (1,\infty)$. Then we have
for all rectangles $R=I_1\times I_2$ that
\begin{align*}
\frac{\osc^{p_1', p_2'}(b,R)}{|I_1|^{1/p_1'}|I_2|^{1/p_2'}}
\lesssim \Big(\wt \Off_{(p_1, p_2)}^{(q_1, q_2)}+\wt \Off_{(p_1, p_2)}^{(q_1, q_2), 1*}
 + \wt \Off_{(p_1, p_2)}^{(q_1, q_2), 2*}+ \wt \Off_{(q_1', q_2')}^{(p_1', p_2')}
\Big) |I_1|^{\frac 1{p_1}-\frac 1{q_1}}|I_2|^{\frac 1{p_2}-\frac 1{q_2}}.
\end{align*}
If one of the kernels $K_i$ is symmetrically non-degenerate, we can drop the terms  $\wt \Off_{(p_1, p_2)}^{(q_1, q_2), 1*}$
and $\wt \Off_{(p_1, p_2)}^{(q_1, q_2), 2*}$, and if
both $K_1$ and $K_2$ are symmetrically non-degenerate, then
$$
\frac{\osc^{p_1', p_2'}(b,R)}{|I_1|^{1/p_1'}|I_2|^{1/p_2'}}
\lesssim \wt \Off_{(p_1, p_2)}^{(q_1, q_2)} |I_1|^{\frac 1{p_1}-\frac 1{q_1}}|I_2|^{\frac 1{p_2}-\frac 1{q_2}}.
$$
\end{thm}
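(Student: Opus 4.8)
The plan is to mirror the proof of Theorem~\ref{thm:mixednormrec}, but working with the truncated oscillation $\osc^{p_1',p_2'}_N(b,R)$ in place of $\osc^{1,1}(b,R)$ and with the larger constants $\wt{\Off}$ in place of $\Off$, and only at the very end letting $N\to\infty$. First I would note that, since each $p_i\in(1,\infty)$, bounded mean-zero functions are dense in $L^{p_1}_{x_1,0}L^{p_2}_{x_2,0}(R)$ (truncate and re-center), so that by Lemma~\ref{lem:oscDual} one has $\osc^{p_1',p_2'}(b,R)=\sup_{N}\osc^{p_1',p_2'}_N(b,R)$; hence it suffices to bound $\osc^{p_1',p_2'}_N(b,R)$ by a constant that does not depend on $N$. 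I would then feed $v_i=p_i'$ (so $v_i'=p_i$) into Lemma~\ref{lem:truncosc}, which dominates $\osc^{p_1',p_2'}_N(b,R)$ by a sum of four pairings, one for each of the commutators $[T_1,[b,T_2]]$, $[T_1^*,[b,T_2]]$, $[T_1,[b,T_2^*]]$, $[T_1^*,[b,T_2^*]]$, against indicators $1_{R_k}$ of the rectangles $R_0=R$, $R_1=\wt I_1\times I_2$, $R_2=I_1\times\wt I_2$, $R_3=\wt R$, with functions $h_j$ supported on these same rectangles satisfying $\Norm{h_j}{L^{p_1}_{x_1}L^{p_2}_{x_2}(R_j)}\lesssim 1$ and $\Norm{h_j}{L^\infty(R_j)}\lesssim N$.

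Next I would unwind each pairing via the bi-parameter four-point identity $\pair{[T_1,[b,T_2]]f}{g}=\iint B(x,y)K_1(x_1,y_1)K_2(x_2,y_2)f(y)g(x)\ud y\ud x$, together with its variants in which $K_1$ and/or $K_2$ is replaced by $K_i^*$; this is licit because $f=h_j$ and $g=1_{R_k}$ have, in each coordinate, supports that are $\wt I_i$ versus $I_i$, hence of equal sidelength and at distance $\sim\ell(I_i)$ -- precisely the geometry in the definition of $\wt{\Off}$. I would normalize so that $h_j$ is (up to a harmless $\lesssim 1$ factor) an admissible first argument and $1_{R_k}/(\abs{I_1}^{1/q_1'}\abs{I_2}^{1/q_2'})$ an admissible second argument, using $\Norm{1_{R_k}}{L^{q_1'}_{x_1}L^{q_2'}_{x_2}}=\abs{I_1}^{1/q_1'}\abs{I_2}^{1/q_2'}$. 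Reading off which kernels carry transposes in each pairing, the four terms are then bounded by $\wt{\Off}_{(p_1,p_2)}^{(q_1,q_2)}$, $\wt{\Off}_{(p_1,p_2)}^{(q_1,q_2),1*}$, $\wt{\Off}_{(p_1,p_2)}^{(q_1,q_2),2*}$ and $\wt{\Off}_{(p_1,p_2)}^{(q_1,q_2),*}=\wt{\Off}_{(q_1',q_2')}^{(p_1',p_2')}$ respectively, each multiplied by $\abs{I_1}^{1/q_1'}\abs{I_2}^{1/q_2'}$. The last step is the exponent identity $1/q_i'=1/p_i'+(1/p_i-1/q_i)$, i.e. $\abs{I_i}^{1/q_i'}=\abs{I_i}^{1/p_i'}\abs{I_i}^{1/p_i-1/q_i}$, which repackages the geometric factor as $\abs{I_1}^{1/p_1'}\abs{I_2}^{1/p_2'}\cdot\abs{I_1}^{1/p_1-1/q_1}\abs{I_2}^{1/p_2-1/q_2}$; dividing and sending $N\to\infty$ gives the first inequality.

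For the symmetric statements I would simply invoke the corresponding variants of Lemma~\ref{lem:truncosc}: if $K_1$ (say) is symmetrically non-degenerate, the two partial-adjoint pairings are rewritten so that only $[T_1,[b,T_2]]$ and $[T_1^*,[b,T_2^*]]$ survive, which removes $\wt{\Off}_{(p_1,p_2)}^{(q_1,q_2),1*}$ and $\wt{\Off}_{(p_1,p_2)}^{(q_1,q_2),2*}$; and if both $K_1,K_2$ are symmetrically non-degenerate, then \eqref{eq:symdom} expresses $\osc^{p_1',p_2'}_N(b,R)$ through pairings $\pair{[T_1,[b,T_2]]h_j}{1_{\wt R_j}}$ only, where $R_j$ and $\wt R_j$ are coordinatewise reflections of equal sidelength at distance $\sim\ell$, so each such pairing is controlled by $\wt{\Off}_{(p_1,p_2)}^{(q_1,q_2)}\abs{I_1}^{1/q_1'}\abs{I_2}^{1/q_2'}$ exactly as above.

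I do not anticipate a conceptual obstacle: the skeleton is identical to Theorem~\ref{thm:mixednormrec}, and the real work is bookkeeping -- pairing each of the four adjoint patterns with the correct one of the four $\wt{\Off}$ variants, keeping the mixed-norm normalizations of $h_j$ and $1_{R_k}$ consistent so that the residual geometric factor is exactly $\abs{I_1}^{1/q_1'}\abs{I_2}^{1/q_2'}$, and checking that the resulting bound is genuinely uniform in $N$ (it is, since the $\wt{\Off}$ constants do not involve $N$) so that the passage to $\osc^{p_1',p_2'}(b,R)$ is legitimate.
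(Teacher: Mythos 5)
Your proposal is correct and follows essentially the same route as the paper's own (very terse) proof, which simply invokes Lemma~\ref{lem:truncosc} with $v_i=p_i'$ and matches the four pairings to the four $\wt\Off$ variants exactly as you describe, including the exponent identity $1/q_i'=1/p_i'+(1/p_i-1/q_i)$ and the reformulation $\wt\Off_{(p_1,p_2)}^{(q_1,q_2),*}=\wt\Off_{(q_1',q_2')}^{(p_1',p_2')}$. The only point worth spelling out a bit more carefully is the passage $\osc^{p_1',p_2'}(b,R)=\sup_N\osc^{p_1',p_2'}_N(b,R)$: rather than truncating the test function $f$ and worrying about dominated convergence of $\int bf$ (problematic when $b$ is only $L^1_{\loc}$), it is cleaner to truncate $\tilde b:=b-\ave{b}_{I_1,1}-\ave{b}_{I_2,2}+\ave{b}_R$ by setting $\tilde b_M=\tilde b\,1_{\{\abs{\tilde b}\le M\}}$, use monotone convergence $\Norm{\tilde b}{L^{p_1'}L^{p_2'}(R)}=\lim_M\Norm{\tilde b_M}{L^{p_1'}L^{p_2'}(R)}$, compute each $\Norm{\tilde b_M}{L^{p_1'}L^{p_2'}(R)}$ by duality against bounded $h$, and observe via self-adjointness of the recentering operator that $\int_R\tilde b_M h=\int_R b\,\wt F$ where $\wt F$ is the recentering of $1_{\{\abs{\tilde b}\le M\}}h$, which is bounded, mean-zero in both slots, and has controlled $L^{p_1}L^{p_2}$ norm -- hence $\abs{\int_R\tilde b_Mh}\lesssim\osc^{p_1',p_2'}_N(b,R)$ for suitable $N$, yielding the claimed identity.
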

\begin{proof}
Follows directly from Lemma \ref{lem:truncosc} similarly to Theorem \ref{thm:mixednormrec}.
\end{proof}

\section{Necessary conditions for commutator boundedness}\label{sec:nes}
For many of the commutator lower bounds (necessary conditions) we have already done the main work; oscillatory chararacterizations of function spaces
from Section \ref{sec:OscCharofSpaces} combined with the control of oscillations by off-support constants, Theorem \ref{thm:mixednormrec} and Theorem \ref{thm:mixednormrecV2}, will yield the results.
However, when there is an $\dot L^r$ norm involved, we will still have to use Lemma \ref{lem:truncosc} directly, and we will also need to introduce
yet another off-support constant.

Many of our necessary conditions are optimal; we will later
be able to prove the sufficiency of the obtained necessary condition, see Theorem \ref{thm:main}. A reason, but not the only reason, a lower bound
might not be optimal is that
the techniques used to obtain the lower bounds cannot distinguish between symmetric cases like
$$
p_1 < q_1 \textup{ and } p_2 = q_2
$$
and
$$
p_1 = q_1 \textup{ and } p_2 < q_2,
$$
and yield symmetric lower bounds: in the first case we can bound $\|b\|_{\dot C^{0, \beta_1}_{x_1}(\BMO_{x_2})}$ and in the second case $\|b\|_{\dot C^{0, \beta_2}_{x_2}(\BMO_{x_1})}$
by $\|[T_1, [b,T_2]]\|_{L^{p_1}_{x_1} L^{p_2}_{x_2} \to L^{q_1}_{x_1} L^{q_2}_{x_2}}$. Somewhat strikingly, both of these conditions happen to be optimal.
However, e.g. in the upper right corner of the table of Theorem \ref{thm:main}, where $p_1 > q_1$ and $p_2 < q_2$, the conditions do not match, while
in the symmetric case $p_1 < q_1$ and $p_2 > q_2$ (the lower left corner) they do match.

To have clean statements and to avoid repetition, we will
state all of the commutator lower bounds for \emph{symmetrically} non-degenerate kernels $K_i$. From the results of Section \ref{sec:WF}
it is clear that this is not necessary -- the general case just requires various dual off-support constants, and, as we saw, often it is enough that only
one of the kernels is symmetrically non-degenerate to get clean statements.

Sometimes we only obtain a necessary condition in terms of an oscillatory condition that we cannot relate to a function space norm. These
are recorded in this section but are not visible in the table of Theorem \ref{thm:main} (the three cases on the lower right corner).

In what follows we always have $p_1, p_2, q_1, q_2 \in (1,\infty)$, and
\begin{equation*}
\begin{split}
  \beta_i :=d_i\Big(\frac{1}{p_i}-\frac{1}{q_i}\Big),\quad \text{if}\quad p_i<q_i;\qquad
  \frac{1}{r_i} :=\frac{1}{q_i}-\frac{1}{p_i},\quad \text{if}\quad p_i>q_i.
\end{split}
\end{equation*}
Moreover, we also let $b \in L^1_{\loc}$ and $K_i$ be a symmetrically non-degenerate kernel on $\R^{d_i}$, $i = 1,2$. This data
is not repeated in the statements of the results.

\subsection{The condition $\dot C^{0,\beta_1}(\dot C^{0,\beta_2})$}
\begin{prop}\label{prop:cbeta12}
If $p_1 < q_1$ and $p_2 < q_2$, then
$$
\|b\|_{\dot C^{0,\beta_1}(\dot C^{0,\beta_2})}
\lesssim \Off_{(p_1, p_2)}^{(q_1, q_2)}.
$$
\end{prop}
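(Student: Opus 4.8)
The plan is to combine the oscillatory characterization of the bi-parameter H\"older space from Proposition~\ref{prop:calphacbeta} with the control of oscillations by the off-support constant from Theorem~\ref{thm:mixednormrec}. By Proposition~\ref{prop:calphacbeta} applied with $\alpha = \beta_1$ and $\beta = \beta_2$, we have
\begin{equation*}
\|b\|_{\dot C^{0,\beta_1}(\dot C^{0,\beta_2})} \sim \sup_{R = I_1 \times I_2} \frac{1}{\ell(I_1)^{\beta_1}\ell(I_2)^{\beta_2}} \frac{\osc^{1,1}(b,R)}{|R|},
\end{equation*}
so it suffices to bound the right-hand side by $\Off_{(p_1,p_2)}^{(q_1,q_2)}$. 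Since the kernels are assumed symmetrically non-degenerate, the second (symmetric) conclusion of Theorem~\ref{thm:mixednormrec} gives directly
\begin{equation*}
\frac{\osc^{1,1}(b,R)}{|R|} \lesssim \Off_{(p_1,p_2)}^{(q_1,q_2)} \, |I_1|^{\frac{1}{p_1}-\frac{1}{q_1}} |I_2|^{\frac{1}{p_2}-\frac{1}{q_2}}
\end{equation*}
for every rectangle $R = I_1 \times I_2$.

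First I would record the exponent bookkeeping: for $p_i < q_i$ one has $|I_i|^{1/p_i - 1/q_i} = \ell(I_i)^{d_i(1/p_i - 1/q_i)} = \ell(I_i)^{\beta_i}$, precisely by the definition $\beta_i := d_i(1/p_i - 1/q_i)$ from the running hypotheses. Substituting this into the bound from Theorem~\ref{thm:mixednormrec} yields
\begin{equation*}
\frac{1}{\ell(I_1)^{\beta_1}\ell(I_2)^{\beta_2}} \frac{\osc^{1,1}(b,R)}{|R|} \lesssim \Off_{(p_1,p_2)}^{(q_1,q_2)},
\end{equation*}
uniformly over all rectangles $R$. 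Taking the supremum over $R$ and invoking Proposition~\ref{prop:calphacbeta} one more time gives $\|b\|_{\dot C^{0,\beta_1}(\dot C^{0,\beta_2})} \lesssim \Off_{(p_1,p_2)}^{(q_1,q_2)}$, as claimed.

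There is essentially no genuine obstacle here: the heavy lifting has already been done in Section~\ref{sec:WF} (the weak factorization of Lemma~\ref{lem:IteratedFactorization}, the absorption argument of Lemma~\ref{lem:truncosc}, and the passage to the off-support constant in Theorem~\ref{thm:mixednormrec}) and in Section~\ref{sec:OscCharofSpaces} (the oscillatory characterization in Proposition~\ref{prop:calphacbeta}). The only point requiring any care is making sure the two exponent conventions line up — that the $\beta_i$ appearing in the definition of the H\"older norm are the same as the powers of $|I_i|$ produced by the off-support estimate — which is immediate from the definition $\beta_i = d_i(1/p_i - 1/q_i)$. One should also note that the symmetric non-degeneracy hypothesis is exactly what lets us use the cleaner version of Theorem~\ref{thm:mixednormrec} without dual off-support constants; if only plain non-degeneracy were assumed, the statement would instead carry an extra $\Off_{(s_1,s_2)}^{(t_1,t_2),1*}$ term, in line with the discussion preceding the proposition.
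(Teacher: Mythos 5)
Your proof is correct and follows essentially the same route as the paper: combine the oscillatory characterization of $\dot C^{0,\beta_1}(\dot C^{0,\beta_2})$ from Proposition~\ref{prop:calphacbeta} with the symmetric conclusion of Theorem~\ref{thm:mixednormrec}, and then check the exponent bookkeeping $|I_i|^{1/p_i-1/q_i}=\ell(I_i)^{\beta_i}$. Your added remarks on the role of the symmetric non-degeneracy hypothesis are also accurate and consistent with the paper's running assumptions in Section~\ref{sec:nes}.
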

\begin{proof}
By Proposition \ref{prop:calphacbeta} we have
$$
\|b\|_{\dot C^{0,\beta_1}(\dot C^{0,\beta_2})}
  \sim\sup\Big\{ \frac{1}{\ell(I_1)^{\beta_1}\ell(I_2)^{\beta_2}}\frac{\osc^{1,1}(b, I_1 \times I_2)}{|I_1| |I_2|}:
      I_i \subset\R^{d_i} \textup{ is a cube}\Big\},
$$
where, by Theorem \ref{thm:mixednormrec},
\begin{align*}
\frac{1}{\ell(I_1)^{\beta_1}\ell(I_2)^{\beta_2}}\frac{\osc^{1,1}(b, I_1 \times I_2)}{|I_1| |I_2|}
\lesssim \frac{1}{\ell(I_1)^{\beta_1}\ell(I_2)^{\beta_2}} \Off_{(p_1, p_2)}^{(q_1, q_2)}  |I_1|^{\frac 1{p_1}-\frac 1{q_1}}|I_2|^{\frac 1{p_2}-\frac 1{q_2}} 
=  \Off_{(p_1, p_2)}^{(q_1, q_2)}.
\end{align*}
\end{proof}

\subsection{The condition $\dot C^{0,\beta}(\BMO)$}

\begin{prop}\label{prop:CBMO}
If $p_1 < q_1$ and $p_2 = q_2$, then
$$
\|b\|_{\dot C^{0, \beta_1}_{x_1}(\BMO_{x_2})} \lesssim \Off_{(p_1, p_2)}^{(q_1, p_2)}.
$$
If $p_1 = q_1$ and $p_2 < q_2$, then
$$
\|b\|_{\dot C^{0, \beta_2}_{x_2}(\BMO_{x_1})} \lesssim \Off_{(p_1, p_2)}^{(p_1, q_2)}.
$$
\end{prop}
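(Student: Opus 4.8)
The plan is to read off both estimates directly from the oscillatory characterization of Proposition~\ref{prop:CBMOchar} together with the control of oscillations by off-support constants in Theorem~\ref{thm:mixednormrec}. Since in this section the kernels $K_i$ are standing-assumed symmetrically non-degenerate, Theorem~\ref{thm:mixednormrec} supplies, for every rectangle $R = I_1 \times I_2$, the clean bound
$$
\frac{\osc^{1,1}(b,R)}{|R|} \lesssim \Off_{(p_1,p_2)}^{(q_1,q_2)}\, |I_1|^{1/p_1-1/q_1}\, |I_2|^{1/p_2-1/q_2}.
$$
Recall that for a cube $I_i\subset\R^{d_i}$ one has $|I_i|^{1/p_i-1/q_i} = \ell(I_i)^{\beta_i}$ whenever $p_i<q_i$, while $|I_i|^{1/p_i - 1/q_i} = 1$ when $p_i = q_i$.

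For the first statement I would take $p_1<q_1$ and $p_2=q_2$. Then the $|I_2|$-factor above is trivial and the $|I_1|$-factor equals $\ell(I_1)^{\beta_1}$, so dividing by $\ell(I_1)^{\beta_1}$ gives
$$
\frac{1}{\ell(I_1)^{\beta_1}}\,\frac{\osc^{1,1}(b,R)}{|R|} \lesssim \Off_{(p_1,p_2)}^{(q_1,p_2)}
$$
uniformly in $R$. Taking the supremum over all rectangles $R = I_1\times I_2$ and invoking Proposition~\ref{prop:CBMOchar} with $\alpha=\beta_1$ yields $\|b\|_{\dot C^{0,\beta_1}_{x_1}(\BMO_{x_2})}\lesssim \Off_{(p_1,p_2)}^{(q_1,p_2)}$.

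For the second statement I would first record the coordinate-swapped form of Proposition~\ref{prop:CBMOchar}, namely
$$
\|b\|_{\dot C^{0,\beta_2}_{x_2}(\BMO_{x_1})} \sim \sup\Big\{ \frac{1}{\ell(I_2)^{\beta_2}}\,\frac{\osc^{1,1}(b,R)}{|R|} : R = I_1\times I_2 \textup{ rectangle}\Big\},
$$
whose proof is verbatim that of Proposition~\ref{prop:CBMOchar} after interchanging the subscripts $1$ and $2$ throughout. Now with $p_1=q_1$ and $p_2<q_2$, Theorem~\ref{thm:mixednormrec} gives $\osc^{1,1}(b,R)/|R| \lesssim \Off_{(p_1,p_2)}^{(p_1,q_2)}\,\ell(I_2)^{\beta_2}$; dividing by $\ell(I_2)^{\beta_2}$ and taking the supremum finishes the argument.

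There is no genuine obstacle here — the proof is pure bookkeeping resting on the machinery already set up — and the only point worth a sentence is the mirror form of Proposition~\ref{prop:CBMOchar} used in the second case, which is completely routine.
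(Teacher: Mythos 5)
Your proof is correct and is essentially the paper's own argument: apply Proposition~\ref{prop:CBMOchar} (and its coordinate-swapped twin) to express the norm as an oscillatory supremum, then feed in Theorem~\ref{thm:mixednormrec} and use $|I_i|^{1/p_i-1/q_i}=\ell(I_i)^{\beta_i}$. The paper handles the second case simply by invoking symmetry; you spelled it out, but there is no substantive difference.
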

\begin{proof}
Let $p_1 < q_1$ and $p_2 = q_2$. By Proposition \ref{prop:CBMOchar} we have
$$
  \Norm{b}{\dot C^{0,\beta_1}_{x_1}(\BMO_{x_2})}
  \sim\sup\Big\{ \frac{1}{\ell(I_1)^{\beta_1}}\frac{\osc^{1,1}(b, I_1 \times I_2)}{|I_1| |I_2|}:
     I_i \subset\R^{d_i} \textup{ is a cube}\Big\},
$$
where, by Theorem \ref{thm:mixednormrec},
\begin{align*}
\frac{1}{\ell(I_1)^{\beta_1}}\frac{\osc^{1,1}(b, I_1 \times I_2)}{|I_1| |I_2|} 
\lesssim \frac{1}{\ell(I_1)^{\beta_1}} \Off_{(p_1, p_2)}^{(q_1, p_2)}  |I_1|^{\frac 1{p_1}-\frac 1{q_1}} = \Off_{(p_1, p_2)}^{(q_1, p_2)}.
\end{align*}
The proof of the case $p_1 = q_1$ and $p_2 < q_2$ is symmetric.
\end{proof}

\subsection{The condition $\BMO_{\operatorname{rect}}$}
We record the following lower bound concerning the diagonal of Theorem \ref{thm:main}, even though our real focus is on the off-diagonal cases.
\begin{prop}\label{prop:RectBMO}
If $p_1 = q_1$ and $p_2 = q_2$ we have
$$
\|b\|_{\BMO_{\textup{rect}}(p_1', p_2')}
\lesssim \wt \Off_{(p_1, p_2)}^{(p_1, p_2)}
$$
\end{prop}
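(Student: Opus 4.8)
The plan is to deduce this from the machinery already in place, specifically Theorem \ref{thm:mixednormrecV2} specialized to the diagonal case $q_1 = p_1$, $q_2 = p_2$. In that regime the exponents $\beta_i$ and $r_i$ do not appear, and the two correction exponents in the theorem become $1/p_i - 1/q_i = 0$, so all the extra weight factors $|I_i|^{1/p_i - 1/q_i}$ collapse to $1$. Concretely, I would start by recalling that
\begin{equation*}
\|b\|_{\BMO_{\textup{rect}}(p_1', p_2')} = \sup_{R = I_1 \times I_2} \frac{\osc^{p_1', p_2'}(b, R)}{|I_1|^{1/p_1'}|I_2|^{1/p_2'}},
\end{equation*}
which is exactly the left-hand side appearing in Theorem \ref{thm:mixednormrecV2}.

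Next I would apply Theorem \ref{thm:mixednormrecV2} with $(q_1, q_2) = (p_1, p_2)$. Since both kernels $K_i$ are symmetrically non-degenerate (our standing assumption in this section), the theorem gives the clean conclusion
\begin{equation*}
\frac{\osc^{p_1', p_2'}(b, R)}{|I_1|^{1/p_1'}|I_2|^{1/p_2'}} \lesssim \wt{\Off}_{(p_1, p_2)}^{(p_1, p_2)} |I_1|^{0}|I_2|^{0} = \wt{\Off}_{(p_1, p_2)}^{(p_1, p_2)},
\end{equation*}
uniformly over all rectangles $R = I_1 \times I_2$. Taking the supremum over $R$ on the left then yields precisely $\|b\|_{\BMO_{\textup{rect}}(p_1', p_2')} \lesssim \wt{\Off}_{(p_1, p_2)}^{(p_1, p_2)}$, which is the claim.

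There is essentially no obstacle here: the proof is a direct specialization of an already-proved theorem, and the only thing to verify is the bookkeeping that the power weights $|I_i|^{1/p_i - 1/q_i}$ genuinely vanish when $q_i = p_i$, which is immediate. If one wanted to write it out at the level of Lemma \ref{lem:truncosc} directly (rather than quoting Theorem \ref{thm:mixednormrecV2}), one would fix a rectangle $R$, dualize $\osc^{p_1', p_2'}(b, R) \sim \osc^{p_1', p_2'}_N(b, R)$ via Lemma \ref{lem:oscDual} (letting $N \to \infty$ at the end), apply the symmetric domination \eqref{eq:symdom}, and bound each of the four pairings $|\langle [T_1, [b, T_2]] h_j, 1_{\wt R_j}\rangle|$ using the off-support constant $\wt{\Off}_{(p_1, p_2)}^{(p_1, p_2)}$ together with the normalizations $\|h_j\|_{L^{p_1'}_{x_1} L^{p_2'}_{x_2}(R_j)} \lesssim 1$; the required geometric conditions $\ell(J_i) = \ell(L_i)$ and $\dist(J_i, L_i) \sim \ell(J_i)$ hold because $R_j$ is built from $I_i$ and its reflection $\wt I_i$. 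Either route works, but quoting Theorem \ref{thm:mixednormrecV2} is the cleanest.
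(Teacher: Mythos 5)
Your proposal is correct and coincides with the paper's own proof, which likewise just unwinds the definition of $\|b\|_{\BMO_{\textup{rect}}(p_1',p_2')}$ as a supremum of normalized oscillations and invokes Theorem \ref{thm:mixednormrecV2} in the diagonal case $(q_1,q_2)=(p_1,p_2)$, where the power weights are trivial. The extra sketch at the level of Lemma \ref{lem:truncosc} is consistent with how Theorem \ref{thm:mixednormrecV2} is itself proved, but is not needed.
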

\begin{proof}
By definition and Theorem \ref{thm:mixednormrecV2} we have
$$
\|b\|_{\BMO_{\textup{rect}}(p_1', p_2')} = \sup_{R=I_1 \times I_2} \frac{\osc^{p_1', p_2'}(b, R)}{|I_1|^{1/{p_1'}} |I_2|^{1/{p_2'}}} \lesssim
\wt \Off_{(p_1, p_2)}^{(p_1, p_2)}.
$$
\end{proof}

\subsection{Conditions related to $\dot C^{0,\beta}(\dot L^r)$ and $\BMO(\dot L^r)$}
We need a new off-support constant to deal with the calculations arising from considerations involving the space $\dot L^r$. 
Define $B(x,y) = b(x_1, x_2)-b(x_1, y_2)-b(y_1,x_2)+b(y_1, y_2)$ and set
\begin{align*}
\Off_{(p_1, p_2), \Sigma}^{(q_1, q_2)} =  \sup \frac{\sum_{i=1}^N  
 \Big| \iint_{\R^{d}\times \R^d} B(x,y)
 K_1(x_1, y_1) K_2(x_2, y_2)f_{1,i}(y)f_{2,i}(x) \ud y \ud x \Big|}{
 \Big\| \sum_{i=1}^N  \|f_{1,i}\|_{L^{\infty}}1_{P_{1,i}} \|_{L^{p_1}_{x_1}L^{p_2}_{x_2}}
 \Big\| \sum_{i=1}^N  \|f_{2,i}\|_{L^{\infty}}1_{P_{2,i}} \|_{L^{q_1'}_{x_1}L^{q_2'}_{x_2}}
  },
\end{align*}
where the supremum is taken over rectangles $P_{1,i} = J_{1,i} \times J_{2,i}$ and $P_{2,i} = L_{1, i} \times L_{2,i}$ with
$$
\ell(J_{k,i}) = \ell(L_{k,i}) \qquad \textup{and} \qquad
\dist (J_{k,i}, L_{k,i})\sim \ell(J_{k,i})
$$
and over functions $f_{k,i} \in L^{\infty}(P_{k,i})$, $k=1,2$, $i=1,\ldots, N$.
Using that for linear operators $U$ we have the random sign trick
$$
\sum_{i=1}^N \langle Uf_i, g_i \rangle = \E \Big\langle U\Big( \sum_{i=1}^N \eps_i f_i \Big),  \sum_{j=1}^N \eps_j g_j \Big\rangle,
$$
we see that $\Off_{(p_1, p_2), \Sigma}^{(q_1, q_2)} \le \| [T_1, [b,T_2]]\|_{L^{p_1}_{x_1}L^{p_2}_{x_2} \to L^{q_1}_{x_1}L^{q_2}_{x_2}}$ whenever
the commutator exists as a bounded operator. Thus, this is a reasonable off-support constant.
\begin{lem}\label{lem1}
Suppose $p_2 > q_2$. For
all sparse collections $\mathscr S_2$ of cubes in $\R^{d_2}$, non-negative coefficients $\lambda_{S_2}$ satisfying $\sum_{S_2\in\mathscr S_2}\lambda_{S_2}^{r_2'}\abs{S_2}\leq 1$
and cubes $I_1 \subset \R^{d_1}$, we have
\begin{equation*}
  \sum_{S_2\in\mathscr S_2} \lambda_{S_2}\osc^{1,1}(b,I_1\times S_2)
  \lesssim \Off_{(p_1, p_2), \Sigma}^{(q_1, q_2)} \abs{I_1}^{1+(1/p_1-1/q_1)}.
\end{equation*}
\end{lem}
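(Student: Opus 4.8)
The idea is to feed the factorization of Lemma~\ref{lem:truncosc} into the summed off-support constant $\Off_{(p_1, p_2), \Sigma}^{(q_1, q_2)}$ after splitting the weights $\lambda_{S_2}$ in a way dictated by the identity $1/p_2+1/q_2'=1/r_2'$, and then to collapse the resulting mixed-norm quantities by sparseness. First one reduces to a \emph{finite} family $\mathscr S_2$, since the left-hand side is the supremum over finite subfamilies and every estimate below is uniform. For each $S_2\in\mathscr S_2$, apply Lemma~\ref{lem:truncosc} in its doubly-symmetric form \eqref{eq:symdom} (available since both $K_i$ are symmetrically non-degenerate) together with Remark~\ref{rem:osc11} to $R=I_1\times S_2$. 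This produces, for $j=0,1,2,3$, functions $h_j^{S_2}$ supported on rectangles $R_j^{S_2}=A_j^{(1)}\times B_{j,S_2}^{(2)}$ with $A_j^{(1)}\in\{I_1,\wt I_1\}$ \emph{independent of $S_2$}, $B_{j,S_2}^{(2)}\in\{S_2,\wt S_2\}$ and $\Norm{h_j^{S_2}}{L^\infty}\lesssim 1$, such that
\[
  \osc^{1,1}(b,I_1\times S_2)\lesssim\sum_{j=0}^3\Babs{\pair{[T_1,[b,T_2]]h_j^{S_2}}{1_{\wt R_j^{S_2}}}},
\]
where $\wt R_j^{S_2}$ is the reflection of $R_j^{S_2}$. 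Since $|A_j^{(1)}|=|\wt A_j^{(1)}|=|I_1|$ with $\dist(A_j^{(1)},\wt A_j^{(1)})\sim\ell(I_1)$, and likewise the $x_2$-sides of $R_j^{S_2}$ and $\wt R_j^{S_2}$ have measure $|S_2|$ and are $\sim\ell(S_2)$ apart, each pair $(R_j^{S_2},\wt R_j^{S_2})$ is admissible in the supremum defining $\Off_{(p_1, p_2), \Sigma}^{(q_1, q_2)}$; moreover the pairing above is, up to a sign, exactly the off-support integral of $B(x,y)K_1(x_1,y_1)K_2(x_2,y_2)h_j^{S_2}(y)1_{\wt R_j^{S_2}}(x)$, the supports being disjoint in both variables.

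Next, fix $j$ and write $\lambda_{S_2}=a_{S_2}c_{S_2}$ with $a_{S_2}:=\lambda_{S_2}^{r_2'/p_2}$ and $c_{S_2}:=\lambda_{S_2}^{r_2'/q_2'}$; this is a genuine factorization, since $1/p_2+1/q_2'=1-(1/q_2-1/p_2)=1/r_2'$. Testing $\Off_{(p_1, p_2), \Sigma}^{(q_1, q_2)}$ with the families $f_{1,S_2}:=a_{S_2}h_j^{S_2}$ and $f_{2,S_2}:=c_{S_2}1_{\wt R_j^{S_2}}$ (so $\Norm{f_{1,S_2}}{L^\infty}\lesssim a_{S_2}$ and $\Norm{f_{2,S_2}}{L^\infty}=c_{S_2}$), one gets
\[
  \sum_{S_2}\lambda_{S_2}\Babs{\pair{[T_1,[b,T_2]]h_j^{S_2}}{1_{\wt R_j^{S_2}}}}\lesssim\Off_{(p_1, p_2), \Sigma}^{(q_1, q_2)}\BNorm{\sum_{S_2}a_{S_2}1_{R_j^{S_2}}}{L^{p_1}_{x_1}L^{p_2}_{x_2}}\BNorm{\sum_{S_2}c_{S_2}1_{\wt R_j^{S_2}}}{L^{q_1'}_{x_1}L^{q_2'}_{x_2}}.
\]
Because $R_j^{S_2}$ has the fixed $x_1$-projection $A_j^{(1)}$ of measure $|I_1|$ (and $\wt R_j^{S_2}$ the fixed projection $\wt A_j^{(1)}$), the two mixed norms factor through $x_1$, leaving $|I_1|^{1/p_1}\Norm{\sum_{S_2}a_{S_2}1_{B_{j,S_2}^{(2)}}}{L^{p_2}}$ and $|I_1|^{1/q_1'}\Norm{\sum_{S_2}c_{S_2}1_{\wt B_{j,S_2}^{(2)}}}{L^{q_2'}}$, where each cube involved is $S_2$ or $\wt S_2$. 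Now $\wt S_2\subset CS_2$ for a fixed dilate, and for the sparse family $\mathscr S_2$ one has, by duality against $L^{p_2'}$ and the boundedness of the maximal operator, $\Norm{\sum_{S_2}a_{S_2}1_{CS_2}}{L^{p_2}}\lesssim(\sum_{S_2}a_{S_2}^{p_2}|S_2|)^{1/p_2}$ (and the same with $C=1$, and with $q_2'$ in place of $p_2$). Hence both $x_2$-norms are $\lesssim(\sum_{S_2}\lambda_{S_2}^{r_2'}|S_2|)^{1/p_2}\le 1$ and $(\sum_{S_2}\lambda_{S_2}^{r_2'}|S_2|)^{1/q_2'}\le 1$, respectively. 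Summing the four contributions of $j$ and using $1/p_1+1/q_1'=1+1/p_1-1/q_1$ yields the asserted bound.

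The main obstacle is recognizing the correct split of $\lambda_{S_2}$: the exponents $r_2'/p_2$ and $r_2'/q_2'$ are forced by the requirement that both resulting $\ell$-sums reduce to the \emph{single} available constraint $\sum_{S_2}\lambda_{S_2}^{r_2'}|S_2|\le1$, which is exactly what the relation $1/p_2+1/q_2'=1/r_2'$ (i.e.\ $1/r_2=1/q_2-1/p_2$ with $p_2>q_2$) permits; a scale-invariance check confirms that no power of $|I_1|$ or $|S_2|$ is lost. A secondary point is that the reflected cubes $\wt S_2$ do not themselves form a sparse family, but this is harmless since they lie inside bounded dilates of the $S_2$ and dilation does not spoil the $L^t$-estimate for sparse sums.
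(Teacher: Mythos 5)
Your proposal is correct and follows essentially the same route as the paper's proof: apply Lemma~\ref{lem:truncosc} to each $R=I_1\times S_2$, split $\lambda_{S_2}=\lambda_{S_2}^{r_2'/p_2}\lambda_{S_2}^{r_2'/q_2'}$ using the exponent identity $1/p_2+1/q_2'=1/r_2'$, feed the resulting families into $\Off_{(p_1,p_2),\Sigma}^{(q_1,q_2)}$, and then collapse the mixed norms via the tensor structure in $x_1$ and the sparse estimate \eqref{eq:basicSparse} in $x_2$. The only cosmetic difference is that the paper handles all four rectangles $(I_1\times S_2)_i$ uniformly by dominating each indicator by $1_{I_1^*\times S_2^*}$ with concentric dilates $I_1^*\supset I_1\cup\wt I_1$ and $S_2^*\supset S_2\cup\wt S_2$, whereas you factor the $x_1$-norm directly by noting the $x_1$-projection is a fixed cube for each $j$; both are valid and essentially identical.
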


\begin{proof}
Using Lemma \ref{lem:truncosc} we estimate
$$
   \sum_{S_2\in\mathscr S_2} \lambda_{S_2}\osc^{1,1}(b,I_1\times S_2) 
   \lesssim \sum_{i=0}^3 \sum_{S_2\in\mathscr S_2}\lambda_{S_2} |\pair{[T_1,[b,T_2]]h_{(I_1\times S_2)_i}}{1_{(I_1\times S_2)_i^{\sim}}}|
$$
for some $h_{(I_1\times S_2)_i} \in L^{\infty}((I_1\times S_2)_i)$ with $\|h_{(I_1\times S_2)_i}\|_{L^{\infty}} \lesssim 1$. 
Using the algebra
$$
\frac{r_2'}{p_2} + \frac{r_2'}{q_2'} = r_2'\Big( \frac{1}{p_2} + 1 - \frac{1}{q_2}\Big) 
= r_2'\Big(1 - \frac{1}{r_2}\Big) =1
$$
we write this in the form
$$
\sum_{i=0}^3 \sum_{S_2\in\mathscr S_2} |\pair{[T_1,[b,T_2]](\lambda_{S_2}^{r_2'/p_2}h_{(I_1\times S_2)_i})}{\lambda_{S_2}^{r_2'/q_2'}1_{(I_1\times S_2)_i^{\sim}}}|.
$$
This can directly be dominated with
$$
\Off_{(p_1, p_2), \Sigma}^{(q_1, q_2)} \sum_{i=0}^3\Big\| \sum_{S_2\in\mathscr S_2} \lambda_{S_2}^{r_2'/p_2}1_{(I_1\times S_2)_i} \Big\|_{L^{p_1}_{x_1}L^{p_2}_{x_2}}
\Big\| \sum_{S_2\in\mathscr S_2} \lambda_{S_2}^{r_2'/q_2'}1_{(I_1\times S_2)_i^{\sim}} \Big\|_{L^{q_1'}_{x_1}L^{q_2'}_{x_2}}.
$$

Here $1_{(I_1\times S_2)_i}\leq 1_{I_1^*\times S_2^*}$, where $I_1^*\supset I_1\cup\wt I_1$ and $S_2^*\supset S_2\cup\wt S_2$ are concentric dilations of $I_1$ and $S_2$ by a bounded factor. Since the collection $\{S_2^*:S_2\in\mathscr S_2\}$ is still sparse (with a different sparseness constant), it follows from \eqref{eq:basicSparse} 
and $\sum_{S_2\in\mathscr S_2}\lambda_{S_2}^{r_2'}\abs{S_2}\leq 1$ that
\begin{equation*}
\begin{split}
  &\Big\| \sum_{S_2\in\mathscr S_2} \lambda_{S_2}^{r_2'/p_2}1_{(I_1\times S_2)_i} \Big\|_{L^{p_1}_{x_1}L^{p_2}_{x_2}}
  \le \Big\|\sum_{S_2\in\mathscr S_2} \lambda_{S_2}^{r_2'/p_2}1_{I_1^* \times S_2^*} \Big\|_{L^{p_1}_{x_1}L^{p_2}_{x_2}} \\
  &=\Norm{1_{I_1^*}}{L^{p_1}}\BNorm{\sum_{S_2\in\mathscr S_2}\lambda_{S_2}^{r_2'/p_2}1_{S_2^*}}{L^{p_2}}
  \lesssim\abs{I_1}^{1/p_1}\Big(\sum_{S_2 \in\mathscr S_2}(\lambda_{S_2}^{r_2'/p_2})^{p_2}\abs{S_2}\Big)^{1/p_2} \le \abs{I_1}^{1/p_1}.
\end{split}
\end{equation*}
Similarly, using $1_{(I_1\times S_2)_i^{\sim}}\leq 1_{I^*_1\times S_2^*}$, we conclude that
\begin{equation*}
  \Big\| \sum_{S_2\in\mathscr S_2} \lambda_{S_2}^{r_2'/q_2'}1_{(I_1\times S_2)_i^{\sim}} \Big\|_{L^{q_1'}_{x_1}L^{q_2'}_{x_2}}
  \lesssim\abs{I_1}^{1/q_1'},
\end{equation*}
and this ends the proof.
\end{proof}

The symmetric version reads as follows.
\begin{lem}
Suppose $p_1 > q_1$. For
all sparse collections $\mathscr S_1$ of cubes in $\R^{d_1}$, non-negative coefficients $\lambda_{S_1}$ satisfying $\sum_{S_1\in\mathscr S_1}\lambda_{S_1}^{r_1'}\abs{S_1}\leq 1$
and cubes $I_2 \subset \R^{d_2}$, we have
\begin{equation*}
  \sum_{S_1\in\mathscr S_1} \lambda_{S_1}\osc^{1,1}(b,S_1\times I_2)
  \lesssim \Off_{(p_1, p_2), \Sigma}^{(q_1, q_2)} \abs{I_2}^{1+(1/p_2-1/q_2)}.
\end{equation*}
\end{lem}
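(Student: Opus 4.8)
The plan is to run the proof of Lemma \ref{lem1} with the roles of the two coordinates interchanged; the only structural point to check is that the off-support constant $\Off_{(p_1, p_2), \Sigma}^{(q_1, q_2)}$ is symmetric enough for this, which it is --- in its definition the rectangles $P_{1,i} = J_{1,i}\times J_{2,i}$ and $P_{2,i} = L_{1,i}\times L_{2,i}$ range freely in \emph{both} factors. So, first I would apply Lemma \ref{lem:truncosc} to each $\osc^{1,1}(b,S_1\times I_2)$ to obtain
$$
\sum_{S_1\in\mathscr S_1} \lambda_{S_1}\osc^{1,1}(b,S_1\times I_2)
\lesssim \sum_{i=0}^3 \sum_{S_1\in\mathscr S_1}\lambda_{S_1}\,\big|\pair{[T_1,[b,T_2]]h_{(S_1\times I_2)_i}}{1_{(S_1\times I_2)_i^{\sim}}}\big|
$$
for test functions $h_{(S_1\times I_2)_i}$ supported on the relevant reflected rectangles with $\|h_{(S_1\times I_2)_i}\|_{L^\infty}\lesssim 1$.

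Next I would use the algebra identity, now attached to the first coordinate,
$$
\frac{r_1'}{p_1} + \frac{r_1'}{q_1'} = r_1'\Big(\frac{1}{p_1} + 1 - \frac{1}{q_1}\Big) = r_1'\Big(1 - \frac{1}{r_1}\Big) = 1,
$$
writing $\lambda_{S_1} = \lambda_{S_1}^{r_1'/p_1}\cdot\lambda_{S_1}^{r_1'/q_1'}$, distributing the two factors onto the input and the output of each pairing, and applying the random-sign trick from the definition of $\Off_{(p_1,p_2),\Sigma}^{(q_1,q_2)}$. This dominates the four sums by
$$
\Off_{(p_1, p_2), \Sigma}^{(q_1, q_2)} \sum_{i=0}^3 \BNorm{\sum_{S_1\in\mathscr S_1} \lambda_{S_1}^{r_1'/p_1}1_{(S_1\times I_2)_i}}{L^{p_1}_{x_1}L^{p_2}_{x_2}}
\BNorm{\sum_{S_1\in\mathscr S_1} \lambda_{S_1}^{r_1'/q_1'}1_{(S_1\times I_2)_i^{\sim}}}{L^{q_1'}_{x_1}L^{q_2'}_{x_2}}.
$$

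Finally I would estimate the two mixed-norm factors. Bounding $1_{(S_1\times I_2)_i}\le 1_{S_1^*\times I_2^*}$ and $1_{(S_1\times I_2)_i^{\sim}}\le 1_{S_1^*\times I_2^*}$ by concentric dilations by a bounded factor, with $S_1^*\supset S_1\cup\wt S_1$ and $I_2^*\supset I_2\cup\wt I_2$, I would exploit the product structure $1_{S_1^*\times I_2^*}(x_1,x_2)=1_{S_1^*}(x_1)1_{I_2^*}(x_2)$ and the fact that $I_2$ does not depend on $S_1$ to factor the mixed norm:
$$
\BNorm{\sum_{S_1\in\mathscr S_1} \lambda_{S_1}^{r_1'/p_1}1_{S_1^*\times I_2^*}}{L^{p_1}_{x_1}L^{p_2}_{x_2}}
= \Norm{1_{I_2^*}}{L^{p_2}}\,\BNorm{\sum_{S_1\in\mathscr S_1}\lambda_{S_1}^{r_1'/p_1}1_{S_1^*}}{L^{p_1}}.
$$
Since $\{S_1^* : S_1\in\mathscr S_1\}$ is still sparse (with a worse constant), \eqref{eq:basicSparse} together with $\sum_{S_1\in\mathscr S_1}\lambda_{S_1}^{r_1'}|S_1|\le 1$ bounds the second factor by a constant, and $\Norm{1_{I_2^*}}{L^{p_2}}\lesssim|I_2|^{1/p_2}$; the identical computation gives $|I_2|^{1/q_2'}$ for the dual factor. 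Multiplying, $|I_2|^{1/p_2}\cdot|I_2|^{1/q_2'}=|I_2|^{1+1/p_2-1/q_2}$, which is exactly the claimed power. I do not expect a genuine obstacle, since the argument is the literal mirror image of Lemma \ref{lem1}; the one point worth stating is that the mixed-norm factoring above must be done \emph{before} the sparse estimate, which is legitimate precisely because the summation runs over cubes $S_1$ in the $x_1$-variable while the $x_2$-rectangle $I_2$ is fixed throughout.
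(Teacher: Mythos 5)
Your argument is correct and is exactly the coordinate-swapped version of the paper's proof of Lemma \ref{lem1}; the paper itself states this lemma without proof, simply as ``the symmetric version,'' and you have correctly verified the only point that needs checking, namely that the mixed norm of the tensor product $\big(\sum_{S_1}\lambda_{S_1}^{r_1'/p_1}1_{S_1^*}\big)\otimes 1_{I_2^*}$ factors in either order of the variables. (One minor remark: the random-sign trick is not needed here --- the definition of $\Off_{(p_1,p_2),\Sigma}^{(q_1,q_2)}$ already bounds the sum of pairings directly; the random signs only enter when comparing $\Off_\Sigma$ to the operator norm of the commutator.)
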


Relating the oscillatory constants to function spaces we can say the following.
\begin{prop}\label{prop:CLrBMOLr}
If $p_1 < q_1$ and $p_2 > q_2$ then
$$
\|b\|_{\dot C^{0,\beta_1}_{x_1}(\dot L^{r_2}_{x_2})} \lesssim \Off_{(p_1, p_2), \Sigma}^{(q_1, q_2)}
$$
and if $p_1 = q_1$ and $p_2 > q_2$ then
\begin{align*}
\sup\Big\{ \frac{1}{\abs{I_1}} \sum_{S_2 \in\mathscr S_2}\lambda_{S_2} \osc^{1,1}(b, I_1 \times S_2) \colon 
     I_1 \subset\R^{d_1},\mathscr S_2 \text{ sparse},&\sum_{S_2 \in\mathscr S_2 }\abs{S_2}\lambda_{S_2}^{r_2'}\leq 1\Big\} \\
     &\lesssim \Off_{(p_1, p_2), \Sigma}^{(p_1, q_2)},
\end{align*}
where the supremum on the left hand side is also dominated by $\|b\|_{\BMO_{x_1}(\dot L^{r_2}_{x_2})}$. The symmetric statements
hold if $p_1 > q_1$ and $p_2 < q_2$ or $p_1 > q_1$ and $p_2=q_2$.
\end{prop}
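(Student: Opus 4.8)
The plan is to obtain this proposition purely by assembling previously established pieces: the oscillatory characterization of $\dot C^{0,\alpha}_{x_1}(\dot L^r_{x_2})$ in Proposition \ref{prop:CLr}, the off-support bound for sparse sums of oscillations in Lemma \ref{lem1} (and its symmetric twin stated just after it), and the $\BMO_{x_1}(\dot L^r_{x_2})$ domination in Lemma \ref{lem:BMOLr}. No new mechanism is needed; the only thing to monitor is that the Lebesgue-measure powers coming out of Lemma \ref{lem1} match the normalizing factors in Proposition \ref{prop:CLr}.

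For the case $p_1 < q_1$ and $p_2 > q_2$, I would first apply Proposition \ref{prop:CLr} with $\alpha = \beta_1$ and $r = r_2$, which writes $\Norm{b}{\dot C^{0,\beta_1}_{x_1}(\dot L^{r_2}_{x_2})}$ as the supremum, over cubes $I_1 \subset \R^{d_1}$ and sparse families $\mathscr S_2$ with $\sum_{S_2 \in \mathscr S_2}\abs{S_2}\lambda_{S_2}^{r_2'}\le 1$, of the quantity
$$
\frac{1}{\abs{I_1}\ell(I_1)^{\beta_1}}\sum_{S_2 \in \mathscr S_2}\lambda_{S_2}\,\osc^{1,1}(b, I_1 \times S_2).
$$
Then Lemma \ref{lem1} bounds the inner sum by $\Off_{(p_1,p_2),\Sigma}^{(q_1,q_2)}\abs{I_1}^{1+(1/p_1-1/q_1)}$. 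Since $\beta_1 = d_1(1/p_1-1/q_1)$, we have $\ell(I_1)^{\beta_1} = \abs{I_1}^{1/p_1-1/q_1}$, so the normalizing prefactor cancels exactly against $\abs{I_1}^{1+(1/p_1-1/q_1)}/\abs{I_1}$, leaving the bound $\Off_{(p_1,p_2),\Sigma}^{(q_1,q_2)}$ uniformly in $I_1$ and $\mathscr S_2$. Taking the supremum proves the first inequality.

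For the case $p_1 = q_1$ and $p_2 > q_2$, I would apply Lemma \ref{lem1} directly with $q_1 = p_1$: for a fixed cube $I_1$ and admissible sparse data it gives $\sum_{S_2\in\mathscr S_2}\lambda_{S_2}\,\osc^{1,1}(b,I_1\times S_2)\lesssim \Off_{(p_1,p_2),\Sigma}^{(p_1,q_2)}\abs{I_1}$, and dividing by $\abs{I_1}$ and taking the supremum over $I_1$ and $\mathscr S_2$ yields the displayed estimate. The accompanying claim that this same supremum is dominated by $\Norm{b}{\BMO_{x_1}(\dot L^{r_2}_{x_2})}$ is exactly Lemma \ref{lem:BMOLr}, so nothing more is required. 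Finally, the symmetric statements for $p_1 > q_1$ with $p_2 < q_2$ or $p_2 = q_2$ follow by running the same argument with the roles of the two coordinates interchanged, using the symmetric version of Lemma \ref{lem1} (the unlabeled lemma stated immediately after it), Proposition \ref{prop:CLr} with $x_1$ and $x_2$ swapped, and the coordinate-swapped analogue of Lemma \ref{lem:BMOLr}.

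Honestly, there is no substantive obstacle at this stage: the proposition repackages facts already proved, and the only subtlety is the elementary identity $\ell(I_1)^{\beta_1} = \abs{I_1}^{1/p_1-1/q_1}$ that makes the scaling factors telescope. The real work sits upstream, in Lemma \ref{lem1} (which rests on the weak factorization of Lemma \ref{lem:IteratedFactorization} and the reduction in Lemma \ref{lem:truncosc}) and in the oscillatory characterization Proposition \ref{prop:CLr}.
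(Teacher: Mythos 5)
Your proposal is correct and matches the paper's own (one-line) proof, which simply cites Lemma \ref{lem1}, its symmetric twin, Proposition \ref{prop:CLr}, and Lemma \ref{lem:BMOLr}. You have in fact done more than the paper by explicitly checking the scaling identity $\ell(I_1)^{\beta_1}=\abs{I_1}^{1/p_1-1/q_1}$ that makes the exponents cancel.
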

\begin{proof}
Follows by combining the above two lemmas with Proposition \ref{prop:CLr} and Lemma \ref{lem:BMOLr}.
\end{proof}

\subsection{A condition related to $\dot L^{r_1}(\dot L^{r_2})$}

\begin{prop}\label{prop:dotdot}
If $p_1>q_1$ and $p_2>q_2$, then
for all sparse collections $\mathscr S_i$ of cubes in $\R^{d_i}$ and non-negative coefficients $\lambda_{i, S_i}$ satisfying $\sum_{S_i\in\mathscr S_i}\lambda_{i, S_i}^{r_i'}|S_i| \le 1$
we have
\begin{equation*}
  \sum_{S_1 \in\mathscr S_1}\sum_{S_2 \in\mathscr S_2} \lambda_{1,S_1}\lambda_{2,S_2}\osc^{1,1}(b,S_1\times S_2)
  \lesssim \Off_{(p_1, p_2), \Sigma}^{(q_1,q_2)}.
\end{equation*}
\end{prop}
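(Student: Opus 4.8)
The plan is to run exactly the argument of Lemma \ref{lem1}, but now handling the $\mathscr S_1$-sum \emph{and} the $\mathscr S_2$-sum by sparseness at the same time; this works because all the indicator functions that appear are tensor products, so the double sum decouples.

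First I would apply the symmetric form \eqref{eq:symdom} of Lemma \ref{lem:truncosc} (both $K_i$ are symmetrically non-degenerate): for each rectangle $R = S_1 \times S_2$ there are functions $h_{R,0},\dots,h_{R,3}$, supported respectively on the four rectangles $R_i \in \{S_1 \times S_2,\ \wt S_1 \times S_2,\ S_1 \times \wt S_2,\ \wt S_1 \times \wt S_2\}$, with $\|h_{R,i}\|_{L^\infty} \lesssim 1$ and $\osc^{1,1}(b,R) \lesssim \sum_{i=0}^{3} |\pair{[T_1,[b,T_2]]h_{R,i}}{1_{\wt R_i}}|$, where $\wt R_i$ denotes $R_i$ with both of its defining cubes reflected. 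Each such pair $(R_i, \wt R_i)$ is a pair of products of cubes with matching sidelengths and separations $\sim$ sidelength in each coordinate, hence admissible in the definition of $\Off_{(p_1,p_2),\Sigma}^{(q_1,q_2)}$.

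Next, multiply by $\lambda_{1,S_1}\lambda_{2,S_2}$ and sum over $S_1 \in \mathscr S_1$, $S_2 \in \mathscr S_2$ (we may assume the collections finite). Using, as in Lemma \ref{lem1}, the algebra
\[
\frac{r_i'}{p_i} + \frac{r_i'}{q_i'} = r_i'\Big(\frac{1}{p_i} + 1 - \frac{1}{q_i}\Big) = r_i'\Big(1 - \frac{1}{r_i}\Big) = 1, \qquad i = 1,2,
\]
I would split $\lambda_{1,S_1}\lambda_{2,S_2} = \big(\lambda_{1,S_1}^{r_1'/p_1}\lambda_{2,S_2}^{r_2'/p_2}\big)\big(\lambda_{1,S_1}^{r_1'/q_1'}\lambda_{2,S_2}^{r_2'/q_2'}\big)$, put the first factor on $h_{R,i}$ and the second on $1_{\wt R_i}$, then re-index the pairs $(S_1,S_2)$ as the summation index in the definition of $\Off_{(p_1,p_2),\Sigma}^{(q_1,q_2)}$ (for each fixed $i \in \{0,1,2,3\}$). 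This bounds the whole double sum by
\[
\Off_{(p_1,p_2),\Sigma}^{(q_1,q_2)} \sum_{i=0}^{3} \Big\|\sum_{S_1,S_2} \lambda_{1,S_1}^{r_1'/p_1}\lambda_{2,S_2}^{r_2'/p_2} 1_{R_i}\Big\|_{L^{p_1}_{x_1}L^{p_2}_{x_2}}\Big\|\sum_{S_1,S_2} \lambda_{1,S_1}^{r_1'/q_1'}\lambda_{2,S_2}^{r_2'/q_2'} 1_{\wt R_i}\Big\|_{L^{q_1'}_{x_1}L^{q_2'}_{x_2}}.
\]

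Finally, for each fixed $i$ the rectangle $R_i$ is a product $A_1(S_1) \times A_2(S_2)$ with $A_k(S_k) \in \{S_k, \wt S_k\}$, so $1_{R_i} = 1_{A_1(S_1)} \otimes 1_{A_2(S_2)}$ and the double sum factors as a tensor product of two one-parameter sums; hence each mixed norm factors as a product of two one-parameter norms of the form $\big\|\sum_{S_k} \lambda_{k,S_k}^{r_k'/p_k} 1_{A_k(S_k)}\big\|_{L^{p_k}(\R^{d_k})}$ (and likewise with $q_k'$). Since the $\wt S_k$ are concentric bounded dilations of $S_k$, the collection $\{A_k(S_k)\}$ is still sparse (with a worse constant), so \eqref{eq:basicSparse} applied with $r = p_k$ gives $\big\|\sum_{S_k} \lambda_{k,S_k}^{r_k'/p_k} 1_{A_k(S_k)}\big\|_{L^{p_k}}^{p_k} \lesssim \sum_{S_k} \lambda_{k,S_k}^{r_k'}|S_k| \le 1$, and the same with $p_k$ replaced by $q_k'$ controls the other factor. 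Multiplying these bounds, each of the four terms is $\lesssim \Off_{(p_1,p_2),\Sigma}^{(q_1,q_2)}$, giving the claim. The only genuinely delicate point is the bookkeeping that decouples the double sum into independent one-parameter pieces; once the tensor structure of the reflected rectangles $R_i, \wt R_i$ is recorded, the estimate collapses to the one-parameter sparse inequality already used in Proposition \ref{prop:LrOsc} and Lemma \ref{lem1}.
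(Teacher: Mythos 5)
Your proposal is correct and follows essentially the same route as the paper: apply Lemma \ref{lem:truncosc} to each rectangle $S_1\times S_2$, split $\lambda_{1,S_1}\lambda_{2,S_2}$ via the exponent identity $r_i'/p_i+r_i'/q_i'=1$, invoke the definition of $\Off_{(p_1,p_2),\Sigma}^{(q_1,q_2)}$, and exploit the tensor structure to reduce each mixed norm to a product of one-parameter sparse bounds. One small phrasing slip: $\wt S_k$ is not a concentric dilation of $S_k$ (it is a disjoint reflected cube); the correct fix, as in the paper, is to dominate $1_{A_k(S_k)}\le 1_{S_k^*}$ where $S_k^*\supset S_k\cup\wt S_k$ \emph{is} a concentric bounded dilation, so that $\{S_k^*\}$ remains sparse and \eqref{eq:basicSparse} applies.
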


\begin{proof}
Using Lemma \ref{lem:truncosc} we estimate
\begin{equation*}
\begin{split}
   \sum_{S_1\in\mathscr S_1}&\sum_{S_2 \in\mathscr S_2} \lambda_{1,S_1}\lambda_{2,S_2}\osc^{1,1}(b,S_1\times S_2) \\
   &\lesssim \sum_{i=0}^3 \sum_{S_1\in\mathscr S_1}\sum_{S_2\in\mathscr S_2} \lambda_{1,S_1}\lambda_{2,S_2} \abs{ \pair{[T_1,[b,T_2]]h_{(S_1\times S_2)_i}}{1_{(S_1\times S_2)_i^{\sim}}}  } \\
   &= \sum_{i=0}^3 \sum_{S_1\in\mathscr S_1}\sum_{S_2\in\mathscr S_2} \abs{ \pair{[T_1,[b,T_2]](\lambda_{1,S_1}^{r_1'/p_1}\lambda_{2,S_2}^{r_2'/p_2}h_{(S_1\times S_2)_i})}{\lambda_{1,S_1}^{r_1'/q_1'}\lambda_{2,S_2}^{r_2'/q_2'}1_{(S_1\times S_2)_i^{\sim}}} } \\
      &\leq \sum_{i=0}^3 \Off_{(p_1, p_2), \Sigma}^{(q_1,q_2)} 
   \BNorm{\sum_{\substack{ S_1\in\mathscr S_1\\ S_2\in\mathscr S_2}}\lambda_{1,S_1}^{r_1'/p_1}\lambda_{2,S_2}^{r_2'/p_2} 1_{(S_1\times S_2)_i}}{L^{p_1}_{x_1} L^{p_2}_{x_2}} \times \\
     &\hspace{5cm} \BNorm{ \sum_{\substack{ S_1 \in\mathscr S_1 \\ S_2 \in\mathscr S_2}} \lambda_{1,S_1}^{r_1'/q_1'}\lambda_{2,S_2}^{r_2'/q_2'}1_{(S_1\times S_2)_i^{\sim}}}{L^{q_1'}_{x_1}L^{q_2'}_{x_2}}.
\end{split}
\end{equation*}
Using the notation of the proof of Lemma \ref{lem1} we have
\begin{equation*}
\begin{split}
  \BNorm{\sum_{\substack{ S_1\in\mathscr S_1\\ S_2\in\mathscr S_2}}&\lambda_{1,S_1}^{r_1'/p_1}
  \lambda_{2,S_2}^{r_2'/p_2} 1_{(S_1\times S_2)_i}}{L^{p_1}_{x_1} L^{p_2}_{x_2}} \\
  &\leq \BNorm{\sum_{\substack{ S_1\in\mathscr S_1\\ S_2\in\mathscr S_2}}\lambda_{1,S_1}^{r_1'/p_1}
  \lambda_{2,S_2}^{r_2'/p_2} 1_{S_1^* \times S_2^*}}{L^{p_1}_{x_1} L^{p_2}_{x_2}} \\
  &=\BNorm{\sum_{S_1\in\mathscr S_1}\lambda_{1,S_1}^{r_1'/p_1} 1_{S_1^*}}{L^{p_1}_{x_1}} \BNorm{\sum_{S_2 \in\mathscr S_2}\lambda_{2,S_2}^{r_2'/p_2} 1_{S_2^*}}{L^{p_2}_{x_2}},
\end{split}
\end{equation*}
where for both $i=1,2$ we have by the sparseness of the family $\{S_i^*: S_i \in\mathscr S_i\}$ that
\begin{equation*}
  \BNorm{\sum_{S_i\in\mathscr S_i}\lambda_{i,S_i}^{r_i'/p_i} 1_{S_i^*}}{L^{p_i}_{x_i}}
  \lesssim\Big(\sum_{S_i\in\mathscr S_i}(\lambda_{i,S_i}^{r_i'/p_i})^{p_i}\abs{S_i}\Big)^{1/p_i}\leq 1,
\end{equation*}
and the the estimate for the $L^{q_1'}_{x_1}L^{q_2'}_{x_2}$ norm is completely analogous.
\end{proof}
Of course, we would like to be able to bound $\|b\|_{\dot L^{r_1}_{x_1} \dot L^{r_2}_{x_2}}$ above, but we only know the direction \eqref{eq:Lrr}.

\begin{rem}
For $p_1=p_2=:p$ and $q_1=q_2=:q$, almost the same argument gives the following slightly stronger conclusion: Suppose that $[T_1,[b,T_2]]: L^p \to L^q$, where $p>q$ (this can be replaced with the off-support constants as usual).
Then for $1/r=1/q-1/p$,
 any sparse collections $\mathscr R$ of {\em rectangles} $R=I_1\times I_2$ in $\R^{d_1}\times\R^{d_2}$ and coefficients $\sum_{R\in\mathscr R}\lambda_{R}^{r'}\abs{R}\leq 1$, we have
\begin{equation*}
  \sum_{R\in\mathscr R} \lambda_{R}\osc^{1,1}(b,R)
  \lesssim \Norm{[T_1,[b,T_2]]}{L^{p}\to L^{q}}.
\end{equation*}
The difference is that the coefficients are not of the product form, while above 
we had $\mathscr R=\mathscr S_1\times\mathscr S_2=\{S_1\times S_2:S_1\in\mathscr S_1,S_2\in\mathscr S_2\}$.
The point of the generalisation is that we can still split the coefficients $\lambda_R=\lambda_R^{r'/p}\lambda_R^{r'/q'}$ and estimate
\begin{equation*}
  \BNorm{\sum_{R\in\mathscr R}\lambda_R^{r'/p}1_{R^*}}{L^p}
  \lesssim\Big(\sum_{R\in\mathscr R}(\lambda_R^{r'/p})^p\abs{R}\Big)^{1/p}\lesssim 1;
\end{equation*}
in the case of mixed norms, it seems unclear how to perform this splitting of the coefficients. But even with the stronger condition, we do not know how to relate it to $b\in\dot L^r(\dot L^r)$. This problem is a variant of the question of bi-parameter sparse domination that has attracted some attention. Recently, \cite{BCOR} showed the {\em failure} of the natural sparse form bound for the strong maximal operator. However, our problem is more like finding a sparse bound for the {\em identity operator}!
\end{rem}

\section{Sufficient conditions for commutator boundedness}\label{sec:suf}
In this section we are given Calder\'on--Zygmund operators $T_i$ on $\R^{d_i}$ with standard kernels $K_i$, $i = 1,2$. We also always assume that $b\in L^s_{\loc}(\R^d)$ for some $s > 1$. 
Moreover, we let $p_1, p_2, q_1, q_2 \in (1,\infty)$ and use the familiar notation
\begin{equation*}
\begin{split}
  \beta_i :=d_i\Big(\frac{1}{p_i}-\frac{1}{q_i}\Big),\quad \text{if}\quad p_i<q_i;\qquad
  \frac{1}{r_i} :=\frac{1}{q_i}-\frac{1}{p_i},\quad \text{if}\quad p_i>q_i.
\end{split}
\end{equation*}
This fixed data is not repeated in the statements of the results. 

We aim to prove
$$
\| [T_1, [b, T_2]] \|_{L^{p_1}_{x_1}L^{p_2}_{x_2} \to L^{q_1}_{x_1}L^{q_2}_{x_2} } \lesssim \|b\|_{Y_{p_i, q_i}}
$$
preferably with a function space $Y_{p_i, q_i}$ that matches the corresponding necessary condition obtained in Section \ref{sec:nes}.

Along the way we will need several kernel representations for $\langle [T_1, [b, T_2]]f, g \rangle$ even if $f$ and $g$ do not have disjoint supports.
To make these rigorous we recall the notion of maximal truncations of CZOs.
For $\epsilon > 0$ we define for all $x_i \in \R^{d_i}$ and $f \in L^p(\R^{d_i})$, $1 \le p < \infty$, that
$$
T_{i, \epsilon} f(x_i) = \int_{|x_i-y_i| > \epsilon} K_i(x_i,y_i) f(y_i) \ud y_i.
$$
We define the maximal truncation of $T_i$ via the formula
$$
T_{i,*} f(x_i) = \sup_{\epsilon > 0} |T_{i, \epsilon} f(x_i)|.
$$
For the CZO $T_i$ the following Cotlar's inequality is true: for $0 < r < 1$ we have for all $x_i \in \R^{d_i}$ that
$$
T_{i,*} f(x_i) \lesssim_r M_r T_if(x_i) +  Mf(x_i).
$$
Here $M$ is the Hardy--Littlewood maximal function on $\R^{d_i}$ and $M_r g = (M |g|^r)^{1/s}$. The implicit
constant in Cotlar's inequality depends on $\|T_i\|_{L^2 \to L^2}$, but this is the type of CZO data that we will not track.
The consequence of Cotlar's inequality is that also $T_*$ maps $L^p(\R^{d_i})$ to $L^p(\R^{d_i})$ for all $p \in (1,\infty)$.
A standard fact also is that
\begin{equation}\label{eq:Tred}
T_i f = T_{i,0}f + a_i f,
\end{equation}
where $a_i$ is a bounded function on $\R^{d_i}$, $T_{i,0}$ is bounded on $L^p(\R^{d_i})$ for all $p \in (1,\infty)$, and for some sequence $\epsilon_j \to 0$ we have
$$
\int T_{i,0}f \cdot g   = \lim_{j \to \infty} \int T_{i, \epsilon_j} f \cdot g
$$
whenever $f \in L^p(\R^{d_i})$ and $g \in L^{p'}(\R^{d_i})$ for some $p \in (1,\infty)$.

For the kernel representations it is convenient to again denote
$$
B(x,y) = b(x_1, x_2)-b(x_1, y_2)-b(y_1,x_2)+b(y_1, y_2).
$$

\subsection{$\dot C^{0, \alpha}(X)$}
We begin with our first kernel representation.
\begin{prop}\label{prop:pointwise}
If
$$
(x, y) \mapsto B(x,y)
 K_1(x_1, y_1) K_2(x_2, y_2) \in L^{1}_{\loc}(\R^{2d}),
$$
then for all $f,g\in L_c^\infty(\R^d)$ we have
\begin{align*}
\bla [T_1, [b, T_2]]f, g\bra= - \iint_{\R^{d}\times \R^d} B(x,y)
 K_1(x_1, y_1) K_2(x_2, y_2)f(y)g(x) \ud y \ud x.
\end{align*}
\end{prop}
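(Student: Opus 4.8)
\emph{Proof plan.} The plan is to reduce everything to the formal kernel computation and then justify the passage to the limit by a standard truncation. Formally, writing $[b,T_2]f$ in kernel form (with $T_2$ acting in the variable $x_2$), $[b,T_2]f(x)=\int K_2(x_2,y_2)[b(x_1,x_2)-b(x_1,y_2)]f(x_1,y_2)\ud y_2$, then applying $T_1$ in the variable $x_1$ and subtracting $[b,T_2](T_1f)$ --- using $T_1T_2=T_2T_1$ --- the four ``boundary'' values of $b$ combine into exactly $-B(x,y)$, so that
\begin{equation*}
[T_1,[b,T_2]]f(x) = -\iint_{\R^{d_1}\times\R^{d_2}} K_1(x_1,y_1)K_2(x_2,y_2)\,B(x,(y_1,y_2))\,f(y_1,y_2)\ud y_1\ud y_2.
\end{equation*}
The conceptual point making this legitimate is that $B(x,y)=0$ whenever $x_1=y_1$ or $x_2=y_2$, so $B$ absorbs the diagonal singularities of $K_1$ and $K_2$; this is precisely why the hypothesis $B\,K_1\,K_2\in L^1_{\loc}(\R^{2d})$ is the natural assumption under which the right-hand side of the claim is an absolutely convergent integral, its integrand being dominated by $\|f\|_\infty\|g\|_\infty\,|B\,K_1\,K_2|\,1_{\supp g\times\supp f}$.

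To make the computation rigorous I would work with the truncated operators $T_{i,\eps_i}$ and the truncated bi-commutator
\begin{equation*}
C_{\eps_1,\eps_2}f(x):=-\iint_{|x_1-y_1|>\eps_1,\, |x_2-y_2|>\eps_2} K_1(x_1,y_1)K_2(x_2,y_2)\,B(x,(y_1,y_2))\,f(y_1,y_2)\ud y_1\ud y_2.
\end{equation*}
On the truncated product region the kernels are bounded, $b\in L^s_{\loc}$ and $f\in L^\infty_c$, so Fubini applies throughout, and the same algebra as above, now with every integral absolutely convergent, yields the operator identity
\begin{equation*}
C_{\eps_1,\eps_2}f = T_{1,\eps_1}\big([b,T_{2,\eps_2}]f\big) - [b,T_{2,\eps_2}]\big(T_{1,\eps_1}f\big).
\end{equation*}
Pairing with $g$ and letting $\eps_1,\eps_2\to 0$, dominated convergence with the integrable majorant above shows that $\langle C_{\eps_1,\eps_2}f,g\rangle$ converges to the right-hand side of the claim.

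It then remains to show that $\langle C_{\eps_1,\eps_2}f,g\rangle\to\langle[T_1,[b,T_2]]f,g\rangle$ via the operator identity. Here I would invoke \eqref{eq:Tred}: write $T_i=T_{i,0}+a_i$ with $a_i$ a bounded multiplier in the variable $x_i$. Since bounded multipliers commute with one another and a multiplier in the variable $x_1$ commutes with any operator acting only in $x_2$ (and conversely), one checks $[b,T_2]=[b,T_{2,0}]$ and $[T_1,[b,T_{2,0}]]=[T_{1,0},[b,T_{2,0}]]$ as operators on $L^\infty_c(\R^d)$; thus $\langle[T_1,[b,T_2]]f,g\rangle=\langle[T_{1,0},[b,T_{2,0}]]f,g\rangle$. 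Finally, taking the limits iteratively --- first $\eps_1\to0$ along the null sequence furnished by \eqref{eq:Tred} for $K_1$, then $\eps_2\to0$ along the one for $K_2$ --- the weak-convergence property $\int T_{i,\eps_j}\phi\cdot\psi\to\int T_{i,0}\phi\cdot\psi$ lets us replace each $T_{i,\eps_i}$ in the operator identity by $T_{i,0}$, so that $\langle C_{\eps_1,\eps_2}f,g\rangle\to\langle[T_{1,0},[b,T_{2,0}]]f,g\rangle$. Comparing the two limits proves the identity.

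The step I expect to be the main obstacle is this last one: to apply the weak convergence of \eqref{eq:Tred} (stated in the one-parameter setting) one must check that the intermediate functions arising in the operator identity --- such as $[b,T_{2,\eps_2}]f$ and $T_{2,\eps_2}(b\,T_{1,\eps_1}f)$ --- lie, after the pairing with $g$, in suitable $L^p$ spaces together with a dual partner in $L^{p'}$, which is slightly delicate since $b$ is only locally integrable. This is handled by exploiting the compact supports of $f$ and $g$ (which force the relevant functions to vanish when $x_1$ or $x_2$ leaves a fixed compact set, and to decay otherwise) together with a harmless cutoff, and by performing the $\eps_1$ and $\eps_2$ limits in succession rather than jointly.
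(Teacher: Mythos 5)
Your proposal follows essentially the same approach as the paper's proof: reduce to $[T_{1,0},[b,T_{2,0}]]$ via \eqref{eq:Tred}, expand the four-term commutator, pass to truncated kernels and take iterated limits, with the right-hand side recovered by dominated convergence using the hypothesis $B\,K_1\,K_2\in L^1_{\loc}$. The "main obstacle" you flag is handled in the paper by Cotlar's inequality: the maximal truncations $T_{i,*}$ are $L^p$-bounded, which together with $b\in L^s_{\loc}$ and the compact supports of $f,g$ yields integrable majorants such as $T_{1,*}(bT_{2,0}f)\cdot g\in L^1(\R^d)$, justifying the interchange of limits and integration that your "harmless cutoff" only gestures at.
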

\begin{proof}
We write $T_i h = T_{i,0}h + a_i h$ as in \eqref{eq:Tred}.
Notice that $[T_1, [b, T_2]] = [T_{1,0}, [b, T_{2,0}]]$ and that we have by definition
\[
[T_{1,0}, [b, T_{2,0}]]f=T_{1,0} (bT_{2,0}f)+T_{2,0} (bT_{1,0}f)- bT_{1,0}T_{2,0}f-T_{1,0}T_{2,0} (bf).
\]
We consider these terms separately. For all $t \in (1,s)$ we have
for almost every $x_2$ that $x_1 \mapsto b(x_1, x_2)T_{2,0}(f(x_1, \cdot))(x_2) \in L^{t}(\R^{d_1})$. Therefore, we have
\[
\langle T_{1,0} (bT_{2,0}f), g\rangle_{\R^{d_1}} = \lim_{j\to \infty}\langle T_{1, \varepsilon_j} (bT_{2,0}f), g\rangle_{\R^{d_1}}.
\]
Since $T_{1,*}(bT_{2,0}f)g\in L^1(\R^d)$
we have by dominated convergence theorem and Fubini's theorem that
\begin{align*}
\langle T_{1,0} (bT_{2,0}f), g\rangle&= \lim_{j\to \infty}\langle T_{1, \varepsilon_j} (bT_{2,0}f), g\rangle = \lim_{j\to \infty}\int_{\R^{d_1}}\langle T_{2,0}f, bT_{1,\varepsilon_j}^* g\rangle_{\R^{d_2}}.
\end{align*}
Similarly as above, we further have
$$
\langle T_{1,0} (bT_{2,0}f), g\rangle= \lim_{j\to \infty} \int_{\R^{d_1}}\lim_{k \to \infty}\langle T_{2, \rho_k}f, bT_{1,\varepsilon_j}^* g\rangle_{\R^{d_2}}
=  \lim_{j\to \infty} \lim_{k \to \infty} \langle T_{1,\varepsilon_j} (bT_{2,\rho_k}f), g\rangle.
$$

We can similarly write out all the other terms of the commutator $\langle [T_{1,0}, [b, T_{2,0}]]f, g \rangle$.
Thus, we have
\begin{align*}
\langle [T_{1,0}, [b, &T_{2,0}]]f, g \rangle = \lim_{j\to \infty} \lim_{k \to \infty} \langle [T_{1,\varepsilon_j}, [b, T_{2,\rho_k}]]f, g \rangle \\
&=- \lim_{j\to \infty} \lim_{k \to \infty}\int_{\R^{d}}\int_{\substack{y_1 \colon  |y_1 - x_1| > \epsilon_j \\ y_2\colon |y_2 - x_2| > \rho_k}} B(x,y)
K_1(x_1, y_1) K_2(x_2, y_2)f(y)g(x) \ud y \ud x.
\end{align*}
The proof is finished by a final application of the dominated convergence theorem.
\end{proof}
We now give the upper bound that matches Proposition \ref{prop:cbeta12}.
\begin{prop}
If $p_1 < q_1$ and $p_2 < q_2$, we have
\[
\|[T_1, [b, T_2]]\|_{L^{p_1}_{x_1}L^{p_2}_{x_2}\to L^{q_1}_{x_1}L^{q_2}_{x_2}}\lesssim \|b\|_{\dot C^{0, \beta_1}(\dot C^{0,\beta_2})}.
\]
\end{prop}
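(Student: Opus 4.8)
The plan is to reduce the estimate, via the kernel representation of Proposition~\ref{prop:pointwise}, to the Hardy--Littlewood--Sobolev inequality applied separately in each parameter. First I would fix $f, g \in L^\infty_c(\R^d)$ and abbreviate $\Lambda := \|b\|_{\dot C^{0, \beta_1}(\dot C^{0,\beta_2})}$. The crucial pointwise input is the ``double difference'' bound that comes straight from the definition of the $\dot C^{0,\beta_1,\beta_2}$ norm, namely $|B(x,y)| \le \Lambda\, |x_1-y_1|^{\beta_1}|x_2-y_2|^{\beta_2}$ for all $x=(x_1,x_2)$, $y=(y_1,y_2)$ -- note that $B(x,y)$ is precisely the mixed difference that annihilates functions of one variable only, so this is the natural space in which to measure $b$ here. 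Combining this with the size bounds $|K_i(x_i,y_i)| \lesssim |x_i-y_i|^{-d_i}$ gives
\[
|B(x,y) K_1(x_1,y_1) K_2(x_2,y_2)| \lesssim \Lambda\, |x_1-y_1|^{\beta_1-d_1}|x_2-y_2|^{\beta_2-d_2} .
\]
Because $0 < \beta_i = d_i(1/p_i-1/q_i) < d_i$, the right-hand side lies in $L^1_{\loc}(\R^{2d})$, so Proposition~\ref{prop:pointwise} applies and yields
\[
|\langle [T_1,[b,T_2]]f, g\rangle| \lesssim \Lambda \iint_{\R^d\times\R^d} \frac{|f(y)|\,|g(x)|}{|x_1-y_1|^{d_1-\beta_1}|x_2-y_2|^{d_2-\beta_2}}\ud y\ud x .
\]

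Next I would recognise the double integral as a composition of two one-parameter Riesz potentials: let $I_{\beta_i}^{(i)}$ denote convolution in $\R^{d_i}$ against $|\cdot|^{\beta_i-d_i}$, acting only in the $x_i$-variable, so that the right-hand side above is (a constant times) $I_{\beta_1}^{(1)} I_{\beta_2}^{(2)}$ applied to $|f|$. The point is that the identity $\beta_i = d_i(1/p_i-1/q_i)$ is exactly the Hardy--Littlewood--Sobolev exponent relation for $I_{\beta_i}^{(i)}\colon L^{p_i}(\R^{d_i})\to L^{q_i}(\R^{d_i})$. I would integrate out the inner variable first: for a.e.\ fixed $y_1$, $\Norm{I_{\beta_2}^{(2)}(|f|(y_1,\cdot))}{L^{q_2}}\lesssim \Norm{|f|(y_1,\cdot)}{L^{p_2}}$; then integrate the outer variable, treating $I_{\beta_1}^{(1)}$ as acting on $L^{q_2}(\R^{d_2})$-valued functions and using the trivial domination $\Norm{I_{\beta_1}^{(1)}H(x_1)}{L^{q_2}}\le I_{\beta_1}^{(1)}(\Norm{H}{L^{q_2}})(x_1)$ together with scalar HLS in $\R^{d_1}$. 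This gives
\[
\BNorm{\iint \frac{|f(y)|}{|x_1-y_1|^{d_1-\beta_1}|x_2-y_2|^{d_2-\beta_2}}\ud y}{L^{q_1}_{x_1}L^{q_2}_{x_2}} \lesssim \Norm{f}{L^{p_1}_{x_1}L^{p_2}_{x_2}} .
\]
Together with the previous display, H\"older's inequality for mixed-norm spaces gives $|\langle [T_1,[b,T_2]]f, g\rangle| \lesssim \Lambda \Norm{f}{L^{p_1}_{x_1}L^{p_2}_{x_2}}\Norm{g}{L^{q_1'}_{x_1}L^{q_2'}_{x_2}}$ for all $f, g\in L^\infty_c$; since $L^\infty_c$ is dense in both $L^{p_1}_{x_1}L^{p_2}_{x_2}$ and $L^{q_1'}_{x_1}L^{q_2'}_{x_2}$ (all exponents being finite), duality then yields the asserted operator bound.

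I do not expect a genuine obstacle in this regime -- it is precisely the one where the method produces a clean two-sided estimate matching Proposition~\ref{prop:cbeta12}. The two points to handle with a little care are (i) the verification that $B K_1 K_2 \in L^1_{\loc}(\R^{2d})$, which is immediate once one observes $\beta_i < d_i$, and (ii) the bookkeeping of the order of the mixed norms when iterating HLS: one must integrate the inner ($x_2$) variable before the outer ($x_1$) one and, for the outer step, invoke a vector-valued form of the Hardy--Littlewood--Sobolev inequality -- equivalently, scalar HLS combined with Minkowski's integral inequality.
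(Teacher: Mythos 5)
Your proposal is correct and is essentially the paper's own proof: both pass through the kernel representation of Proposition~\ref{prop:pointwise}, bound $|B(x,y)|$ by $\|b\|_{\dot C^{0,\beta_1}(\dot C^{0,\beta_2})}|x_1-y_1|^{\beta_1}|x_2-y_2|^{\beta_2}$, reduce to a composition of one-parameter fractional integrals $I_{1,\beta_1}I_{2,\beta_2}$, and iterate Hardy--Littlewood--Sobolev using Minkowski's integral inequality and positivity to commute $I_{1,\beta_1}$ past the inner $L^{q_2}_{x_2}$ norm, so that no genuinely vector-valued theory is needed. The only cosmetic difference is the order in which you describe applying the two fractional integrals; the ingredients and the estimate are identical.
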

\begin{proof}
By Proposition \ref{prop:pointwise} we have for $b \in \dot C^{0, \beta_1}(\dot C^{0,\beta_2})$ that
\begin{align*}
\big|\langle [T_1, [b, T_2]]f, g\rangle \big|\le \|b\|_{\dot C^{0, \beta_1}(\dot C^{0,\beta_2})}\langle I_{1,\beta_1}I_{2, \beta_2}|f|, |g|\rangle,
\end{align*}
where
\[
I_{1,\beta_1}f=\int_{\R^{d_1}}\frac{f(y_1)}{|x_1-y_1|^{d_1-\beta_1}}dy_1
\]is the fractional integral hitting on $\R^{d_1}$ and $I_{2,\beta_2}$ is defined similarly.  The proof is completed by using the well-known boundedness of the fractional integrals:
$$
I_{i, \beta_i} \colon L^{p_i} \to L^{q_i}.
$$
Indeed, notice that by Minkowski's integral inequality and the positivity of the operator
$$
\| I_{1, \beta_1} |h| \|_{L^{q_1}_{x_1} L^{q_2}_{x_2}} \le \| I_{1, \beta_1} \|h\|_{L^{q_2}_{x_2}} \|_{L^{q_1}_{x_1}}
\lesssim \| \|h\|_{L^{q_2}_{x_2}} \|_{L^{p_1}_{x_1}} = \|h\|_{L^{p_1}_{x_1} L^{q_2}_{x_2}},
$$
so that no vector-valued theory is really used.
\end{proof}

We move on to proving the remaining $\dot C^{0, \alpha}(X)$ type upper bounds. Again, a kernel representation is required.
\begin{prop}\label{prop:pointwiseV2}
If $b\in \dot C^{0, \alpha}_{x_1}(\BMO_{x_2})$ for some $\alpha > 0$,
then for all $f,g\in L_c^\infty(\R^d)$ we have
\begin{align*}
\bla [T_1, [b, T_2]]f, g\bra= - \iint_{\R^{d_1}\times \R^{d_1}} K_1(x_1, y_1) \big\langle  \big[ b(x_1)-b(y_1) , T_2\big]f(y_1), g(x_1) \big\rangle_{\R^{d_2}}.
\end{align*}
\end{prop}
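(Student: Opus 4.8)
The plan is to follow the scheme of the proof of Proposition \ref{prop:pointwise}, but to keep the inner commutator $[b,T_2]$ intact instead of also expanding the kernel $K_2$. First, writing $T_1 = T_{1,0} + a_1$ as in \eqref{eq:Tred}, I note that multiplication by the bounded function $a_1 = a_1(x_1)$ commutes with $[b,T_2]$: since $a_1$ depends only on $x_1$ while $T_2$ acts in $x_2$, one has $T_2(a_1 h) = a_1 T_2 h$, whence a direct computation gives $[a_1,[b,T_2]] = 0$ and therefore $[T_1,[b,T_2]] = [T_{1,0},[b,T_2]]$. The pairing $\bla [T_{1,0},[b,T_2]]f, g\bra$ is then recovered as a limit of the truncated pairings $\bla [T_{1,\varepsilon_j},[b,T_2]]f, g\bra$ along a suitable sequence $\varepsilon_j \to 0$, exactly as in Proposition \ref{prop:pointwise}: one uses that $T_{1,\varepsilon_j} \to T_{1,0}$ in the weak sense of \eqref{eq:Tred} (applied for a.e.\ $x_2$ to the functions $([b,T_2]f)(\cdot,x_2)$ and $g(\cdot,x_2)$), together with Cotlar's inequality, which dominates the truncations by $M_r T_1(\cdot) + M(\cdot)$ and so legitimizes a dominated convergence argument in the remaining $x_2$ integration.

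For the truncated operator the kernel representation is exact even for overlapping supports. Unwinding the definitions fiberwise, for a.e.\ $x_1$ one has $([b,T_2]f)(y_1,\cdot) = [b(y_1),T_2]\big(f(y_1)\big)$, so that $(T_{1,\varepsilon}[b,T_2]f)(x_1,\cdot) = \int_{|x_1-y_1|>\varepsilon} K_1(x_1,y_1)\,[b(y_1),T_2]\big(f(y_1)\big)\ud y_1$, while $([b,T_2]T_{1,\varepsilon}f)(x_1,\cdot) = [b(x_1),T_2]\big((T_{1,\varepsilon}f)(x_1,\cdot)\big) = \int_{|x_1-y_1|>\varepsilon} K_1(x_1,y_1)\,[b(x_1),T_2]\big(f(y_1)\big)\ud y_1$; the last identity interchanges the $y_1$-integral with the $x_2$-acting map $[b(x_1),T_2]$, which is justified by Fubini applied to the explicit kernel $K_2$ together with $b\in L^s_{\loc}$, just as the analogous interchanges in Proposition \ref{prop:pointwise}. (Note that the individual terms here involve the symbols $b(x_1,\cdot)$ and $b(y_1,\cdot)$, which need not lie in $\BMO$; only their difference will be controlled, and only in the final step.) Subtracting and pairing with $g$ gives
\begin{equation*}
\bla [T_{1,\varepsilon},[b,T_2]]f, g\bra = -\iint_{|x_1-y_1|>\varepsilon} K_1(x_1,y_1)\,\big\langle [b(x_1)-b(y_1),T_2]f(y_1),\, g(x_1)\big\rangle_{\R^{d_2}}\ud y_1\ud x_1 .
\end{equation*}

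It then remains to let $\varepsilon = \varepsilon_j \to 0$ and apply dominated convergence on $\R^{d_1}\times\R^{d_1}$. The hypothesis $b\in\dot C^{0,\alpha}_{x_1}(\BMO_{x_2})$ means precisely that $b(x_1,\cdot)-b(y_1,\cdot)\in\BMO(\R^{d_2})$ with $\Norm{b(x_1,\cdot)-b(y_1,\cdot)}{\BMO}\lesssim\Norm{b}{\dot C^{0,\alpha}_{x_1}(\BMO_{x_2})}|x_1-y_1|^{\alpha}$, so by the Coifman--Rochberg--Weiss upper bound in \eqref{eq:commutatorbb} the operator $[b(x_1)-b(y_1),T_2]$ is bounded on $L^{p_2}(\R^{d_2})$ with
\begin{equation*}
\begin{split}
\big|\big\langle [b(x_1)-b(y_1),T_2]f(y_1),\, g(x_1)\big\rangle_{\R^{d_2}}\big|
&\le \Norm{[b(x_1)-b(y_1),T_2]}{L^{p_2}\to L^{p_2}}\Norm{f(y_1)}{L^{p_2}}\Norm{g(x_1)}{L^{p_2'}} \\
&\lesssim \Norm{b}{\dot C^{0,\alpha}_{x_1}(\BMO_{x_2})}\,|x_1-y_1|^{\alpha}\,\Norm{f(y_1)}{L^{p_2}}\Norm{g(x_1)}{L^{p_2'}}.
\end{split}
\end{equation*}
Combined with $|K_1(x_1,y_1)|\lesssim|x_1-y_1|^{-d_1}$, the integrand above is dominated, uniformly in $\varepsilon$, by a constant times $|x_1-y_1|^{\alpha-d_1}\Norm{f(y_1)}{L^{p_2}}\Norm{g(x_1)}{L^{p_2'}}$, which is integrable over $\R^{d_1}\times\R^{d_1}$ because $\alpha>0$ (so the singularity at $x_1=y_1$ is harmless) and because $f$ and $g$ have compact support in the $y_1$ and $x_1$ variables respectively. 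Dominated convergence then yields the asserted identity with the measure $\ud y_1\ud x_1$. The main point requiring care is the fiberwise unwinding of $[b,T_2]$ and the interchange of the $y_1$-integration with the $x_2$-acting commutator in the formula for the truncations (and the attendant $L^s_{\loc}$ bookkeeping, handled as in Proposition \ref{prop:pointwise}); once those are in place, the passage to the limit is routine thanks to $\alpha>0$.
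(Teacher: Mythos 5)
Your proposal is correct and follows essentially the same route as the paper: reduce to $T_{1,0}$, represent the truncated pairings $\bla [T_{1,\varepsilon_j},[b,T_2]]f,g\bra$ by an exact kernel formula in the $x_1$-variables while keeping $[b,T_2]$ intact as an operator acting in $x_2$, and pass to the limit by dominated convergence using that $\alpha>0$ makes $|x_1-y_1|^{\alpha-d_1}$ locally integrable against the compact supports of $f$ and $g$. The only organizational difference is that the paper first replaces $b$ by $\wt b=b-\ave{b}_{V,2}$ and splits the commutator as $T_2[\wt b,T_{1,0}]-[\wt b,T_{1,0}]T_2$, verifying absolute integrability via an $L^p(V)$ bound on $\wt b(x_1,\cdot)-\wt b(y_1,\cdot)$ together with fractional integrals, whereas you keep $[b,T_2]$ whole and obtain the same domination from the Coifman--Rochberg--Weiss operator bound applied to the symbol $b(x_1,\cdot)-b(y_1,\cdot)\in\BMO(\R^{d_2})$; both are legitimate and of comparable rigor.
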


\begin{proof}
We write
\begin{align*}
 \bla [T_1, [b, T_2]]f, g\bra  = \bla [T_{1,0}, [b, T_{2}]]f, g\bra& =\bla [T_{1,0}, [\wt b, T_{2}]]f, g\bra \\
 &=  \bla T_{2}[\wt b, T_{1,0}]f, g\bra- \bla [\wt b, T_{1,0}] T_{2} f, g\bra,
\end{align*}
where $\wt b= b- \langle b\rangle_{V, 2}$ and $V \subset \R^{d_2}$ is a cube with $\supp_{\R^{d_2}} f\cup \supp_{\R^{d_2}} g\subset V $.
We can write
\begin{align*}
\bla T_{2}[\wt b, T_{1,0}]f, g\bra&=\bla  T_{1,0}f, \wt bT_2^* g\bra- \bla  T_{1,0}(\wt bf), T_2^* g\bra\\&=
\int_{\R^{d_2}} \lim_{j\to \infty}\big[\bla  T_{1,\varepsilon_j}f, \wt bT_2^* g\bra_{\R^{d_1}}- \bla  T_{1,\varepsilon_j}(\wt bf), T_2^* g\bra_{\R^{d_1}}\big]\\
&=\lim_{j\to \infty}\int_{\R^{d_2}} \big[\bla  T_{1,\varepsilon_j}f, \wt bT_2^* g\bra_{\R^{d_1}}- \bla  T_{1,\varepsilon_j}(\wt bf), T_2^* g\bra_{\R^{d_1}}\big],
\end{align*}
where in the last step we have used the dominated convergence theorem. Notice that we can now write the above as
\begin{align*}
 \lim_{j\to \infty} \int_{\R^{d_2}} \int_{\R^{d_1}}\int_{y_1: |x_1-y_1|>\varepsilon_j} K_1(x_1, y_1) (\wt b(x_1, y_2)-\wt b(y_1, y_2)) f(y_1, y_2) T_2^* g(x_1, y_2)\ud y_1\ud x_1 \ud y_2.
\end{align*}
Since $b\in \dot C^{0, \alpha}_{x_1}(\BMO_{x_2})$, by definition we know that for all $p \in (1,\infty)$ we have
\[
y_2 \mapsto \frac{\wt b(x_1, y_2)-\wt b(y_1, y_2)}{|x_1-y_1|^{\alpha}}=\frac{b(x_1, y_2)-b(y_1, y_2)-\langle b(x_1, \cdot)-b(y_1, \cdot)\rangle_{V}}{|x_1-y_1|^{\alpha}}\in L^p(V)
\] uniformly for all $x_1, y_1$ with $x_1\neq y_1$. Using the above and the boundedness of the fractional integrals we can verify that
\[
(x_1, y_1, y_2) \mapsto K_1(x_1, y_1) (\wt b(x_1, y_2)-\wt b(y_1, y_2)) f(y_1, y_2) T_2^* g(x_1, y_2) \in L^1(\R^{2d_1+d_2}).
\]Therefore, we have
\begin{align*}
\bla &T_{2}[\wt b, T_{1,0}]f, g\bra\\&=  \int_{\R^{2d_1}}\int_{\R^{d_2}} K_1(x_1, y_1)T_2  ((\wt b(x_1, \cdot)-\wt b(y_1, \cdot)) f(y_1, \cdot ))(y_2) g(x_1, y_2) \ud y_2
\ud y_1\ud x_1.
\end{align*}
We complete the proof by noting that similarly $\bla [\wt b, T_{1,0}]T_2 f, g\bra$ equals
$$
 \int_{\R^{2d_1}}\int_{\R^{d_2}} K_1(x_1, y_1) (\wt b(x_1, y_2)-\wt b(y_1, y_2)) T_2 f(y_1, y_2) g(x_1, y_2) \ud y_2
\ud y_1\ud x_1.
$$
\end{proof}
We obtain the following counterpart of Proposition \ref{prop:CBMO}.
\begin{prop}
If $p_1 < q_1$ and $p_2 = q_2$, then
$$
\|[T_1, [b, T_2]]\|_{L^{p_1}_{x_1}L^{p_2}_{x_2}\to L^{q_1}_{x_1}L^{p_2}_{x_2}} \lesssim 
\|b\|_{\dot C^{0, \beta_1}_{x_1}(\BMO_{x_2})}.
$$
If $p_1 = q_1$ and $p_2 < q_2$, then
$$
\|[T_1, [b, T_2]]\|_{L^{p_1}_{x_1}L^{p_2}_{x_2}\to L^{p_1}_{x_1}L^{q_2}_{x_2}} \lesssim 
\|b\|_{\dot C^{0, \beta_2}_{x_2}(\BMO_{x_1})}.
$$
\end{prop}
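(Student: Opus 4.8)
The plan is to realize each case as a bi-parameter version of a one-parameter commutator estimate: the Coifman--Rochberg--Weiss bound \eqref{eq:commutatorbb} handles the ``inner'' commutator, a fractional integral handles the ``outer'' operator, and the two are glued together much as in the proof following Proposition~\ref{prop:pointwise}. Throughout, $f,g\in L^\infty_c$ and the operator norms are computed by testing on such functions. For the first case ($p_1<q_1$, $p_2=q_2$), since $b\in\dot C^{0,\beta_1}_{x_1}(\BMO_{x_2})$ I would start from the kernel representation of Proposition~\ref{prop:pointwiseV2},
\[
\langle[T_1,[b,T_2]]f,g\rangle=-\iint_{\R^{d_1}\times\R^{d_1}}K_1(x_1,y_1)\big\langle[b(x_1,\cdot)-b(y_1,\cdot),T_2]f(y_1,\cdot),g(x_1,\cdot)\big\rangle_{\R^{d_2}}\ud y_1\ud x_1 .
\]
For fixed $x_1\neq y_1$ the inner operator is a one-parameter commutator on $\R^{d_2}$, hence bounded on $L^{p_2}(\R^{d_2})$ with norm $\lesssim\|b(x_1,\cdot)-b(y_1,\cdot)\|_{\BMO(\R^{d_2})}\le\|b\|_{\dot C^{0,\beta_1}_{x_1}(\BMO_{x_2})}|x_1-y_1|^{\beta_1}$ by \eqref{eq:commutatorbb}. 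Taking absolute values, using $|K_1(x_1,y_1)|\,|x_1-y_1|^{\beta_1}\lesssim|x_1-y_1|^{\beta_1-d_1}$ and H\"older in $x_2$, one is reduced to $\langle I_{1,\beta_1}F,G\rangle$ with $F(x_1):=\|f(x_1,\cdot)\|_{L^{p_2}_{x_2}}$ and $G(x_1):=\|g(x_1,\cdot)\|_{L^{p_2'}_{x_2}}$ scalar functions on $\R^{d_1}$; since $\beta_1=d_1(1/p_1-1/q_1)$ we have $I_{1,\beta_1}\colon L^{p_1}(\R^{d_1})\to L^{q_1}(\R^{d_1})$, so $\langle I_{1,\beta_1}F,G\rangle\lesssim\|F\|_{L^{p_1}}\|G\|_{L^{q_1'}}=\|f\|_{L^{p_1}_{x_1}L^{p_2}_{x_2}}\|g\|_{L^{q_1'}_{x_1}L^{p_2'}_{x_2}}$, and taking the supremum over $g$ in the unit ball of $(L^{q_1}_{x_1}L^{p_2}_{x_2})^{*}=L^{q_1'}_{x_1}L^{p_2'}_{x_2}$ gives the first claim, with no genuinely vector-valued boundedness needed.

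For the second case ($p_1=q_1$, $p_2<q_2$) I would first use that $T_1,T_2$ act on independent variables, so $[T_1,[b,T_2]]=[T_2,[b,T_1]]$, and then apply the coordinate-swapped version of Proposition~\ref{prop:pointwiseV2} (legitimate since $b\in\dot C^{0,\beta_2}_{x_2}(\BMO_{x_1})$) to obtain
\[
\langle[T_1,[b,T_2]]f,g\rangle=-\iint_{\R^{d_2}\times\R^{d_2}}K_2(x_2,y_2)\big\langle[b(\cdot,x_2)-b(\cdot,y_2),T_1]f(\cdot,y_2),g(\cdot,x_2)\big\rangle_{\R^{d_1}}\ud y_2\ud x_2 .
\]
Now the inner operator is a commutator on $\R^{d_1}$, bounded on $L^{p_1}(\R^{d_1})$ with norm $\lesssim\|b\|_{\dot C^{0,\beta_2}_{x_2}(\BMO_{x_1})}|x_2-y_2|^{\beta_2}$ by \eqref{eq:commutatorbb}, and the outer kernel together with this decay produces a fractional integral $I_{2,\beta_2}$ in the $x_2$-variable, which maps $L^{p_1}_{x_1}L^{p_2}_{x_2}\to L^{p_1}_{x_1}L^{q_2}_{x_2}$ since $\beta_2=d_2(1/p_2-1/q_2)$.

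The subtle point, and what I expect to be the main obstacle, is that unlike in the first case these two ingredients cannot simply be multiplied together. Invoking \eqref{eq:commutatorbb} is an $L^{p_1}(\R^{d_1})$-\emph{norm} statement, so it forces the $x_1$-integration to be performed \emph{before} the $x_2$-integration; done naively this replaces the source norm $\|f\|_{L^{p_1}_{x_1}L^{p_2}_{x_2}}$ by $\|f\|_{L^{p_2}_{x_2}L^{p_1}_{x_1}}$ and the dual norm by its coordinate-swapped counterpart, and one checks that Minkowski's integral inequality cannot compensate both mismatches simultaneously (it would need both $p_1\le p_2$ and $p_1\ge q_2$, which is impossible since $p_2<q_2$). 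Hence the clean one-parameter reduction genuinely breaks down, and the proof must instead keep the $\R^{d_1}$-variable vectorial throughout --- for instance by replacing the Coifman--Rochberg--Weiss \emph{norm} bound on $[b(\cdot,x_2)-b(\cdot,y_2),T_1]f(\cdot,y_2)$ by a pointwise, Lerner-type sparse domination that retains its $x_1$-dependence, together with control of the resulting sparse data uniform in $(x_2,y_2)$, so that the $x_2$-integral can still be absorbed by $I_{2,\beta_2}$ acting in the $x_2$-slot of $L^{p_1}_{x_1}L^{p_2}_{x_2}$. Making this interplay of the H\"older decay in $x_2$ with the singular integral $T_2$ rigorous, without ever collapsing the $x_1$-integration, is the crux of the matter; it is precisely the phenomenon that forces the unexpected, ``switched'' space $\dot C^{0,\beta_2}_{x_2}(\BMO_{x_1})$ rather than the naive $\BMO_{x_1}(\dot C^{0,\beta_2}_{x_2})$.
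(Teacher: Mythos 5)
Your proposal is correct and matches the paper's own proof in both cases. The first case is exactly the paper's argument via Proposition~\ref{prop:pointwiseV2}, the H\"older hypothesis, the one-parameter Coifman--Rochberg--Weiss bound \eqref{eq:commutatorbb} on $L^{p_2}_{x_2}$, and the fractional integral $I_{1,\beta_1}\colon L^{p_1}\to L^{q_1}$. Your diagnosis of the obstruction in the second case is also exactly right: applying \eqref{eq:commutatorbb} as a norm bound on $L^{p_1}_{x_1}$ collapses the $x_1$-integral prematurely, and Minkowski would require both $p_1\le p_2$ and $p_1\ge q_2$, which $p_2<q_2$ rules out. The fix you propose is precisely the one the paper uses: instead of the operator norm bound, invoke the bilinear sparse (maximal-function) domination of \cite{LOR1},
\begin{equation*}
\big|\big\langle [ b(\cdot,x_2)-b(\cdot,y_2), T_1]f(\cdot,y_2), g(\cdot,x_2)\big\rangle_{\R^{d_1}}\big|
\lesssim \|b(\cdot,x_2)-b(\cdot,y_2)\|_{\BMO(\R^{d_1})}\int_{\R^{d_1}} M_r^1 f(\cdot,y_2)\,M_r^1 g(\cdot,x_2),
\end{equation*}
which keeps the $x_1$-variable pointwise and is uniform in $(x_2,y_2)$. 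One then chooses $1<r<\min\{p_1,p_2,p_1',q_2'\}$, lets $I_{2,\beta_2}$ absorb the $x_2$-singularity, and closes with mixed-norm bounds on $M_r^1$. So your outline of the second case is the paper's proof; the only thing left to you is to write out these last three sentences as a display, which is routine.
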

\begin{proof}
Let first $p_1 < q_1$ and $p_2 = q_2$.
For $f, g\in L_c^\infty$ we have by Proposition \ref{prop:pointwiseV2} that
\begin{align*}
  &\big|\bla [T_1, [b, T_2]]f, g\bra\big|\\&= \left|\iint_{\R^{d_1}\times \R^{d_1}}|x_1-y_1|^{\beta_1}K_1(x_1, y_1) \Big\langle  \Big[\frac{b(x_1)-b(y_1)}{|x_1-y_1|^{\beta_1}}, T_2\Big]f(y_1), g(x_1)\Big\rangle_{\R^{d_2}}\right|\\
  &\lesssim \iint_{\R^{d_1}\times \R^{d_1}}|x_1-y_1|^{\beta_1-d_1}  \BNorm{ \frac{b(x_1)-b(y_1)}{|x_1-y_1|^{\beta_1}} }{\BMO_{x_2}} 
     \Norm{f(y_1)}{L_{x_2}^{p_2}} \Norm{g(x_1)}{L_{x_2}^{p_2'}}  \\
  &\lesssim \|b\|_{\dot C^{0, \beta_1}_{x_1}(\BMO_{x_2})} \|f\|_{L^{p_1}_{x_1}L^{p_2}_{x_2}}\|g\|_{L^{q_1'}_{x_1}L^{p_2'}_{x_2}},
\end{align*}
where we have used the well-known boundedness of one parameter commutators and the fractional integrals.

Next, let $p_1 = q_1$ and $p_2 < q_2$. Similarly as in Proposition \ref{prop:pointwiseV2}, for $f, g\in L_c^\infty$ we may write
 \begin{align*}
\bla [T_1, [b, T_2]]f, g\bra= - \iint_{\R^{d_2}\times \R^{d_2}} K_2(x_2, y_2) \big\langle  \big[ b(x_2)-b(y_2) , T_1\big]f(y_2), g(x_2) \big\rangle_{\R^{d_1}}.
 \end{align*}
By the sparse domination of commutators -- see \cite{LOR1} -- we have for all $r \in (1,\infty)$ that
\begin{align*}
  \big|\big\langle  \big[ b(x_2)-b(y_2) &, T_1\big]f(y_2), g(x_2) \big\rangle_{\R^{d_1}} \big|\\
  & \lesssim \| b(\cdot, x_2)-b(\cdot, y_2)\|_{\BMO(\R^{d_1})}\int_{\R^{d_1}}M_r^1 f(\cdot, y_2) M_r^1 g(\cdot, x_2).
\end{align*}
Choosing $1<r<\min\{p_1, p_2, p_1', q_2'\}$ and using the above we have that $\big|\bla [T_1, [b, T_2]]f, g\bra\big|$ can be dominated by
\begin{align*}
\|b\|_{\dot C^{0, \beta_2}_{x_2}(\BMO_{x_1})}\iint_{\R^{d_2}\times \R^{d_2}}& |K_2(x_2, y_2)| |x_2-y_2|^{\beta_2}\int_{\R^{d_1}}M_r^1 f(x_1, y_2) M_r^1 g(x_1, x_2) \\
 &\le \|b\|_{\dot C^{0, \beta_2}_{x_2}(\BMO_{x_1})} \|f\|_{L^{p_1}_{x_1}L^{p_2}_{x_2}}\|g\|_{L^{p_1'}_{x_1}L^{q_2'}_{x_2}},
\end{align*} where we have used the boundedness of fractional integrals and mixed norm estimates of $M_r^1$. This completes the proof.
\end{proof}
We move on to the upper bounds that are related to the H\"older space estimates of Proposition \ref{prop:CLrBMOLr}.
\begin{prop}
If $p_1 < q_1$ and $p_2 > q_2$ then
$$
\|[T_1, [b, T_2]]\|_{L^{p_1}_{x_1}L^{p_2}_{x_2}\to L^{q_1}_{x_1}L^{q_2}_{x_2}} \lesssim \|b\|_{\dot C^{0,\beta_1}_{x_1}(\dot L^{r_2}_{x_2})} 
$$
If $p_1 > q_1$ and $p_2 < q_2$ then
$$
\|[T_1, [b, T_2]]\|_{L^{p_1}_{x_1}L^{p_2}_{x_2}\to L^{q_1}_{x_1}L^{q_2}_{x_2}} \lesssim \|b\|_{\dot L^{r_1}_{x_1}(\dot C^{0,\beta_2}_{x_2})}.
$$
\end{prop}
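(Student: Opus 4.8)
\emph{Overall plan.} In both cases I would follow the template of Proposition~\ref{prop:pointwiseV2}: first represent the bi-commutator through a one-parameter kernel identity \emph{in the $x_1$-variable},
\begin{equation*}
\bla [T_1,[b,T_2]]f,g\bra=-\iint_{\R^{d_1}\times\R^{d_1}}K_1(x_1,y_1)\,\bla[b(x_1)-b(y_1),T_2]f(y_1),g(x_1)\bra_{\R^{d_2}}\ud y_1\ud x_1,
\end{equation*}
then reduce the inner pairing to a one-parameter $T_2$-commutator estimate in $x_2$, and finally run a one-parameter $T_1$-argument in $x_1$. It is essential that the expansion be taken in $x_1$ rather than in $x_2$: this keeps the $L^{p_2}_{x_2}$- and $L^{q_2'}_{x_2}$-norms of $f$ and $g$ innermost, so that the output lands in $L^{q_1}_{x_1}L^{q_2}_{x_2}$ with no loss, whereas an expansion in $x_2$ would produce the reversed mixed norms $L^{q_2}_{x_2}L^{q_1}_{x_1}$, which Minkowski's inequality does \emph{not} permit comparing with the target in the generality at hand. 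The representation itself is justified exactly as in Propositions~\ref{prop:pointwise} and~\ref{prop:pointwiseV2}, by passing through maximal truncations, dominated convergence and Fubini after subtracting a suitable average of $b$ in $x_2$ to make all terms absolutely convergent.

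\emph{The case $p_1<q_1$, $p_2>q_2$.} Here $b\in\dot C^{0,\beta_1}_{x_1}(\dot L^{r_2}_{x_2})$ supplies a genuine pointwise bound $\Norm{b(x_1,\cdot)-b(y_1,\cdot)}{\dot L^{r_2}_{x_2}}\le\Norm{b}{\dot C^{0,\beta_1}_{x_1}(\dot L^{r_2}_{x_2})}\abs{x_1-y_1}^{\beta_1}$, so I would argue directly. For fixed $x_1\neq y_1$, the one-parameter off-diagonal estimate of \cite{Hy5} (available since $q_2<p_2$ and $1/r_2=1/q_2-1/p_2$) gives
\begin{equation*}
\Babs{\bla[b(x_1)-b(y_1),T_2]f(y_1),g(x_1)\bra_{\R^{d_2}}}\lesssim\Norm{b(x_1,\cdot)-b(y_1,\cdot)}{\dot L^{r_2}_{x_2}}\Norm{f(y_1,\cdot)}{L^{p_2}_{x_2}}\Norm{g(x_1,\cdot)}{L^{q_2'}_{x_2}}.
\end{equation*}
Inserting the pointwise bound together with $\abs{K_1(x_1,y_1)}\lesssim\abs{x_1-y_1}^{-d_1}$, the outer integral is dominated by $\bla I_{1,\beta_1}\psi,\phi\bra_{\R^{d_1}}$, where $I_{1,\beta_1}$ is the fractional integral of order $\beta_1$ on $\R^{d_1}$, $\psi(y_1):=\Norm{f(y_1,\cdot)}{L^{p_2}_{x_2}}$ and $\phi(x_1):=\Norm{g(x_1,\cdot)}{L^{q_2'}_{x_2}}$. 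Since $\beta_1=d_1(1/p_1-1/q_1)$, we have $I_{1,\beta_1}\colon L^{p_1}(\R^{d_1})\to L^{q_1}(\R^{d_1})$, and Hölder's inequality bounds everything by $\Norm{b}{\dot C^{0,\beta_1}_{x_1}(\dot L^{r_2}_{x_2})}\Norm{f}{L^{p_1}_{x_1}L^{p_2}_{x_2}}\Norm{g}{L^{q_1'}_{x_1}L^{q_2'}_{x_2}}$; the $x_2$-norms being already in the right (innermost) position, no Minkowski swap is needed, and duality concludes.

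\emph{The case $p_1>q_1$, $p_2<q_2$.} Now $b\in\dot L^{r_1}_{x_1}(\dot C^{0,\beta_2}_{x_2})$ only controls $x_1\mapsto b(x_1,\cdot)$ in an $L^{r_1}$-averaged sense, so the naive pointwise extraction breaks down and I would read the $x_1$-representation structurally. Since $T_1$ and $T_2$ act on different variables, $[T_1,[b,T_2]]=[T_1,M_S]$, where $M_S$ is the fibrewise operator $(M_Sf)(x_1):=S_{x_1}\big(f(x_1,\cdot)\big)$ attached to the operator-valued symbol $x_1\mapsto S_{x_1}:=[b(x_1,\cdot),T_2]\in\bddlin\big(L^{p_2}(\R^{d_2}),L^{q_2}(\R^{d_2})\big)$. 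Janson's one-parameter Hölder estimate \cite{Jan} ($p_2<q_2$, $\beta_2=d_2(1/p_2-1/q_2)$) gives $\Norm{[c,T_2]}{L^{p_2}_{x_2}\to L^{q_2}_{x_2}}\lesssim\Norm{c}{\dot C^{0,\beta_2}_{x_2}}$; choosing a near-optimal $\dot C^{0,\beta_2}_{x_2}$-valued constant $c$ for $\Norm{b}{\dot L^{r_1}_{x_1}(\dot C^{0,\beta_2}_{x_2})}$ and taking the operator-valued constant $[c,T_2]$, one gets $\Norm{S}{\dot L^{r_1}_{x_1}(\bddlin(L^{p_2},L^{q_2}))}\lesssim\Norm{b}{\dot L^{r_1}_{x_1}(\dot C^{0,\beta_2}_{x_2})}$. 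It then remains to invoke the operator-valued off-diagonal Coifman--Rochberg--Weiss estimate --- the vector-valued analogue of the $q<p$ bound of \cite{Hy5}, cf.\ Section~\ref{sec:vecval} --- which asserts, for $q_1<p_1$, $1/r_1=1/q_1-1/p_1$ and Banach spaces $X,Y$, that
\begin{equation*}
\Norm{[T_1,M_S]}{L^{p_1}_{x_1}(X)\to L^{q_1}_{x_1}(Y)}\lesssim\Norm{S}{\dot L^{r_1}_{x_1}(\bddlin(X,Y))}
\end{equation*}
(only the distance of $S$ to a constant enters, since $[T_1,M_A]=0$ for any constant operator $A$). Applying this with $X=L^{p_2}(\R^{d_2})$ and $Y=L^{q_2}(\R^{d_2})$ yields the claim; note that here $M_S$ keeps $x_1$ outermost, which is why this route respects the mixed norm $L^{p_1}_{x_1}L^{p_2}_{x_2}\to L^{q_1}_{x_1}L^{q_2}_{x_2}$.

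\emph{Main obstacle.} The one real difficulty is the operator-valued off-diagonal commutator estimate used in the second case. One must check that the approximate weak factorization proof of the scalar bound $\Norm{[T,b]}{L^p\to L^q}\lesssim\Norm{b}{\dot L^r}$ from \cite{Hy5} --- equivalently, the oscillation/sparse description of $\dot L^r$ in Proposition~\ref{prop:LrOsc} --- goes through when the symbol takes values in $\bddlin(X,Y)$: the relevant identity is then of the type~\eqref{eq:connection} with $b$ replaced by $S$ and the pointwise products replaced by operator-valued pairings $T_1f\otimes g-f\otimes T_1^{*}g$, which is still meaningful, but the $L^{p_1}(X)\to L^{q_1}(Y)$ bookkeeping --- with $X=L^{p_2}$ and $Y=L^{q_2}$ genuinely distinct function spaces --- must be tracked carefully; this is exactly the place where, if preferred, one quotes the explicit vector-valued statement from Section~\ref{sec:vecval}. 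The kernel-representation justifications, by contrast, are routine and identical to those in Propositions~\ref{prop:pointwise} and~\ref{prop:pointwiseV2}.
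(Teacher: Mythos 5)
Your proof of the first case ($p_1<q_1$, $p_2>q_2$) is exactly the paper's: the $x_1$-kernel representation, then the elementary $\dot L^{r_2}$-commutator bound $\Norm{[c,T_2]f}{L^{q_2}}\lesssim\Norm{c}{\dot L^{r_2}}\Norm{f}{L^{p_2}}$ in the inner variable, then the fractional integral $I_{1,\beta_1}$ in the outer variable. Nothing to add there.

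For the second case ($p_1>q_1$, $p_2<q_2$), your route is in substance the same as the paper's, only dressed up. The paper never passes through a kernel representation or an operator-valued commutator theorem: it simply writes $[T_1,[b,T_2]]=[T_1,[b-c_2,T_2]]$ for any $c_2=c_2(x_2)$, applies the triangle inequality to get $\Norm{T_1[b-c_2,T_2]f}{}+\Norm{[b-c_2,T_2]T_1f}{}$, bounds $T_1$ in mixed norm, and then uses Janson's one-parameter bound $\Norm{[b(x_1,\cdot)-c_2,T_2]f(x_1,\cdot)}{L^{q_2}_{x_2}}\lesssim\Norm{b(x_1,\cdot)-c_2}{\dot C^{0,\beta_2}_{x_2}}\Norm{f(x_1,\cdot)}{L^{p_2}_{x_2}}$ followed by H\"older in $x_1$ (using $1/q_1=1/r_1+1/p_1$) and an infimum over $c_2$. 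This is exactly what your ``operator-valued off-diagonal CRW estimate'' for $[T_1,M_S]$ unpacks to: subtracting a constant operator $A$, writing $[T_1,M_{S-A}]f=T_1(M_{S-A}f)-M_{S-A}(T_1f)$, bounding each piece by H\"older with exponents $1/q_1=1/r_1+1/p_1$.

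This brings up the one place where your write-up goes wrong, in the ``Main obstacle'' discussion. You describe the $q<p$ upper bound $\Norm{[T,b]}{L^p\to L^q}\lesssim\Norm{b}{\dot L^r}$ from \cite{Hy5} as something obtained by approximate weak factorization and Proposition~\ref{prop:LrOsc}, and then worry whether the AWF argument survives the passage to $\bddlin(X,Y)$-valued symbols. That is a misattribution: AWF and the oscillatory $\dot L^r$ description are used in \cite{Hy5} (and here) only for the \emph{lower} bound. The upper bound at $q<p$ uses no commutator cancellation at all --- it is triangle inequality plus H\"older, exactly as above --- and its operator-valued version is therefore immediate, requiring only that $T_1$ be bounded on $L^{p_1}(X)$ and $L^{q_1}(Y)$ (true here with $X=L^{p_2}$, $Y=L^{q_2}$). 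So the step you flag as the real difficulty is in fact the most elementary part of the argument, and no appeal to Section~\ref{sec:vecval} is needed; the material there concerns the diagonal $\BMO_{\textup{prod}}$ case, which is a genuinely harder vector-valued problem.
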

\begin{proof}
Let $p_1 < q_1$ and $p_2 > q_2$. Similarly as above for $f, g\in L_c^\infty$ we may write
$$
\bla [T_1, [b, T_2]]f, g\bra= - \iint_{\R^{d_1}\times \R^{d_1}} K_1(x_1, y_1) \big\langle  \big[ b(x_1)-b(y_1) , T_2\big]f(y_1), g(x_1) \big\rangle_{\R^{d_2}}.
$$
This gives that
\begin{align*}
  &\big|\bla [T_1, [b, T_2]]f, g\bra\big|\\&= \left|\iint_{\R^{d_1}\times \R^{d_1}}|x_1-y_1|^{\beta_1}K_1(x_1, y_1) \Big\langle  \Big[\frac{b(x_1)-b(y_1)}{|x_1-y_1|^{\beta_1}}, T_2\Big]f(y_1), g(x_1)\Big\rangle_{\R^{d_2}}\right|\\
  &\lesssim \iint_{\R^{d_1}\times \R^{d_1}}|x_1-y_1|^{\beta_1-d_1}
   \BNorm{\frac{b(x_1)-b(y_1)}{|x_1-y_1|^{\beta_1}}}{\dot L^{r_2}_{x_2}} 
   \Norm{f(y_1)}{L^{p_2}_{x_2}}\Norm{g(x_1)}{L^{q_2'}_{x_2}} \\
  &\lesssim \|b\|_{\dot C^{0, \beta_1}_{x_1}(\dot L^{r_2}_{x_2})} \|f\|_{L^{p_1}_{x_1}L^{p_2}_{x_2}}\|g\|_{L^{q_1'}_{x_1}L^{q_2'}_{x_2}},
\end{align*}
where we have used the elementary inequality \eqref{eq:propofbT2}
and the boundedness of fractional integrals.

Next, let $p_1 > q_1$ and $p_2 < q_2$.
We simply estimate
\begin{align*}
\| [T_1, [b, T_2]] f\|_{L^{q_1}_{x_1}L^{q_2}_{x_2}} &= \| [T_1, [b-c_2, T_2]] f\|_{L^{q_1}_{x_1}L^{q_2}_{x_2}}\\ &\le \| T_1 [b-c_2, T_2] f\|_{L^{q_1}_{x_1}L^{q_2}_{x_2}}+ \|  [b-c_2, T_2] T_1 f\|_{L^{q_1}_{x_1}L^{q_2}_{x_2}},
\end{align*}
use the mixed norm estimates of $T_1$, and the estimate
\[
\| [b-c_2, T_2]f\|_{L^{q_1}_{x_1}L^{q_2}_{x_2}}\lesssim \left\|\|b-c_2\|_{\dot C^{0, \beta_2}_{x_2}}\|f\|_{L^{p_2}_{x_2}}\right\|_{L^{q_1}_{x_1}}\le \|b-c_2\|_{L^{r_1}_{x_1}(\dot C^{0, \beta_2}_{x_2})}  \|f\|_{L^{p_1}_{x_1}L^{p_2}_{x_2}}.
\]
To end the proof we take the infimum over suitable $c_2(x) = c_2(x_2)$.
\end{proof}

\subsection{$\BMO$ and $\dot L^r$}
Proposition \ref{prop:CLrBMOLr} recorded the oscillatory lower bounds related to the space $\BMO(\dot L^r)$. Here are the related upper bounds.
\begin{prop}\label{prop:usesVecVal}
If $p_1 = q_1$ and $p_2 > q_2$, then
$$
\|[T_1, [b, T_2]]\|_{L^{p_1}_{x_1}L^{p_2}_{x_2}\to L^{p_1}_{x_1}L^{q_2}_{x_2}}\lesssim \|b\|_{\BMO_{x_1}(\dot L^{r_2}_{x_2})}.
$$
If $p_1>q_1$ and $p_2=q_2$, then
$$
\|[T_1, [b, T_2]]\|_{L^{p_1}_{x_1}L^{p_2}_{x_2}\to L^{q_1}_{x_1}L^{p_2}_{x_2}} \lesssim \|b\|_{\dot L^{r_1}_{x_1}(\BMO_{x_2})}.
$$
\end{prop}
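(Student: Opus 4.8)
The plan is to treat the two displayed estimates separately, since they are not symmetric: the first ($p_1>q_1$, $p_2=q_2$) asks for the target space in its ``naive'' order $\dot L^{r_1}_{x_1}(\BMO_{x_2})$ and can be obtained by a one-parameter argument together with mixed-norm bounds for $T_1$, whereas the second ($p_1=q_1$, $p_2>q_2$) involves the ``switched'' order $\BMO_{x_1}(\dot L^{r_2}_{x_2})$ and will be deduced from the vector-valued machinery of Section~\ref{sec:vecval}, in particular Theorem~\ref{thm:vecval2par}.

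For the case $p_1>q_1$, $p_2=q_2$ I would argue as in the preceding proposition. Since $[T_1,[b,T_2]]$ does not see functions of $x_2$ alone, we may replace $b$ by $b-c_2$ for an arbitrary $c_2=c_2(x_2)$ and split
\[
[T_1,[b-c_2,T_2]]f = T_1\big([b-c_2,T_2]f\big) - [b-c_2,T_2]\big(T_1 f\big).
\]
For a.e.\ fixed $x_1$, $[b(x_1,\cdot)-c_2,T_2]$ is a one-parameter commutator in $x_2$, so by the Coifman--Rochberg--Weiss estimate \eqref{eq:commutatorbb} (here $p_2=q_2$) we have $\Norm{[b(x_1,\cdot)-c_2,T_2]h(x_1,\cdot)}{L^{p_2}_{x_2}}\lesssim \Norm{b(x_1,\cdot)-c_2}{\BMO_{x_2}}\Norm{h(x_1,\cdot)}{L^{p_2}_{x_2}}$; taking the $L^{q_1}_{x_1}$ norm and applying H\"older's inequality in $x_1$ with $1/q_1=1/r_1+1/p_1$ gives
\[
\Norm{[b-c_2,T_2]h}{L^{q_1}_{x_1}L^{p_2}_{x_2}}\lesssim \Norm{b-c_2}{L^{r_1}_{x_1}(\BMO_{x_2})}\Norm{h}{L^{p_1}_{x_1}L^{p_2}_{x_2}}.
\]
Using this with $h=f$ and with $h=T_1 f$, together with the $L^{s}_{x_1}(L^{p_2}_{x_2})$-boundedness of the CZO $T_1$ for $s\in\{p_1,q_1\}$ (valid since $L^{p_2}$ is UMD), controls both terms by $\Norm{b-c_2}{L^{r_1}_{x_1}(\BMO_{x_2})}\Norm{f}{L^{p_1}_{x_1}L^{p_2}_{x_2}}$, and taking the infimum over $c_2$ closes this case.

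For the case $p_1=q_1$, $p_2>q_2$ the naive splitting breaks down, and more seriously any argument that discards the cancellation of $T_1$ is doomed, since the positive bilinear form $\iint |x_1-y_1|^{-d_1}\Norm{b(x_1,\cdot)-b(y_1,\cdot)}{\dot L^{r_2}_{x_2}}\,F(y_1)G(x_1)$ is not bounded for $\BMO$-type symbols. The plan is therefore to keep the $x_1$-commutator structure intact. Comparing kernels via Proposition~\ref{prop:pointwise}, one verifies the operator identity $[T_1,[b,T_2]]=-[\mathcal{B},T_1]$, where $\mathcal{B}(x_1):=[b(x_1,\cdot),T_2]\in\mathcal{L}\big(L^{p_2}(\R^{d_2}),L^{q_2}(\R^{d_2})\big)$ and $[\mathcal{B},T_1]$ is the commutator of $T_1$ with pointwise multiplication by this operator-valued symbol. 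By the one-parameter off-diagonal estimate of \cite{Hy5} in $x_2$ (recall $q_2<p_2$, $1/r_2=1/q_2-1/p_2$), $\Norm{\mathcal{B}(x_1)-\mathcal{B}(y_1)}{\mathcal{L}(L^{p_2}_{x_2},L^{q_2}_{x_2})}=\Norm{[b(x_1,\cdot)-b(y_1,\cdot),T_2]}{L^{p_2}_{x_2}\to L^{q_2}_{x_2}}\lesssim \Norm{b(x_1,\cdot)-b(y_1,\cdot)}{\dot L^{r_2}_{x_2}}$, and averaging over a cube $I_1$ and using the triangle inequality gives $\Norm{\mathcal{B}}{\BMO(\mathcal{L}(L^{p_2}_{x_2},L^{q_2}_{x_2}))}\lesssim \Norm{b}{\BMO_{x_1}(\dot L^{r_2}_{x_2})}$. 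It then remains to invoke the operator-valued commutator estimate of Section~\ref{sec:vecval} (Theorem~\ref{thm:vecval2par}), which — using that $L^{p_2}_{x_2}$ and $L^{q_2}_{x_2}$ are UMD and that $p_1=q_1$ — yields $\Norm{[\mathcal{B},T_1]}{L^{p_1}_{x_1}(L^{p_2}_{x_2})\to L^{p_1}_{x_1}(L^{q_2}_{x_2})}\lesssim \Norm{\mathcal{B}}{\BMO(\mathcal{L}(L^{p_2}_{x_2},L^{q_2}_{x_2}))}$, and hence the claim. The step I expect to be the real obstacle is precisely this last one — an off-diagonal, \emph{operator-valued} Coifman--Rochberg--Weiss estimate, with a symbol mapping a larger Lebesgue space $L^{p_2}$ into a smaller one $L^{q_2}$; it does not follow from the scalar theory, and the reduction above is exactly what makes Theorem~\ref{thm:vecval2par} applicable, all the remaining steps (the operator identity, the $\BMO$ bound for $\mathcal{B}$, the mixed-norm boundedness of $T_1$) being routine.
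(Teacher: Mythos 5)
Your first case ($p_1>q_1$, $p_2=q_2$) is exactly the paper's argument: split $[T_1,[b-c_2,T_2]]f$ in the obvious way, apply the one-parameter CRW bound slice-wise in $x_2$, use H\"older with $1/q_1=1/r_1+1/p_1$ and the mixed-norm boundedness of $T_1$, and take the infimum over $c_2(x_2)$.

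For the second case ($p_1=q_1$, $p_2>q_2$) the idea is right, but you cite the wrong black box. Theorem~\ref{thm:vecval2par} is the \emph{bi-parameter} estimate $\|[T_1,[b,T_2]]f\|_{L^{p_1}L^{p_2}(X_3)}\lesssim\|b\|_{\BMO_{\rm prod}(X_1)}\|f\|_{L^{p_1}L^{p_2}(X_2)}$ over a compatible triple of function \emph{lattices}; it does not apply to your one-parameter commutator $[\mathcal B,T_1]$ with the non-lattice symbol space $\mathcal L(L^{p_2},L^{q_2})$. What you actually need is the \emph{one-parameter} vector-valued CRW estimate \eqref{eq:vecval1par} from the beginning of Section~\ref{sec:vecval}: there $X_1$ is only required to be a Banach space (so $X_1=\mathcal L(L^{p_2}_{x_2},L^{q_2}_{x_2})$ is allowed), $X_2=L^{p_2}_{x_2}$ and $X_3=L^{q_2}_{x_2}$ are UMD, and the bilinear map is operator evaluation; this gives precisely $\|[\mathcal B,T_1]\|_{L^{p_1}(L^{p_2})\to L^{p_1}(L^{q_2})}\lesssim\|\mathcal B\|_{\BMO(\mathcal L(L^{p_2},L^{q_2}))}$. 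Modulo this correction the route you propose is a legitimate (and cleaner) abstraction of what the paper actually does. The paper chooses not to invoke \eqref{eq:vecval1par} as a black box but instead proves its content from scratch in this specific case: it establishes the sharp maximal bound \eqref{eq:modvec}, $([T_1,[b,T_2]]f)^{\sharp}\lesssim\|b\|_{\BMO_{x_1}(\dot L^{r_2}_{x_2})}\,(M_{1+\vare}\|T_1f\|_{L^{p_2}_{x_2}}+M_{1+\vare}\|f\|_{L^{p_2}_{x_2}})$, where the sharp maximal function is taken with respect to the $L^{q_2}_{x_2}$-norm, keeping $T_2$ visible to exploit the elementary bound \eqref{eq:propofbT2}, and then applies Fefferman--Stein. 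Running the argument concretely also lets the paper control the exact norm $\|b\|_{\BMO_{x_1}(\dot L^{r_2}_{x_2})}$ (with the dot) via a John--Nirenberg step, something your reduction handles automatically since the dot is already absorbed in the bound $\|\mathcal B(x_1)-\mathcal B(y_1)\|_{\mathcal L(L^{p_2},L^{q_2})}\lesssim\|b(x_1,\cdot)-b(y_1,\cdot)\|_{\dot L^{r_2}_{x_2}}$. One small terminological point: the abstract commutator estimate you invoke is \emph{diagonal} in the outer exponent ($p_1\to p_1$); the off-diagonal exponent gap in $x_2$ lives entirely inside the operator-valued symbol.
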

\begin{proof}
The first estimate, where $p_1 = q_1$ and $p_2 > q_2$, requires some vector-valued theory of commutators.
We notice that
\begin{equation}\label{eq:modvec}
([T_1, [b, T_2]]f)^{\sharp}\lesssim_{\varepsilon} \|b\|_{\BMO_{x_1}( \dot L^{r_2}_{x_2} )}\big( M_{1+\varepsilon}\|T_1f\|_{L^{p_2}_{x_2}} + M_{1+\varepsilon}\|f\|_{L^{p_2}_{x_2}} \big),
\end{equation}
where $M$ is the maximal function, $M_s g = (M|g|^s)^{1/s}$ and the sharp maximal function is defined using the $L^{q_2}_{x_2}$ norm as follows
\[
g^{\sharp}(x_1) := \sup_{I_1} \fint_{I_1} \|g -\langle g \rangle_{I_1}\|_{L^{q_2}_{x_2}}.
\]
Here the supremum is over the cubes $I_1 \subset \R^{d_1}$ centered at $x_1$.
The proof of this is essentially a vector-valued version of a known pointwise bound for the sharp maximal function of $[b,T_1]$, but we need to keep 
the operator $T_2$ around to exploit the elementary estimate
\begin{equation}\label{eq:propofbT2}
\|[b, T_2]f\|_{L^{q_2}_{x_2}}\lesssim \|b\|_{\dot L^{r_2}_{x_2}}\|f\|_{L^{p_2}_{x_2}}.
\end{equation}
We give the full details of \eqref{eq:modvec} for the convenience of the reader.
Below we repeatedly use the well-known vector-valued boundedness of $T_1$ and \eqref{eq:propofbT2} without explicit mention.

Fix a cube $I_1$ containing the implicit variable. Let $f_1= f 1_{I_1^*}$ and $f_2=f-f_1$, where $I_1^*=5\sqrt{d_1} I_1$. Let
\[
c= \bla T_1([b-a, T_2]f_2)\bra_{I_1, 1}, \qquad a=\langle b\rangle_{I_1^*, 1}.
\] We can now write 
\[
[T_1, [b, T_2]]f= [T_1, [b-a, T_2]]f
= 
-[b-a, T_2]T_1 f+ T_1 ([b-a, T_2]f_1)+  T_1 ([b-a, T_2]f_2).
\]Using this we split
\begin{align*}
\fint_{I_1} \| [T_1, [b, T_2]]f-c \|_{L^{q_2}_{x_2}}&\le \fint_{I_1} \| [b-a, T_2]T_1 f \|_{L^{q_2}_{x_2}}+\fint_{I_1} \| T_1 ([b-a, T_2]f_1) \|_{L^{q_2}_{x_2}}\\
&+ \fint_{I_1} \| T_1 ([b-a, T_2]f_2)-c\|_{L^{q_2}_{x_2}}\\
&=: L_1+L_2+L_3.
\end{align*}

We have
\[
L_1\lesssim\Big(\fint_{I_1^*} \|b-a\|_{ \dot L^{r_2}_{x_2}}^{(1+\varepsilon)'}\Big)^{\frac 1{(1+\eps)'}}\Big(\fint_{I_1} \|T_1f\|_{  L^{p_2}_{x_2}}^{1+\varepsilon}\Big)^{\frac 1{1+\eps}}\lesssim \|b\|_{\BMO_{x_1}(  \dot L^{r_2}_{x_2})}  M_{1+\varepsilon}(\|T_1f\|_{L^{p_2}_{x_2}}),
\]where in the last inequality we have used John-Nirenberg inequality of $\BMO_{x_1}(  \dot L^{r_2}_{x_2})$, which has the same proof
as the usual vector-valued John-Nirenberg inequality (the dot in $\dot L^{r_2}_{x_2}$ does not matter).

We move to $L_2$. This time we have
\begin{align*}
L_2& \lesssim |I_1|^{-\frac 1{1+\frac \eps 2}}\| [b-a, T_2]f_1\|_{L^{1+\frac \eps 2}_{x_1}(L^{q_2}_{x_2})}\\
&\lesssim \Big(\fint_{I_1^*} \|b-a\|_{ \dot L^{r_2}_{x_2}}^{1+\frac \eps 2} \|f\|_{L^{p_2}_{x_2}}^{1+\frac \eps 2}\Big)^{\frac 1{1+\frac \eps 2}}\lesssim \|b\|_{\BMO_{x_1}(\dot L^{r_2}_{x_2})}  M_{1+\varepsilon}\|f\|_{L^{p_2}_{x_2}}.
\end{align*}
We used John-Nirenberg again in the last step.

It remains to estimate $L_3$. Since $I_1 \cap \supp f_2=\emptyset$, using the kernel representation of $T_1$ and the regularity of the kernel we have
\begin{align*}
L_3&\lesssim \sum_{j=0}^\infty 2^{-j\alpha_1}\fint_{2^{j+1}I_1^*} \|[b-a,T_2]f\|_{L^{q_2}_{x_2}}\lesssim \sum_{j=0}^\infty 2^{-j\alpha_1}\fint_{2^{j+1}I_1^*} \|b-a\|_{ \dot L^{r_2}_{x_2}} \|f\|_{L^{p_2}_{x_2}}\\&
\le \sum_{j=0}^\infty 2^{-j\alpha_1}\fint_{2^{j+1}I_1^*} \|b-\langle b\rangle_{2^{j+1}I_1^*, 1}\|_{ \dot L^{r_2}_{x_2}} \|f\|_{L^{p_2}_{x_2}} \\ &+ \sum_{j=0}^\infty 2^{-j\alpha_1}\fint_{2^{j+1}I_1^*} \|\langle b\rangle_{I_1^*,1}-\langle b\rangle_{2^{j+1}I_1^*, 1}\|_{\dot L^{r_2}_{L^{p_2}_{x_2}}} \|f\|_{L^{p_2}_{x_2}}.
\end{align*}
Using John-Nirenberg again, the first sum can be bounded by $\|b\|_{\BMO_{x_1}( \dot L^{r_2}_{x_2})}  M_{1+\varepsilon}\|f\|_{L^{p_2}_{x_2}}$. Moreover, since 
\[
\|\langle b\rangle_{I_1^*,1}-\langle b\rangle_{2^{j+1}I_1^*, 1}\|_{\dot L^{r_2}_{x_2}}\le \sum_{i=0}^{j} \|\langle b\rangle_{2^{i+1}I_1^*,1}-\langle b\rangle_{2^{i}I_1^*,1}\|_{ \dot L^{r_2}_{x_2}}\lesssim j \|b\|_{\BMO_{x_1}(\dot L^{r_2}_{x_2})},
\]the desired estimate follows for the second term as well. The inequality \eqref{eq:modvec} follows.

Next, if $\varepsilon$ is so small that $p_1/(1+\varepsilon) > 1$, we have by the standard Fefferman-Stein inequality that
\begin{align*}
\|[T_1, [b, T_2]]f\|_{L^{p_1}_{x_1}L^{q_2}_{x_2}}^{p_1}
&= \int_{\R^{d_1}} \|[T_1, [b, T_2]]f\|_{L^{q_2}_{x_2}}^{p_1} \\
&\lesssim \int_{\R^{d_1}} \big[ ([T_1, [b, T_2]]f)^{\sharp} \big]^{p_1} \\
&\lesssim \|b\|_{\BMO_{x_1}( \dot L^{r_2}_{x_2} )}^{p_1} \big( \|M_{1+\varepsilon}\|T_1f\|_{L^{p_2}_{x_2}}\|_{L^{p_1}_{x_1}}^{p_1}+\|M_{1+\varepsilon}\|f\|_{L^{p_2}_{x_2}} \|_{L^{p_1}_{x_1}}^{p_1} \big)\\
&\lesssim \|b\|_{\BMO_{x_1}( \dot L^{r_2}_{x_2} )}^{p_1} \|f\|_{L^{p_1}_{x_1} L^{p_2}_{x_2}}^{p_1}.
\end{align*}
We are done with the case $p_1 = q_1$ and $p_2 > q_2$. See Section \ref{sec:vecval} for more about vector-valued estimates of commutators.

The case $p_1 > q_1$ and $p_2 = q_2$ is the following elementary deduction. Estimate
\begin{align*}
\| [T_1, [b, T_2]] f\|_{L^{q_1}_{x_1}L^{p_2}_{x_2}} &= \| [T_1, [b-c_2, T_2]] f\|_{L^{q_1}_{x_1}L^{p_2}_{x_2}} \\
&\le \| T_1 [b-c_2, T_2] f\|_{L^{q_1}_{x_1}L^{p_2}_{x_2}}+ \|  [b-c_2, T_2] T_1 f\|_{L^{q_1}_{x_1}L^{p_2}_{x_2}},
\end{align*}
use the mixed norm estimates of $T_1$ and 
\[
\| [b-c_2, T_2]f\|_{L^{q_1}_{x_1}L^{p_2}_{x_2}}\lesssim \left\|\|b-c_2\|_{\BMO_{x_2}}\|f\|_{L^{p_2}_{x_2}}\right\|_{L^{q_1}_{x_1}}\le \|b-c_2\|_{L^{r_1}_{x_1}(\BMO_{x_2})} \|f\|_{L^{p_1}_{x_1}L^{p_2}_{x_2}}.
\]
To end the proof we take the infimum over suitable $c_2(x) = c_2(x_2)$.
\end{proof}

The oscillatory lower bound related to $\dot L^{r_1} \dot L^{r_2}$ was recorded in Proposition \ref{prop:dotdot}. The upper bound is completely elementary and does not utilize any
cancellation of the bi-commutator.
\begin{prop}
If $p_1>q_1$ and $p_2>q_2$, then
$$
\|[T_1, [b, T_2]]\|_{L^{p_1}_{x_1}L^{p_2}_{x_2}\to L^{q_1}_{x_1}L^{q_2}_{x_2}} \lesssim \|b\|_{\dot L^{r_1}_{x_1}(\dot L^{r_2}_{x_2})}.
$$
\end{prop}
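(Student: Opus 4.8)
The plan is to prove this estimate by a completely elementary argument that uses no cancellation of the bi-commutator beyond the trivial observation that $[T_1,[b,T_2]]$ annihilates every function of the form $c_1(x_1)+c_2(x_2)$.

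First I would record, for $f,g\in L_c^\infty(\R^d)$, the algebraic expansion
\[
[T_1,[b,T_2]]f = T_1(b\cdot T_2f)+T_2(b\cdot T_1f)-b\cdot T_1T_2f-T_1T_2(bf),
\]
which is valid exactly as in the proofs of Propositions \ref{prop:pointwise} and \ref{prop:pointwiseV2} (the dominated convergence and Fubini bookkeeping is the same, and uses $b\in L^s_{\loc}$, which is assumed). Since multiplication by $c_1=c_1(x_1)$ commutes with $T_2$, and multiplication by $c_2=c_2(x_2)$ commutes with $T_1$, we get $[T_1,[c_1,T_2]]=[T_1,[c_2,T_2]]=0$, so for any such $c_1,c_2$ and $\beta:=b-c_1-c_2$,
\[
[T_1,[b,T_2]]f = T_1(\beta\cdot T_2f)+T_2(\beta\cdot T_1f)-\beta\cdot T_1T_2f-T_1T_2(\beta f).
\]

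Next I would estimate the $L^{q_1}_{x_1}L^{q_2}_{x_2}$-norm of each of these four terms by $\|\beta\|_{L^{r_1}_{x_1}(L^{r_2}_{x_2})}\,\|f\|_{L^{p_1}_{x_1}L^{p_2}_{x_2}}$, using only two facts. The first is that $T_1$ and $T_2$ are bounded on $L^{u_1}_{x_1}L^{u_2}_{x_2}$ for all $u_1,u_2\in(1,\infty)$: for $T_2$ this is Fubini together with the scalar bound on $L^{u_2}(\R^{d_2})$, and for $T_1$ it is the $L^{u_2}(\R^{d_2})$-valued extension of the scalar CZO $T_1$, which holds since $L^{u_2}$ is a UMD space (equivalently, since CZOs admit Banach-lattice-valued extensions). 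The second is the two-step Hölder inequality
\[
\|\beta h\|_{L^{q_1}_{x_1}L^{q_2}_{x_2}}\le\|\beta\|_{L^{r_1}_{x_1}(L^{r_2}_{x_2})}\,\|h\|_{L^{p_1}_{x_1}L^{p_2}_{x_2}},
\]
valid because $\tfrac1{q_i}=\tfrac1{p_i}+\tfrac1{r_i}$ (recall $\tfrac1{r_i}=\tfrac1{q_i}-\tfrac1{p_i}$, and $p_i>q_i>1$ forces $r_i\in(1,\infty)$). Concretely, for the first term one applies $T_1\colon L^{q_1}_{x_1}L^{q_2}_{x_2}\to L^{q_1}_{x_1}L^{q_2}_{x_2}$ and then Hölder followed by $T_2\colon L^{p_1}_{x_1}L^{p_2}_{x_2}\to L^{p_1}_{x_1}L^{p_2}_{x_2}$; the second term is symmetric; the third is immediate from Hölder and the $L^{p_1}_{x_1}L^{p_2}_{x_2}$-boundedness of $T_1T_2$; and the fourth follows from the $L^{q_1}_{x_1}L^{q_2}_{x_2}$-boundedness of $T_1T_2$ and then Hölder. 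Summing the four estimates gives $\|[T_1,[b,T_2]]f\|_{L^{q_1}_{x_1}L^{q_2}_{x_2}}\lesssim\|\beta\|_{L^{r_1}_{x_1}(L^{r_2}_{x_2})}\|f\|_{L^{p_1}_{x_1}L^{p_2}_{x_2}}$ for all $f\in L_c^\infty$, hence for the operator norm.

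Finally I would optimise over $c_1,c_2$: since the bi-commutator is unchanged by $b\mapsto b-c_1(x_1)-c_2(x_2)$, the estimate holds with $\|\beta\|$ replaced by $\inf_{c_1,c_2}\|b-c_1-c_2\|_{L^{r_1}_{x_1}(L^{r_2}_{x_2})}$, and this infimum equals $\|b\|_{\dot L^{r_1}_{x_1}(\dot L^{r_2}_{x_2})}$ (the only nontrivial inequality is ``$\le$'', obtained by minimising $d\mapsto\|b(x_1,\cdot)-c_2-d\|_{L^{r_2}_{x_2}}$ pointwise in $x_1$ and choosing the minimiser measurably in $x_1$). I expect no real obstacle here; the single step that deserves a sentence of justification is the vector-valued boundedness of the scalar CZO $T_1$, which is classical.
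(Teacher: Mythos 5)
Your proof is correct and matches the paper's approach, which simply says ``use H\"older's inequality and the mixed-norm estimates of $T_1$ and $T_2$.'' You have filled in the details that the paper leaves implicit: the four-term expansion, the replacement of $b$ by $\beta = b - c_1(x_1) - c_2(x_2)$ exploiting $[T_1,[c_1,T_2]]=[T_1,[c_2,T_2]]=0$, and the measurable-selection argument identifying $\inf_{c_1,c_2}\|b-c_1-c_2\|_{L^{r_1}_{x_1}L^{r_2}_{x_2}}$ with $\|b\|_{\dot L^{r_1}_{x_1}(\dot L^{r_2}_{x_2})}$ (the paper's analogous Proposition for $p_1>q_1$, $p_2<q_2$ handles the last step by simply ``taking the infimum over suitable $c_2$'', so your extra care about choosing the pointwise minimiser measurably is a welcome, if routine, clarification).
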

\begin{proof}
Simply use H\"older's inequality and the mixed-norm estimates of $T_1$ and $T_2$.
\end{proof}

\section{Vector-valued estimates for commutators}\label{sec:vecval}
In the proof of Proposition \ref{prop:usesVecVal} we used some vector-valued theory of commutators. Although the proof of Proposition \ref{prop:usesVecVal} was completely
self-contained, in this section we more systematically
prove, and partly recall, such vector-valued estimates. This topic is of independent interest.

\subsection{Vector-valued estimates for $[b,T]$}

In the one-parameter setting $[b,T]$, where $T$ is a CZO in $\R^d$, the vector-valued estimates are known and quite easy, but perhaps not so well-known in the community.
They are simple as there is a certain pointwise estimate for the sharp maximal function of $[b,T]$, which has a straightforward vector-valued extension. A reference
is \cite{ST}. This idea was already used in the proof of Proposition \ref{prop:usesVecVal}. We now recall how this can be done in a general situation.

First, a few definitions. For an extensive treatment of Banach space theory see the books \cite{HNVW1, HNVW2} by Hyt\"onen, van Neerven, Veraar and Weis.

Given a Banach space $X$ with norm $|\cdot|_X$ we
let $L^p(\R^d;X)$, $p \in (0, \infty]$, be the Bochner space of $X$-valued functions with
$\int_{\R^d} |f(x)|_X^p \ud x < \infty$. 
For $f:\R^d \to X$, we also define
\[
f^\sharp(x) := \sup_{Q=Q(x,r), r>0} \frac 1{|Q|}\int_Q |f-\langle f\rangle_Q |_X,
\]
where the supremum is over the cubes $Q(x,r)$ centred at $x$.
We have
\[
f^\sharp(x)\sim \sup_{Q=Q(x,r), r>0} \inf_{e\in X} \frac 1{|Q|}\int_Q |f-e |_X.
\]

The $\UMD$ property is a necessary and sufficient condition for the boundedness of various one-parameter singular integrals on $L^p(\R^d;X) = L^p(X)$, see \cite[Sec. 5.2.c and the Notes to Sec. 5.2]{HNVW1}. 
We essentially only need to blackbox this fact, but below is the definition.
\begin{defn}
A Banach space $X$ is said to be a $\UMD$ \emph{space}, where $\UMD$ stands for unconditional martingale differences, if for all $p \in (1,\infty)$, all
$X$-valued $L^p$-martingale difference sequences $(d_j)_{j=1}^k$ and all choices of signs $\varepsilon_j \in \{-1,1\}$
we have
\begin{equation}\label{eq:UMDDef}
\Big\| \sum_{j=1}^k \varepsilon_j d_j \Big \|_{L^p(X)}
\lesssim \Big\| \sum_{j=1}^k d_j \Big \|_{L^p(X)}.
\end{equation}
The $L^p(X)$-norm is with respect to the measure space where the martingale differences are defined.
\end{defn}
A standard property of $\UMD$ spaces is that if \eqref{eq:UMDDef} holds for one $p_0\in (1,\infty)$ it
holds for all $p \in (1, \infty)$ \cite[Theorem 4.2.7]{HNVW1}. Moreover, if $X$ is $\UMD$ then so is the dual space $X^*$ \cite[Prop. 4.2.17]{HNVW1}.

Suppose now that $X_1$ is a Banach space and $ X_2, X_3$ are $\UMD$ spaces such that there exists a bilinear map $X_1\times X_2\to X_3$, which we denote by
\[
(x_1,x_2) \to x_1x_2,
\]
so that $$|x_1x_2|_{X_3}\le |x_1|_{X_1}|x_2|_{X_2}.$$ Let $f$ take values in $X_2$ and $b$ take values in $X_1$. For a CZO $T$ on $\R^d$ a standard estimate
as in the proof of Proposition \ref{prop:usesVecVal}
gives that for all $\varepsilon > 0$ we have
\begin{equation}\label{eq:sharp}
\big([b, T]f\big)^\sharp\lesssim_{\varepsilon} \|b\|_{\BMO(X_1)}\big( M_{1+\varepsilon}(|Tf|_{X_2})+ M_{1+\varepsilon}|f|_{X_2} \big),
\end{equation}
where $M$ is the maximal function and $M_s g = (M|g|^s)^{1/s}$. The sharp maximal function is defined using the norm of $X_3$.
To get this estimate, already the boundedness $T \colon L^q(X_3) \to L^q(X_3)$, $q \in (1,\infty)$, is required
-- this is true as $X_3$ is $\UMD$.
The vector-valued estimate
\begin{equation}\label{eq:vecval1par}
\|[b, T]f\|_{L^p(X_3)} \lesssim \|b\|_{\BMO(X_1)} \|f\|_{L^p(X_2)}, \qquad p \in (1,\infty),
\end{equation}
follows from \eqref{eq:sharp} using the Fefferman-Stein inequality as in the proof of Proposition \ref{prop:usesVecVal}.
Here the $\UMD$ property of $X_2$ is needed via the boundedness $T \colon L^p(X_2) \to L^p(X_2)$.

Notice that the proof of Proposition \ref{prop:usesVecVal} was almost just this vector-valued estimate -- we just needed to work
a bit more to get the norm $\|b\|_{\BMO_{x_1}(\dot L^{r_2}_{x_2})}$ instead of $\|b\|_{\BMO_{x_1}(L^{r_2}_{x_2})}$.

Alternatively, it is also possible to directly prove a vector-valued version of the sparse domination of commutators \cite{LOR1}:
\begin{equation*}\label{eq:sparsecom}
\begin{split}
|\langle [b,T]f, g \rangle| &\lesssim \sum_{Q \in \calS} \bla |b-\langle b \rangle_Q|_{X_1}  |f|_{X_2}  \bra_{Q} \langle |g|_{X_3^*} \rangle_Q |Q| \\
&+ \sum_{Q \in \calS}\langle |f|_{X_2} \rangle_Q  \bla |b-\langle b \rangle_Q|_{X_1}  |g|_{X_3^*} \bra_{Q}  |Q|.
\end{split}
\end{equation*}

\subsection{Vector-valued estimates for bi-commutators}
We will prove the bi-parameter analogue of the vector-valued estimate \eqref{eq:vecval1par}. There is no equally cheap way for this as in the above
one-parameter case. However, we want to show that it can still be done with different methods.

We need more definitions
and tools.

\subsubsection*{Dyadic notation}\label{sec:DyadicNotation}
Let $\calD$ be some fixed dyadic lattice in $\R^d$.
For a fixed $Q \in \calD$ and $f \in L^1_{\loc}(X)$ we define as follows.
\begin{itemize}
\item If $k \in \Z$, $k \ge 0$, then $Q^{(k)}$ denotes the unique cube $S \in \calD$ for which $Q \subset S$ and
$\ell(Q) = 2^{-k}\ell(S)$.
\item The dyadic children of $Q$ are denoted by $\ch (Q) = \{Q' \in \calD\colon (Q')^{(1)} = Q\}$.
\item The average operator is $E_Q f=\langle f \rangle_Q 1_Q$.
\item The martingale difference $\Delta_Q f$ is defined by $\Delta_Q f= \sum_{Q' \in \ch (Q)} E_{Q'} f - E_{Q} f$.
\item For $k \in \Z$, $k \ge 0$, we define the martingale difference block
$$
\Delta_{Q,k} f=\sum_{\substack{S \in \calD \\ S^{(k)}=Q}} \Delta_{S} f.
$$
\end{itemize}

Haar functions are used to decompose martingale differences $\Delta_Q f$ in terms of rank-one operators.
For an interval $I \subset \R$, we denote by $I_{l}$ and $I_{r}$ the left and right
halves of the interval $I$, respectively. We define $h_{I}^0 = |I|^{-1/2}1_{I}$ and $h_{I}^1 = |I|^{-1/2}(1_{I_{l}} - 1_{I_{r}})$.
Let now $Q = I_1 \times \cdots \times I_d \in \calD$, and define the Haar function $h_Q^{\eta}$, $\eta = (\eta_1, \ldots, \eta_d) \in \{0,1\}^d$, via
\begin{displaymath}
h_Q^{\eta} = h_{I_1}^{\eta_1} \otimes \cdots \otimes h_{I_d}^{\eta_d}.
\end{displaymath}
If $\eta \ne 0$, the Haar function is cancellative: $\int h_Q^{\eta} = 0$. We may write $\Delta_Q f = \sum_{\eta \ne 0} \langle f, h_{Q}^{\eta}\rangle h_{Q}^{\eta}$,
where $\langle f, h_Q^{\eta} \rangle = \int f h_Q^{\eta}$.
We exploit notation by suppressing the presence of $\eta$, and simply write $h_Q$ for some $h_Q^{\eta}$, $\eta \ne 0$.
Similarly, we write $\Delta_Q f = \langle f, h_Q \rangle h_Q$.

When performing bi-parameter analysis and thinking of $\R^d$ as a product space $\R^{d_1} \times \R^{d_2}$, we denote
a dyadic grid in $\R^{d_i}$ by $\calD^i$ and denote the related dyadic rectangles in $\R^d$ by $\calD = \calD^1 \times \calD^2$.
If $R = I_1 \times I_2 \in \calD$ we set $h_R = h_{I_1} \otimes h_{I_2}$. We also define $\Delta_{I_1}^1 f(x) := \Delta_{I_1} (f(\cdot, x_2))(x_1)$ and
define $\Delta_{I_2}^2f$ analogously. Then we set $\Delta_R f = \Delta_{I_1}^1 \Delta_{I_2}^2 f = \Delta_{I_2}^2 \Delta_{I_1}^1 f$.
We record that $\Delta^1_{I_1} f = h_{I_1} \otimes \langle f , h_{I_1} \rangle_1$, $\Delta^2_{I_2} f = \langle f, h_{I_2} \rangle_2 \otimes h_{I_2}$ and
$ \Delta_R f = \langle f, h_R\rangle h_R$.
Bi-parameter martingale blocks may also be defined in the natural way
$$
\Delta_R^{j_1, j_2} f  =  \sum_{J_1 \colon J_1^{(j_1)} = I_1} \sum_{J_2 \colon J_2^{(j_2)} = I_2} \Delta_{J_1 \times J_2} f.
$$

\subsubsection*{More about Banach spaces}

In addition to $\UMD$ we will need the ``property $(\alpha)$''. Bi-parameter extensions require this -- see \cite[Sec. 8.3.e]{HNVW2}.
Suppose that for all $N$, all scalars $a_{i,j}$ and all
$e_{i,j} \in X$, $1 \le i, j \le N$, there holds
$$
\Big(\E \E' \Big| \sum_{1 \le i, j \le N} \varepsilon_i \varepsilon_j' a_{i,j} e_{i,j} \Big|_X^2\Big)^{1/2}
\lesssim \max_{i,j} |a_{i,j}| \Big(\E \E' \Big| \sum_{1 \le i, j \le N} \varepsilon_i \varepsilon_j' e_{i,j} \Big|_X^2 \Big)^{1/2}.
$$
If this holds, the Banach space $X$ is said to satisfy the property $(\alpha)$ of Pisier.
Here $\{\varepsilon_k\}_k$ is a \emph{collection of independent random signs} -- that is, the following holds.
We have $\varepsilon_k \colon \calM \to \{-1,1\}$, where $(\calM, \rho)$ is a probability space, the collection
$\{\varepsilon_k\}_k$ is independent and
$\rho(\{\varepsilon_k=1\})=\rho(\{\varepsilon_k=-1\})=1/2$.

Instead of dealing with $\UMD$ spaces with property $(\alpha)$ we will be
working with the formally stronger (although for concrete examples essentially the same) assumption that $X$ is a $\UMD$ function lattice. For proofs and background on function lattices see
\cite{ALV, HMV, LT, Lo}.
A normed space $X$ is a Banach function lattice if the following four conditions hold.
Let $(\Omega, \mathcal{A}, \mu)$ be a $\sigma$-finite measure space.
\begin{enumerate}
\item Every $e \in X$ is a measurable function $e \colon \Omega \to \R$ (an equivalence class).
\item If $e \colon \Omega \to \R$ is measurable, $u \in X$ and $|e(\omega)| \le |u(\omega)|$ for $\mu$-a.e. $\omega \in \Omega$, then
$e \in X$ and $|e|_X \le |u|_X$.
\item There is an element $e \in X$ so that $e > 0$ (i.e. $e(\omega) > 0$ for $\mu$-a.e. $\omega \in \Omega$).
\item If $e_i ,e$ are non-negative, $e_i \in X$, $e_i \le e_{i+1}$, $e_i(\omega) \to e(\omega)$ for $\mu$-a.e. $\omega \in \Omega$ and $\sup_i |e_i|_X < \infty$,
we have $e \in X$ and $|e_i|_X \to |e|_X = \sup_i |e_i|_X$.
\end{enumerate}
Such a space $X$ is automatically a Banach space.
For a measurable $u \colon \Omega \to \R$ with $eu \in L^1(\mu)$ for all $e \in X$, we define
$$
\Lambda_u \colon X \to \R, e_u(e) = \int_{\Omega} e(\omega)u(\omega) \ud \mu(\omega).
$$
In this case $\Lambda_u \in X^*$ (the dual space of $X$). We set $X' \subset X^*$ to consist of those elements of $X^*$ that have the form $\Lambda_u$ for some $u$ like above and make the obvious identification.
The Banach function lattice $X'$ is called the K\"othe dual of $X$.

We mostly work with $\UMD$ Banach function lattices, where $X' = X^*$ is automatic.
Indeed, $\UMD$ spaces are always reflexive,
and in reflexive Banach lattices $X' = X^*$ holds. When $X' = X^*$ is not automatic, we prefer to tacitly assume it for clarity.
A $\UMD$ function lattice $X$ also automatically satisfies the property $(\alpha)$ of Pisier -- this explains why this assumption is not explicitly needed.

%The following Kahane-Khintchine inequality is fundamental to deal with the random sums.
%If $X$ is a Banach space we have for all $x_1,\dots, x_M \in X$ and $p,q \in (0,\infty)$ that
%\begin{equation*}\label{eq:KK}
%\Big(\E \Big | \sum_{m=1}^M \varepsilon_m x_m \Big |_X^p \Big)^{1/p}
%\sim \Big(\E \Big | \sum_{m=1}^M \varepsilon_m x_m \Big |_X^q \Big)^{1/q}.
%\end{equation*}

Next, we explain some maximal function estimates by Bourgain \cite{Bo} and Rubio de Francia \cite{Ru} that are valid in $\UMD$ function lattices (see also \cite{GMT}).
We also explain some standard square function estimates, see e.g. the recent paper \cite{HMV}.

Let $X$ be a $\UMD$ function lattice, $\R^d = \R^{d_1} \times \R^{d_2}$ and $\calD = \calD^{1} \times \calD^{2}$, where $\calD^{i}$ is a dyadic grid in $\R^{d_i}$.
Given $f \colon \R^{d} \to X$ and $g \colon \R^{d_1} \to X$ we define the dyadic (lattice) maximal functions
$$
M_{\calD^1}g(x_1, \omega) := \sup_{I \in \calD^1} \frac{1_I(x_1)}{|I|}\int_I |g(x_1, \omega)|\ud x_1 \textup{ and }
M_{\calD} f(x, \omega) := \sup_{R \in \calD}  \frac{1_R(x)}{|R|}\int_R |f(x, \omega)|\ud x.
$$
We also set $M^1_{\calD^1} f(x_1, x_2) =  M_{\calD^1}(f(\cdot, x_2))(x_1)$. The operator $M^2_{\calD^2}$ is defined similarly.

Similarly, define the square functions
$$
S_{\calD} f = \Big( \sum_{R \in \calD}  |\Delta_{R} f|^2 \Big)^{1/2}, \,\, S_{\calD^1}^1 f =  \Big( \sum_{I \in \calD^n}  |\Delta_I^1 f|^2 \Big)^{1/2}\, \textup{ and } \,
S_{\calD^2}^2 f =  \Big( \sum_{J \in \calD^m} |\Delta_J^2 f|^2 \Big)^{1/2}.
$$
Notice that $|x| \in X$ and $|x|^2 \in X$ are defined in the natural pointwise sense, as $X$ is a function lattice.
Define also
$$
S_{\calD^1, M_{\calD^2}} f = \Big( \sum_{I \in \calD^1} \frac{1_I}{|I|} \otimes [M_{\calD^2} \langle f, h_I \rangle_1]^2 \Big)^{1/2}
$$
and define $S_{\calD^2, M_{\calD^1}} f$ analogously.
We are ready to state the following standard estimates that are at least recorded in \cite{HMV}.
\begin{lem}\label{lem:standardEst}
Let $\calD = \calD^{1} \times \calD^{2}$, where $\calD^{i}$ is a dyadic grid in $\R^{d_i}$, and let $X$ be a $\UMD$ function lattice.
Then we have for all $p_1, p_2 \in (1, \infty)$ that
$$
\| f \|_{L^{p_1}_{x_1} L^{p_2}_{x_2}(X)}
 \sim \| S_{\calD} f\|_{L^{p_1}_{x_1} L^{p_2}_{x_2}(X)}
\sim  \| S_{\calD^1}^1 f  \|_{L^{p_1}_{x_1} L^{p_2}_{x_2}(X)}
\sim \| S_{\calD^2}^2 f  \|_{L^{p_1}_{x_1} L^{p_2}_{x_2}(X)}.
$$
Moreover, for $s \in (1,\infty)$ we have the Fefferman--Stein inequality
$$
\Big\| \Big( \sum_j |M f_j |^s \Big)^{1/s} \Big\|_{L^{p_1}_{x_1} L^{p_2}_{x_2}(X)} \lesssim \Big\| \Big( \sum_{j} | f_j |^s \Big)^{1/s} \Big\|_{L^{p_1}_{x_1} L^{p_2}_{x_2}(X)}.
$$
Here $M$ can e.g. be $M_{\calD^1}^1$ or $M_{\calD}$. Finally, we have
$$
\| S_{\calD^1, M_{\calD^2}} f\|_{L^{p_1}_{x_1} L^{p_2}_{x_2}(X)} + \| S_{\calD^2, M_{\calD^1}}f\|_{L^{p_1}_{x_1} L^{p_2}_{x_2}(X)}\lesssim \|f\|_{L^{p_1}_{x_1} L^{p_2}_{x_2}(X)}.
$$
\end{lem}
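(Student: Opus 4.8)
The plan is to reduce every claim to a one-parameter statement in a UMD Banach function lattice and then iterate, exploiting that the class of UMD function lattices is stable under the operations $Y\mapsto L^{p}(\R^{n};Y)$, $Y\mapsto \ell^{2}(Y)$ and $Y\mapsto\ell^{s}(Y)$ for $1<p,s<\infty$. Concretely, I would first isolate two one-parameter black boxes, valid for an arbitrary UMD function lattice $Y$, any dyadic grid $\calD'$ in some $\R^{n}$, and any $p\in(1,\infty)$, with constants depending only on $n$, $p$ and the UMD constant of $Y$: (i) the Rubio de Francia lattice square function equivalence $\|g\|_{L^{p}(Y)}\sim\|(\sum_{Q\in\calD'}|\Delta_{Q}g|^{2})^{1/2}\|_{L^{p}(Y)}$ (see \cite{HMV, ALV, LT, Lo}), and (ii) the Bourgain--Rubio de Francia $\ell^{s}$-valued Fefferman--Stein inequality $\|(\sum_{j}(M_{\calD'}g_{j})^{s})^{1/s}\|_{L^{p}(Y)}\lesssim\|(\sum_{j}|g_{j}|^{s})^{1/s}\|_{L^{p}(Y)}$ (see \cite{Bo, Ru, GMT}). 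Both will be invoked with $Y$ taken to be $L^{p_{2}}_{x_{2}}(X)$, $\ell^{2}(X)$, $\ell^{s}(X)$, or iterations of these, which are again UMD function lattices since $X$ is.

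For the square function equivalences I would argue as follows. Applying (i) in the variable $x_{1}$ to $f$ regarded as an $L^{p_{2}}_{x_{2}}(X)$-valued function and using the identity $\Delta^{1}_{I_{1}}f=h_{I_{1}}\otimes\langle f,h_{I_{1}}\rangle_{1}$ together with the function-lattice structure of $L^{p_{2}}_{x_{2}}(X)$, one sees that the $L^{p_{2}}_{x_{2}}(X)$-valued lattice square function of $f$ is exactly $S_{\calD^{1}}^{1}f$, giving $\|f\|_{L^{p_{1}}_{x_{1}}L^{p_{2}}_{x_{2}}(X)}\sim\|S^{1}_{\calD^{1}}f\|_{L^{p_{1}}_{x_{1}}L^{p_{2}}_{x_{2}}(X)}$. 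Applying (i) instead in $x_{2}$ to $f(x_{1},\cdot)\colon\R^{d_{2}}\to X$ for a.e.\ fixed $x_{1}$, with the constant uniform in $x_{1}$, and then raising to the power $p_{1}$ and integrating in $x_{1}$, yields $\|f\|_{L^{p_{1}}_{x_{1}}L^{p_{2}}_{x_{2}}(X)}\sim\|S^{2}_{\calD^{2}}f\|_{L^{p_{1}}_{x_{1}}L^{p_{2}}_{x_{2}}(X)}$. For the full $S_{\calD}$ I would combine the two via $\Delta_{R}f=\Delta^{2}_{I_{2}}\Delta^{1}_{I_{1}}f$: identify $\|S^{1}_{\calD^{1}}f\|$ with the norm of the $\ell^{2}(X)$-valued function $(\Delta^{1}_{I_{1}}f)_{I_{1}}$, apply the $x_{2}$-equivalence coordinatewise in the UMD lattice $\ell^{2}(X)$, and collapse. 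The Fefferman--Stein statement for $M=M^{1}_{\calD^{1}}$ is just (ii) in $x_{1}$ with target $L^{p_{2}}_{x_{2}}(X)$; for $M=M_{\calD}$ I would use the pointwise (and coordinatewise) bound $M_{\calD}f\le M^{1}_{\calD^{1}}(M^{2}_{\calD^{2}}f)$ for dyadic rectangles and apply (ii) twice, first in $x_{2}$ (via Fubini, target $L^{p_{1}}_{x_{1}}(X)$) and then in $x_{1}$.

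Finally, for the mixed square--maximal estimate I would again use $\Delta^{1}_{I_{1}}f=h_{I_{1}}\otimes\langle f,h_{I_{1}}\rangle_{1}$ to write $S^{1}_{\calD^{1}}f=(\sum_{I_{1}}\tfrac{1_{I_{1}}}{|I_{1}|}\otimes|\langle f,h_{I_{1}}\rangle_{1}|^{2})^{1/2}$, so that $S_{\calD^{1},M_{\calD^{2}}}f$ is obtained from this by replacing each $x_{2}$-function $\langle f,h_{I_{1}}\rangle_{1}$ inside the $\ell^{2}_{I_{1}}$-sum by $M_{\calD^{2}}\langle f,h_{I_{1}}\rangle_{1}$. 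Since $M_{\calD^{2}}$ acts only in $x_{2}$ while the weights $1_{I_{1}}/|I_{1}|$ live in $x_{1}$, this is precisely the $\ell^{2}_{I_{1}}$-valued Fefferman--Stein inequality (ii) for $M_{\calD^{2}}$ in the variable $x_{2}$, applied to the sequence $(|I_{1}|^{-1/2}1_{I_{1}}\otimes\langle f,h_{I_{1}}\rangle_{1})_{I_{1}}$; hence $\|S_{\calD^{1},M_{\calD^{2}}}f\|_{L^{p_{1}}_{x_{1}}L^{p_{2}}_{x_{2}}(X)}\lesssim\|S^{1}_{\calD^{1}}f\|_{L^{p_{1}}_{x_{1}}L^{p_{2}}_{x_{2}}(X)}\sim\|f\|_{L^{p_{1}}_{x_{1}}L^{p_{2}}_{x_{2}}(X)}$, and the bound for $S_{\calD^{2},M_{\calD^{1}}}$ is symmetric.

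I do not expect a genuine obstacle here — this is exactly why the lemma is labelled a collection of standard facts and referred to \cite{HMV} — but the plan does require care on three bookkeeping points: that all one-parameter constants depend only on the integrability exponent and the UMD constant of the target lattice, so the Fubini-with-a-parameter steps are legitimate; that the abstractly-defined lattice square functions and lattice maximal functions of the composite targets $L^{p_{2}}_{x_{2}}(X)$, $\ell^{2}(X)$, $\ell^{s}(X)$ are correctly matched with the concretely written operators $S^{i}_{\calD^{i}}$, $S_{\calD}$, $S_{\calD^{1},M_{\calD^{2}}}$; and that the $\ell^{s}$-valued Fefferman--Stein bound is not a pointwise domination by the scalar maximal function but genuinely invokes the vector-valued maximal theorem of Bourgain and Rubio de Francia.
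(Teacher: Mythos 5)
The paper does not actually prove Lemma~\ref{lem:standardEst}; it simply states it as a collection of standard facts and refers to \cite{HMV} for them. Your proposal supplies a correct self-contained argument using exactly the toolbox those references rely on: the one-parameter lattice square-function equivalence and the Bourgain--Rubio de Francia $\ell^s$-valued maximal inequality, iterated through the intermediate $\UMD$ lattices $L^{p_2}_{x_2}(X)$, $\ell^2(X)$, $\ell^s(X)$. The identifications you make — $\Delta^1_{I_1}f = h_{I_1}\otimes\langle f,h_{I_1}\rangle_1$ turning the $L^{p_2}_{x_2}(X)$-valued lattice square function in $x_1$ into $S^1_{\calD^1}$, the $\ell^2$-chaining giving $S_\calD$, the pointwise bound $M_\calD \le M^1_{\calD^1}M^2_{\calD^2}$ for the rectangle maximal function, and the observation that $S_{\calD^1,M_{\calD^2}}$ arises from $S^1_{\calD^1}$ by applying $M_{\calD^2}$ inside the $\ell^2_{I_1}$-sum, which is the $\ell^2$-Fefferman--Stein bound in $x_2$ since the weights $1_{I_1}/|I_1|$ are constants in $x_2$ — are all correct, and the concluding caveats show you are aware of the genuine subtleties (uniformity of constants in the Fubini steps, and that the lattice maximal bound is not a pointwise domination). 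This is the expected proof and matches the route one would extract from \cite{HMV, Bo, Ru, GMT}.
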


\subsubsection*{Product $\BMO$ and vector-valued $H^1-\BMO$ estimates}

Let again $X$ be a Banach function lattice, $\R^d = \R^{d_1} \times \R^{d_2}$ and $\calD = \calD^{1} \times \calD^{2}$, where $\calD^{i}$ is a dyadic grid in $\R^{d_i}$.
A locally integrable function $b \colon \R^d \to X$
belongs to the dyadic $X$-valued product $\BMO$ space $\BMO_{\calD, \textup{prod}}(p, X)$, $0 < p < \infty$, if
$$
\sup_{\Omega}  \frac{1}{|\Omega|^{1/p}}  \Big\| \Big( \sum_{ \substack{ R \in \calD \\ R \subset \Omega }} |\langle b, h_R \rangle|^2 \frac{1_R}{|R|}
\Big)^{1/2}\Big\|_{L^p(X)}   < \infty,
$$
where the supremum is taken over all open subsets $\Omega \subset \R^{d}$ of finite measure. We define $\BMO_{\calD, \textup{prod}}(X) := \BMO_{\calD, \textup{prod}}(2,X)$. With
this definition we will not require a Banach-valued bi-parameter John--Nirenberg inequality for which we do not have a convenient reference.
In the scalar-valued case it is well-known that all these norms are equivalent. The product $\BMO$ is the supremum over all dyadic grids
of the dyadic product $\BMO$ norms.

\begin{rem}
The following explanation gives one way to arrive at the definition of product $\BMO$ in our generality.
If $X$ is $\UMD$ and $p \in (1,\infty)$ then
\begin{align*}
\Big( \int_{J_i} |b - \langle b \rangle_{J_i}|_X^p \Big)^{1/p} = \Big\|  \sum_{ \substack{ I_i \in \calD^i \\ I_i \subset J_i }} \langle b, h_{I_i} \rangle h_{I_i} \Big\|_{L^p(X)} 
\sim \E \Big\| \sum_{ \substack{ I_i \in \calD^i \\ I_i \subset J_i }} \eps_{I_i} \langle b, h_{I_i} \rangle \frac{1_{I_i}}{|I_i|^{1/2}} \Big\|_{L^p(X)}.
\end{align*}
If $X$ is a $\UMD$ function lattice we further have
$$
\E \Big\| \sum_{ \substack{ I_i \in \calD^i \\ I_i \subset J_i }} \eps_{I_i} \langle b, h_{I_i} \rangle \frac{1_{I_i}}{|I_i|^{1/2}} \Big\|_{L^p(X)} \sim
\Big\| \Big( \sum_{ \substack{ I_i \in \calD^i \\ I_i \subset J_i }} |\langle b, h_{I_i} \rangle|^2 \frac{1_{I_i}}{|I_i|} \Big)^{1/2} \Big\|_{L^p(X)}.
$$
The last similarity can be seen as follows. The Kahane--Khintchine inequality (see \cite[Theorem 6.2.4]{HNVW1}) says if $X$ is a Banach space, we have
for all $x_1,\dots, x_M \in X$ and $p,q \in (0,\infty)$ that
\begin{equation*}\label{eq:KK}
\Big(\E \Big | \sum_{m=1}^M \varepsilon_m x_m \Big |_X^p \Big)^{1/p}
\sim \Big(\E \Big | \sum_{m=1}^M \varepsilon_m x_m \Big |_X^q \Big)^{1/q}.
\end{equation*}
Applying this twice we see that
$$
\E \Big\| \sum_{ \substack{ I_i \in \calD^i \\ I_i \subset J_i }} \eps_{I_i} \langle b, h_{I_i} \rangle \frac{1_{I_i}}{|I_i|^{1/2}} \Big\|_{L^p(X)}
\sim \Big\| \Big( \E \Big| \sum_{ \substack{ I_i \in \calD^i \\ I_i \subset J_i }} \eps_{I_i} \langle b, h_{I_i} \rangle \frac{1_{I_i}}{|I_i|^{1/2}} \Big|_X^2\Big)^{1/2} \Big\|_{L^p}.
$$
It remains to use the equivalence
\begin{equation*}\label{eq:bd}
\Big( \E \Big| \sum_j \eps_j e_j \Big|_X^2\Big)^{1/2} \sim \Big|\Big(\sum_j |e_j|^2 \Big)^{1/2}\Big|_X
\end{equation*}
for all $e_j \in X$. This theorem of Khintchine--Maurey holds in all Banach lattices with finite cotype -- in particular, in $\UMD$ function lattices. For a proof see
Theorem 7.2.13 in \cite{HNVW2}.

This explains that in one-parameter the dyadic $\BMO(X)$ norm (if $X$ is $\UMD$) also has the equivalent form
$$
\sup_{J_i \in \calD^i} \Big\| \Big( \sum_{ \substack{ I_i \in \calD^i \\ I_i \subset J_i }} |\langle b, h_{I_i} \rangle|^2 \frac{1_{I_i}}{|I_i|} \Big)^{1/2} \Big\|_{L^p(X)}.
$$
The product $\BMO$ is simply the bi-parameter analogue of this form, where the supremum is taken over all open sets.
The rectangular $\BMO$ is the analogue of this, where the supremum is over dyadic rectangles. This is because
$$
\sum_{ \substack{ P \in \calD \\ P \subset R = I_1 \times I_2 }} \langle b, h_P \rangle h_P = (b-\langle b\rangle_{I_1,1} -\langle b\rangle_{I_2,2} +\langle b\rangle_R)1_R.
$$
\end{rem}

Let $b \colon \R^d \to X$ and $f \colon \R^d \to X'$, where $X$ is a Banach function lattice and $X'$ is the K\"othe dual of $X$.
Denote $\langle b, f \rangle = \int_{\R^d} \{ b(x), f(x)\}_X\ud x$, where
$\{\cdot, \, \cdot\}_X$ denotes the duality pairing between $X$ and $X'$, in other words, we have $\{e, \, e'\}_X = \int_{\Omega} e(\omega)e'(\omega)\ud \mu(\omega)$ for
$e \in X$ and $e' \in X'$.

The following $H^1$-$\BMO$ duality in the lattice case is convenient. The proof is similar to the scalar-valued case.
\begin{lem}\label{lem:H1BMO}
Let $X$ be a Banach function lattice. Then we have
$$
|\langle b, f\rangle| \lesssim \|b\|_{\BMO_{\calD, \textup{prod}}(X)} \| S_{\calD} f \|_{L^1(X')}.
$$
\end{lem}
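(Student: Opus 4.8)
The plan is to reproduce the stopping-time proof of the product $H^1$--$\BMO$ duality of Chang and R.~Fefferman, with the Banach-lattice square-function calculus of Lemma~\ref{lem:standardEst} and the Khintchine--Maurey inequalities recorded above playing the role that Hilbert-space orthogonality plays in the scalar case. \emph{Reduction via Haar expansions.} First I would reduce, by density (approximating $f$ by finite partial Haar sums, which converge to $f$ in $\|S_\calD\,\cdot\,\|_{L^1(X')}$), to the case where $b$ and $f$ have finite, cancellative Haar expansions; note the $\BMO$ seminorm only sees the cancellative part of $b$. Then Parseval gives $\langle b,f\rangle=\sum_{R\in\calD}\{\langle b,h_R\rangle,\langle f,h_R\rangle\}_X$, and since $\{u,v\}_X=\int_\Omega uv\,\ud\mu$ satisfies $|\{u,v\}_X|\le\{|u|,|v|\}_X$ together with a pointwise (on $(\Omega,\mu)$) Cauchy--Schwarz inequality applied rectangle by rectangle, one obtains
\[
|\langle b,f\rangle|\le\sum_{R\in\calD}\int_{\R^d}\{|\Delta_R b(x)|,|\Delta_R f(x)|\}_X\,\ud x=\int_{\R^d}\sum_{R\in\calD}\{|\Delta_R b(x)|,|\Delta_R f(x)|\}_X\,\ud x,
\]
using $|\Delta_R g(x)|=|\langle g,h_R\rangle|\,|h_R(x)|$ and $\int|h_R|^2=1$.

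\emph{Stopping time and the per-level split.} Set $\phi:=|S_\calD f|_{X'}\in L^1(\R^d)$ and, for $k\in\Z$, let $\Omega_k:=\{M_\calD\phi>2^k\}$. I would distribute the rectangles with $\Delta_R f\neq0$ into families $\calR_k$ by letting $k(R)$ be the largest $k$ with $|R\cap\Omega_k|>\tfrac12|R|$, so that every $R\in\calR_k$ lies in the enlargement $\widetilde\Omega_k:=\{M_\calD 1_{\Omega_k}>\tfrac12\}$ while $|R\cap\Omega_{k+1}|\le\tfrac12|R|$. Summing over $\calR_k$ and applying Cauchy--Schwarz in $R$ (pointwise on $(\Omega,\mu)$) and then in $x\in\R^d$, the level-$k$ contribution is at most
\[
\Big\|\big(\textstyle\sum_{R\in\calR_k}|\Delta_R b|^2\big)^{1/2}\Big\|_{L^2(X)}\;\Big\|\big(\textstyle\sum_{R\in\calR_k}|\Delta_R f|^2\big)^{1/2}\Big\|_{L^2(X')}.
\]
Here $\big(\sum_{R\in\calR_k}|\Delta_R b|^2\big)^{1/2}=\big(\sum_{R\in\calR_k}|\langle b,h_R\rangle|^2\tfrac{1_R}{|R|}\big)^{1/2}$, so, after enlarging the index set to all $R\subset\widetilde\Omega_k$, the first factor is exactly the square function occurring in the definition of $\BMO_{\calD,\textup{prod}}(X)$ (with $p=2$, open set $\widetilde\Omega_k$), whence it is $\le\|b\|_{\BMO_{\calD,\textup{prod}}(X)}|\widetilde\Omega_k|^{1/2}$. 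For the second factor one applies the square-function equivalence of Lemma~\ref{lem:standardEst} to $g_k:=\sum_{R\in\calR_k}\Delta_R f$ (this is the substitute for scalar orthogonality), and then, exploiting that $\phi\le 2^{k+1}$ off $\Omega_{k+1}$ and that each $R\in\calR_k$ meets $\Omega_{k+1}$ in at most half its measure, runs the usual absorption to control it by $2^k$ times a measure/mass localised near level $k$; the passage between $L^2(X')$-norms of square functions and $\ell^2$-sums of Haar coefficients is handled by the Khintchine--Maurey inequalities. Multiplying the two factors and summing over $k$ is then brought back to $\|\phi\|_{L^1(\R^d)}=\|S_\calD f\|_{L^1(X')}$ by a scalar computation with the level sets.

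\emph{Main obstacle.} The one genuinely bi-parameter point, and the step I expect to be hardest to make precise, is the control of the enlargements $\widetilde\Omega_k$: the strong (bi-parameter) maximal function is not of weak type $(1,1)$, so neither $|\widetilde\Omega_k|\lesssim|\Omega_k|$ nor the final summation over $k$ is automatic. This is repaired exactly as in the scalar product $H^1$--$\BMO$ duality, i.e.\ by Journé's covering lemma --- or, equivalently, by routing the whole argument through the atomic decomposition of $H^1_\calD(X')$, whereupon testing $b$ against a product atom supported on an open set $\Omega$ reduces in a single Cauchy--Schwarz step to the defining inequality of $\BMO_{\calD,\textup{prod}}(X)$ and the $L^2(X')$-normalisation of the atom (the covering difficulty then being absorbed into the proof of the atomic decomposition). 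Apart from this, the only thing to verify is that, in a $\UMD$ function lattice, the square-function estimates of Lemma~\ref{lem:standardEst} and the Khintchine--Maurey inequalities are an adequate replacement for the orthogonality used freely when $X$ is a Hilbert space; this is routine, and is precisely what is meant by ``the proof is similar to the scalar-valued case.''
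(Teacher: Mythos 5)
You have the right overall shape (a Chang--Fefferman stopping time on the square function followed by Cauchy--Schwarz and the defining $\BMO$-inequality on the $b$-side), but two of your choices are genuinely problematic and your diagnosis of the ``main obstacle'' is incorrect. First, you stop on $M_\calD\phi$ rather than on $\phi = |S_\calD f|_{X'}$ itself; besides being an unnecessary extra layer, when you come to the final summation you are left with $\sum_k 2^k|\Omega_k|\sim\|M_\calD\phi\|_{L^1(\R^d)}$, and the strong maximal function of an $L^1$ function is generically not integrable. The paper stops directly on $\phi$, defining $U_k=\{\phi>2^{-k}\}$, and then $\sum_k 2^{-k}|U_k|\sim\|\phi\|_{L^1}=\|S_\calD f\|_{L^1(X')}$ as required. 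For the enlargements $\tilde U_k = \{M_\calD 1_{U_k}>1/2\}$, the $L^2$-boundedness of the strong maximal function already gives $|\tilde U_k|\lesssim|U_k|$ --- weak type $(1,1)$ is never needed --- so Journ\'e's covering lemma plays no role; it enters the continuous and atomic theory, not the dyadic $H^1$--$\BMO$ inequality, which is proved directly and cleanly here.

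Second, and more seriously, your handling of the $f$-factor does not close in the lattice setting. After Cauchy--Schwarz you face $\|(\sum_{R\in\calR_k}|\Delta_R f|^2)^{1/2}\|_{L^2(X')}$, and you propose to control it by the square-function equivalence applied to $g_k=\sum_{R\in\calR_k}\Delta_R f$ together with a ``usual absorption.'' But the square-function equivalence only relates $\|g_k\|_{L^2(X')}$ back to the same truncated square function, so it is circular here, and the scalar absorption $\sum_{R\in\calR_k}|\langle f,h_R\rangle|^2\le 2\int_{\Omega_{k+1}^c}\sum_{R\in\calR_k}|\langle f,h_R\rangle|^2\frac{1_R}{|R|}$ rests on the $L^2$-isometry of the Haar system, i.e.\ on precisely the scalar orthogonality you observe is unavailable: in a general lattice, $\|(\sum_{R\in\calR_k}|\Delta_R f|^2)^{1/2}\|_{L^2(X')}^2$ is \emph{not} a sum of squared coefficients, and Khintchine--Maurey does not recover this. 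The step you are missing is the paper's ordering: insert the indicator $1_{\tilde U_k}1_{U_{k-1}^c}$ \emph{before} Cauchy--Schwarz (this costs only a factor of $2$, since $|R\cap\tilde U_k\cap U_{k-1}^c|\ge|R|/2$ for every $R\in\calR_k$), and then \emph{after} Cauchy--Schwarz enlarge the sum in the $f$-factor from $\calR_k$ to all of $\calD$, which is harmless by lattice monotonicity. This turns the truncated square function into the full $S_\calD f$, whose $X'$-norm is pointwise $\le 2^{-(k-1)}$ on $U_{k-1}^c$, and the estimate then follows from $|\tilde U_k|\lesssim|U_k|$. Without this restrict-then-enlarge step the argument does not go through for lattice-valued $b$ and $f$.
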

\begin{proof}
We may assume $\| S_{\calD} f \|_{L^1(X')} < \infty$.
Estimate
$$
|\langle b, f\rangle| \le  \sum_{R \in \calD}\big\{ |\langle b, h_R\rangle|, |\langle f, h_R\rangle| \big\}_X.
$$
Given $k \in \Z$ we define
$$
U_k = \big\{x \colon \big| S_{\calD} f(x)  \big|_{X'} > 2^{-k} \big\},
$$
and $\widehat\calR_k = \{R \in \calD\colon |R \cap U_k| > |R|/2\}$. If $R \in \widehat\calR_k$, then
$R \subset \tilde U_k := \{x \colon M_{\calD}1_{U_k} > 1/2\}$, and, of course, $|\tilde U_k| \lesssim |U_k|$.

For all $R_0 \in \calD$
and $x \in R_0$ we have
$$
\big| S_{\calD} f(x)  \big|_{X'} \ge   \frac{|\langle f, h_{R_0}\rangle|_{X'}}{|R_0|^{1/2}}.
$$
If $|\langle f, h_{R_0}\rangle|_{X'} > 0$, then this implies $R_0 \in \widehat\calR_k$ for all large enough $k$. As $|U_k| \to 0$ when $k \to -\infty$ we also
have that $R_0 \not \in \widehat\calR_k$ for all small enough $k$. Let $\calR_k = \widehat\calR_k \setminus \widehat\calR_{k-1}$, $k \in \Z$, and notice that by above
\begin{align*}
&\sum_{R \in \calD}\big\{ |\langle b, h_R\rangle|, |\langle f, h_R\rangle| \big\}_X  \\
&\le 2\sum_{k \in \Z} \sum_{R \in \calR_k}
\Big\langle |\langle b, h_R\rangle| \frac{1_R}{|R|^{1/2}}, |\langle f, h_R\rangle| \frac{1_R}{|R|^{1/2}} 1_{\tilde U_k}1_{U_{k-1}^c} \Big\rangle \\
&\lesssim \sum_{k \in \Z} \Big\| \Big( \sum_{ \substack{ R \in \calD \\ R \subset \tilde U_k }} |\langle b, h_R \rangle|^2 \frac{1_R}{|R|}
\Big)^{1/2}\Big\|_{L^2(X)} \Big\| \Big( \sum_{R \in \calD} |\langle f, h_R\rangle|^2 \frac{1_R}{|R|} \Big)^{1/2} 1_{\tilde U_k}1_{U_{k-1}^c} \Big\|_{L^2(X')} \\
&\leq \sum_{k \in \Z}\|b\|_{\BMO_{\calD, \textup{prod}}(X)} \abs{\tilde U_k}^{1/2}\times  \Norm{ 2^{-(k-1)}1_{\tilde U_k}}{L^2} \\
&\lesssim \sum_{k \in \Z}\|b\|_{\BMO_{\calD, \textup{prod}}(X)}  2^{-k} \abs{U_k}
\sim  \|b\|_{\BMO_{\calD, \textup{prod}}(X)} \| S_{\calD} f \|_{L^1(X')}.\qedhere
\end{align*}
\end{proof}

\subsubsection*{Paraproducts}
When we represent a singular integral using dyadic model operators \cite{Hy, Hy2}, one type of model operator that appears is a paraproduct. 
However, this will not be the only source of paraproducts in our arguments. In fact, the main source comes from the desire
to expand products of functions $bf$ -- that appear naturally in commutators -- using paraproducts.
In this section we explain how this is done.

For $i \in \{1,2\}$ we define the one-parameter paraproducts
$$
A^i_1(b,f) = \sum_{I_i \in \calD^{i}} \Delta_{I_i}^i b \Delta_{I_i}^i f, \, \,
A^i_2(b,f) = \sum_{I_i \in \calD^{i}} \Delta_{I_i}^i b E_{I_i}^i f \,\, \textup{ and } \,\, A^i_3(b, f) = \sum_{I_i \in \calD^{i}} E_{I_i}^i b \Delta_{I_i}^i f.
$$
By writing $b = \sum_{I_i} \Delta_{I_i}^i b$ and $f = \sum_{J_i} \Delta_{J_i}^i f$, and collapsing sums such as
$\sum_{J_i \colon I_i \subsetneq J_i} \Delta_{J_i}^i f = E_{I_i}^i f$,
we formally have
$$
bf = \sum_{I_i} \Delta_{I_i}^i b \Delta_{I_i}^i f 
 + \sum_{I_i \subsetneq J_i } \Delta_{I_i}^i b \Delta_{J_i}^i f + \sum_{J_i \subsetneq I_i } \Delta_{I_i}^i b \Delta_{J_i}^i f
=
\sum_{j=1}^3 A^i_j(b,f).
$$
Sometimes, when we pair $bf$ with a tensor product of Haar functions, only one of the appearing Haar functions is cancellative.
In this type of a situation we expand using these one-parameter
paraproducts, like above, in the parameter $i \in \{1,2\}$ where the cancellative Haar function appears. 

A feature of such decompositions is that the paraproduct $A^i_3(b, f)$ appears (e.g. such paraproducts do not appear in the dyadic
representation of singular integrals). Indeed, there is an average on $b$, and so no $\BMO$
type philosophy on that parameter can characterize its boundedness. However, these will appear in commutator decompositions and
the cancellation present in the commutator is key to handling such terms. In fact, none of the one-parameter paraproducts
are bounded by themselves with the assumption $b \in \BMO_{\operatorname{prod}}$.

We continue to define bi-parameter paraproducts, which can be obtained by chaining two one-parameter paraproducts.
For $j_1, j_2 \in \{1, 2, 3\}$ define formally
$$
A_{j_1, j_2}(b,f) = A^1_{j_1}A^2_{j_2}(b,f)
$$
so that e.g.
$$
A_{1, 2}(b,f) = \sum_{I_2 \in \calD^{2}} A^1_1(\Delta_{I_2}^2 b, E_{I_2}^2 f) = \sum_{R = I_1 \times I_2 \in \calD} \Delta_R b \Delta_{I_1}^1 E_{I_2}^2 f.
$$
We can again expand the product $bf$ as follows
$$
bf = \sum_{j_1, j_2 \in \{1,2,3\}} A_{j_1, j_2}(b,f).
$$
Now, those bi-parameter paraproducts where $j_1, j_2 \in \{1,2\}$, will be automatically bounded if $b$ is in product $\BMO$.
The rest will have to be handled by exploiting the cancellation of the commutator.

We now formulate the vector-valued setting, where we can prove the boundedness of our paraproducts.
We say that $(X_1, X_2, X_3)$ is a compatible triple of function lattices, if
each $X_i$ is a Banach lattice defined using the same
underlying measure space $(\Omega, \mu)$, $X_2$ and $X_3$ are $\UMD$, and we have
$$
|x_1x_2|_{X_3} \le |x_1|_{X_1} |x_2|_{X_2}.
$$
\begin{prop}\label{prop:ParaLattice}
Let $p_1, p_2 \in (1,\infty)$ and $(X_1, X_2, X_3)$ be a compatible triple of function lattices.
For $j_1, j_2 \in \{1,2\}$ we have the paraproduct estimate
$$
\|A_{j_1, j_2}(b,f)\|_{L^{p_1}_{x_1} L^{p_2}_{x_2}(X_3)} \lesssim \|b\|_{\BMO_{\calD, \textup{prod}}(X_1)} \|f\|_{L^{p_1}_{x_1} L^{p_2}_{x_2}(X_2)}.
$$
\end{prop}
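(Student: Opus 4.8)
The plan is to reduce, by duality, to an $H^1$--$\BMO$ estimate in the lattice $X_1$ and then to absorb $f$ and $g$ using the mixed square/maximal function bounds of Lemma~\ref{lem:standardEst}. By a routine density argument it suffices to prove the trilinear bound
\[
  |\langle A_{j_1,j_2}(b,f),g\rangle|\lesssim \|b\|_{\BMO_{\calD, \textup{prod}}(X_1)}\,\|f\|_{L^{p_1}_{x_1}L^{p_2}_{x_2}(X_2)}\,\|g\|_{L^{p_1'}_{x_1}L^{p_2'}_{x_2}(X_3')}
\]
for $b\in\BMO_{\calD, \textup{prod}}(X_1)$ and for $f,g$ with finite Haar expansions, so that every sum over $R\in\calD$ below is finite and all rearrangements are trivial; here $X_3'=X_3^*$ (as $\UMD$ lattices are reflexive) is again a $\UMD$ function lattice, and $X_1'$ is the K\"othe dual of $X_1$. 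For each $(j_1,j_2)\in\{1,2\}^2$ one has, with $R=I_1\times I_2$,
\[
  A_{j_1,j_2}(b,f)=\sum_{R\in\calD}\Delta_R b\cdot\widetilde f_R,\qquad \widetilde f_R\in\{\Delta_R f,\ \Delta^1_{I_1}E^2_{I_2}f,\ E^1_{I_1}\Delta^2_{I_2}f,\ E^1_{I_1}E^2_{I_2}f\}.
\]

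First I would move $b$ out of the pairing. Using $\Delta_R b(x)=h_R(x)\langle b,h_R\rangle$ with $\langle b,h_R\rangle\in X_1$, and that the lattice product $X_1\times X_2\to X_3$ dualizes to a product $X_2\times X_3'\to X_1'$ with $|x_2x_3'|_{X_1'}\le|x_2|_{X_2}|x_3'|_{X_3'}$, one rewrites $\langle A_{j_1,j_2}(b,f),g\rangle=\langle b,\Phi\rangle$, where $\Phi:=\sum_{R\in\calD}\gamma_R h_R$ and $\gamma_R:=\langle\widetilde f_R\cdot g,h_R\rangle\in X_1'$. Computing the inner Haar pairings collapses each $\gamma_R$ into a product of two elementary pieces; e.g.\ $\gamma_R=\langle f\rangle_R\langle g,h_R\rangle$ for $(j_1,j_2)=(2,2)$, $\gamma_R=\langle f,h_R\rangle\langle g\rangle_R$ for $(1,1)$, and $\gamma_R=\langle\langle f,h_{I_1}\rangle_1\rangle_{I_2}\cdot\langle\langle g,h_{I_2}\rangle_2\rangle_{I_1}$ for $(1,2)$, with the symmetric expression for $(2,1)$.

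The key step is to estimate $S_\calD\Phi=\big(\sum_R|\gamma_R|^2\,1_R/|R|\big)^{1/2}$ pointwise --- in the space variable $x$ and in the lattice variable $\omega$ --- by a product of a ``mixed'' quantity in $f$ and one in $g$. For $(2,2)$, the bound $|\langle f\rangle_R|1_R\le M_\calD f$ gives $S_\calD\Phi\le M_\calD f\cdot S_\calD g$; case $(1,1)$ is symmetric. For $(1,2)$, bounding $|\langle\,\cdot\,\rangle_{I_2}|1_{I_2}\le M_{\calD^2}$ and $|\langle\,\cdot\,\rangle_{I_1}|1_{I_1}\le M_{\calD^1}$ and noting that after these bounds the surviving $I_1$- and $I_2$-sums \emph{separate}, one obtains $S_\calD\Phi\le S_{\calD^1,M_{\calD^2}}f\cdot S_{\calD^2,M_{\calD^1}}g$; case $(2,1)$ is symmetric. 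In each case $|S_\calD\Phi(x)|_{X_1'}$ is at most the product of the corresponding $X_2$- and $X_3'$-norms, so H\"older in the mixed norm followed by Lemma~\ref{lem:standardEst} (applied over $X_2$ and over the $\UMD$ function lattice $X_3'$: the $S_{\calD^1,M_{\calD^2}}/S_{\calD^2,M_{\calD^1}}$ bound, the Fefferman--Stein bound for $M_\calD$, and $\|S_\calD g\|\sim\|g\|$) yields
\[
  \|S_\calD\Phi\|_{L^1(X_1')}\lesssim \|f\|_{L^{p_1}_{x_1}L^{p_2}_{x_2}(X_2)}\,\|g\|_{L^{p_1'}_{x_1}L^{p_2'}_{x_2}(X_3')}.
\]
Finally, Lemma~\ref{lem:H1BMO} gives $|\langle b,\Phi\rangle|\lesssim\|b\|_{\BMO_{\calD, \textup{prod}}(X_1)}\|S_\calD\Phi\|_{L^1(X_1')}$, which closes the estimate.

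The main obstacle --- really the heart of the matter --- is the pointwise square-function bound for $\Phi$ in the off-diagonal cases $(1,2)$ and $(2,1)$, where one genuinely needs the mixed square/maximal operators $S_{\calD^1,M_{\calD^2}}$ and $S_{\calD^2,M_{\calD^1}}$ together with their boundedness on the mixed-norm Bochner spaces (the last inequality of Lemma~\ref{lem:standardEst}); this is precisely what lets the plain product $\BMO$ hypothesis on $b$, rather than a rectangular one, suffice. The remaining ingredients --- the lattice-duality bookkeeping, the collapse of $\gamma_R$ via the Haar pairings, and the H\"older/Fefferman--Stein manipulations --- are routine.
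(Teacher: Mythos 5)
Your proof is correct and takes essentially the same approach as the paper's: dualize, pull $b$ out, apply the $H^1$--$\BMO$ lattice duality (Lemma~\ref{lem:H1BMO}), obtain a pointwise bound $S_{\calD}\Phi\le S_{\calD^1,M_{\calD^2}}f\cdot S_{\calD^2,M_{\calD^1}}g$ (or the analogous $M_{\calD}f\cdot S_{\calD}g$ in the diagonal cases), and finish with H\"older and Lemma~\ref{lem:standardEst}. The paper only writes out the $(1,2)$ case and declares the rest ``very similar''; your write-up spells out all four, which is merely a matter of exposition, not a difference in method.
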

\begin{proof}
We will show the estimate for $A_{1, 2}(b,f)$ -- the other cases are very similar.
Let $g \in L^{p_1'}_{x_1} L^{p_2'}_{x_2}(X_3')$ and estimate
\begin{align*}
|\langle A_{1,2}&(b,f), g \rangle|\le \sum_{R = I_1 \times I_2} \Big\{ |\langle b, h_R\rangle|, \Big| \Big \langle f, h_{I_1} \otimes \frac{1_{I_2}}{|I_2|} \Big\rangle \Big|
|\langle g, h_{I_1}h_{I_1} \otimes h_{I_2} \rangle| \Big\} \\
&\lesssim \|b\|_{\BMO_{\calD, \textup{prod}}(X_1)} \Big\| \Big( \sum_{R = I_1 \times I_2} \Big| \Big \langle f, h_{I_1} \otimes \frac{1_{I_2}}{|I_2|} \Big\rangle \Big|^2
|\langle g, h_{I_1}h_{I_1} \otimes h_{I_2} \rangle|^2 \frac{1_R}{|R|} \Big)^{1/2}\Big\|_{L^1(X_1')}.
\end{align*}
Here we used Lemma \ref{lem:H1BMO}. We can now further dominate this by
$$
\| S_{\calD^1, M_{\calD^2}} f\|_{L^{p_1}_{x_1} L^{p_2}_{x_2}(X_2)} \| S_{\calD^2, M_{\calD^1}} g\|_{L^{p_1'}_{x_1} L^{p_2'}_{x_2}(X_3')},
$$
and the proof is finished using Lemma \ref{lem:standardEst}.
\end{proof}

\begin{rem}
In the one-parameter case, paraproduct estimates are known to hold in all $\UMD$ spaces (i.e., without imposing any additional function lattice structure), but the proofs become slightly more demanding under such minimal assumptions. For background and a modern proof of such estimates see e.g. \cite{HH}.
\end{rem}

We are now ready to give the proof of the bi-parameter analogue of \eqref{eq:vecval1par}.
The proof follows the modern paradigm of dyadic representation theorems of singular integrals combined with
the paraproduct decompositions of commutators.

\begin{rem}
A vector-valued estimate for $[b,T]$, where $T$ is a bi-parameter CZO \cite{Ma1}, could extremely likely also be proved with an adaptation of the strategy below.
\end{rem}

\begin{thm}\label{thm:vecval2par}
Let $p_1, p_2 \in (1,\infty)$ and $(X_1, X_2, X_3)$ be a compatible triple of function lattices. Let $T_{i}$ be a CZO in $\R^{d_i}$, $i=1,2$.
Then we have
$$
\|[T_1, [b, T_2]] f\|_{L^{p_1}_{x_1} L^{p_2}_{x_2}(X_3)} \lesssim \|b\|_{\BMO_{\textup{prod}}(X_1)} \|f\|_{L^{p_1}_{x_1} L^{p_2}_{x_2}(X_2)}.
$$
\end{thm}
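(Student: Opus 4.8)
The plan is to use the dyadic representation theorem for each $T_i$ to reduce to the case where $T_1$ and $T_2$ are (sums of) dyadic model operators, and then for each pair of model operators expand the bi-commutator $[T_1,[b,T_2]]$ into a finite sum of paraproduct-type pieces using the decomposition $bf=\sum_{j_1,j_2}A_{j_1,j_2}(b,f)$ described above. By the representation theorems of \cite{Hy, Hy2} (applied in each parameter separately, so that the full bi-parameter operator $T_1\otimes$ nothing — here $T_1$ and $T_2$ act on different variables, so the representation is really two independent one-parameter representations), it suffices to bound, with the expected constant and uniformly in the decay parameters of the DMOs, the bi-commutator where $T_i$ is replaced by a shift $S_{i}$ of complexity $(k_i,l_i)$, gaining the geometric factor $2^{-\max(k_i,l_i)\delta_i/2}$ (or a bounded factor for the paraproduct components), which is then summed up. The averaging over dyadic grids at the end turns the dyadic product $\BMO$ norm into the honest $\BMO_{\textup{prod}}(X_1)$ norm.

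First I would set up, for fixed dyadic grids $\calD^1,\calD^2$ and fixed shifts $S_1,S_2$, the algebraic identity
$$
\langle [S_1,[b,S_2]]f,g\rangle = \langle b,\ S_2f\cdot S_1^*g - f\cdot S_1^*S_2^*g - S_1S_2f\cdot g + S_1f\cdot S_2^*g\rangle,
$$
and then, in each of the four terms, expand every product of two functions living in the same parameter by the one-parameter paraproduct identity $hk=\sum_{j=1}^3 A^i_j(h,k)$, working parameter by parameter. This produces a finite list of ``iterated paraproduct'' pieces. The pieces in which $b$ is hit by a cancellative Haar projection in \emph{both} parameters — i.e. the pieces reducible to the bi-parameter paraproducts $A_{j_1,j_2}(b,\cdot)$ with $j_1,j_2\in\{1,2\}$, or their adjoints — are handled directly by Proposition \ref{prop:ParaLattice} together with the $\UMD$ function lattice boundedness of the shifts $S_i$ (Lemma \ref{lem:standardEst}); here one uses that $X_3'$ is again a $\UMD$ function lattice and that $(X_2',X_3,X_2)$-type compatibility holds after dualizing. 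The remaining ``illegal'' pieces — those containing an average $E_{I_i}^i b$ in some parameter, equivalently the paraproducts $A_{j_1,j_2}$ with some $j_i=3$ — do not make sense in isolation (they involve no cancellation on $b$), but, exactly as in the one-parameter commutator theory, they \emph{cancel} pairwise when the four terms of the commutator identity are recombined: the genuinely non-cancellative contributions telescope, leaving only finitely many differences of the form $(E_{I_i}^i b - E_{J_i}^i b)$ over a controlled chain of cubes, whose $\BMO$-norm grows only logarithmically and is absorbed by the geometric decay of the shift. This is the standard mechanism by which $[b,\cdot]$ ``tames'' the $A_3$ paraproduct.

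The main obstacle will be the bookkeeping of the commutator cancellation in the bi-parameter setting: there are four commutator terms, each expands into up to $3\times 3$ one-parameter paraproduct pieces per parameter, and the cancellation that kills the $A_3$-type pieces is genuinely two-dimensional — one must check that the non-cancellative contributions cancel \emph{separately in each parameter}, which requires organizing the sum so that the telescoping in parameter $1$ is carried out at fixed parameter-$2$ data and vice versa. A secondary technical point is that the dyadic representation must be applied with the commutator structure in mind: rather than representing $T_1$ and $T_2$ first and then commuting (which is legitimate since the representation is $L^p$-convergent and the commutator is a bounded bilinear-in-$(b,f)$ expression once $b\in\BMO_{\textup{prod}}\subset L^2_{\loc}$), one should verify that the swap is justified on the dense class $f\in L^\infty_c$ and $g\in L^\infty_c$, which follows from the kernel representation Proposition \ref{prop:pointwise} (whose hypothesis is met since $B(x,y)$ is controlled by the product $\BMO$ modulus and the kernels are standard) exactly as in the proof there. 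Once the decomposition is in place, each piece is estimated by $\|b\|_{\BMO_{\calD,\textup{prod}}(X_1)}\|f\|_{L^{p_1}_{x_1}L^{p_2}_{x_2}(X_2)}$ times a summable factor, and averaging over the grids concludes.
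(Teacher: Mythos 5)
Your overall strategy matches the paper's proof: reduce via the dyadic representation theorem to dyadic model operators $S_i,\pi_i$, expand the four products in $\langle[U_1,[b,U_2]]f,g\rangle$ using the paraproducts $A_{j_1,j_2}$ and $A^i_{k}$, dispatch the pieces with $j_1,j_2\in\{1,2\}$ via Proposition~\ref{prop:ParaLattice} together with the vector-valued $L^{p_1}_{x_1}L^{p_2}_{x_2}(X)$-boundedness of the DMOs, and then group the remaining ``illegal'' pieces across the four commutator summands to recover cancellation. Up to that point you and the paper are doing the same thing.

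The gap is in the mechanism you propose for the illegal pieces, and it is not cosmetic. You assert that these ``telescope, leaving only finitely many differences of the form $(E^i_{I_i}b-E^i_{J_i}b)$ over a controlled chain of cubes, whose $\BMO$-norm grows only logarithmically and is absorbed by the geometric decay of the shift.'' That is the one-parameter picture, and it fails for $A_{3,3}(b,\cdot)$, which carries averages on $b$ in \emph{both} parameters. Pairing two $A_{3,3}$-pieces against each other and telescoping separately in each parameter can only produce a difference of averages over a \emph{finite} chain in each slot, hence bounded complexity in each slot; but one of the two cancellative Haar factors on $b$ must come from an \emph{infinite-depth} martingale expansion, controlled not by the geometric decay of the shift but by $H^1$--$\BMO$ duality (Lemma~\ref{lem:H1BMO}). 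Concretely, in the case $U_1=S_1$, $U_2=\pi_2$ worked out in the paper, one must pair a bi-parameter piece with a one-parameter piece from a \emph{different} commutator summand, e.g.\ $S_1(A_{3,3}(b,\pi_2 f))-S_1\pi_2(A^1_3(b,f))$, where the cancellation is the identity $[\langle b\rangle_{I_1,1}-\langle b\rangle_{I_1\times K_2}]1_{K_2}=\sum_{I_2\subset K_2}\langle\langle b,h_{I_2}\rangle_2\rangle_{I_1}h_{I_2}$ --- a full martingale expansion in parameter~2, producing one Haar $h_{I_2}$ while leaving an average $\langle\cdot\rangle_{I_1}$ in parameter~1. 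Only after \emph{also} forming the symmetric pairing $\pi_2(A^1_3(b,S_1f))-A_{3,3}(b,\pi_2 S_1 f)$ and adding does a difference $\langle\langle b,h_{I_2}\rangle_2\rangle_{J_1}-\langle\langle b,h_{I_2}\rangle_2\rangle_{I_1}$ appear that can be telescoped over the finite chain through $K_1$ (that is where the polynomial-in-complexity factor, absorbed by the representation's decay, actually comes from) to produce the second Haar. So the cancellation is a two-stage, four-way combination --- one unbounded martingale expansion plus one bounded telescope --- and the ``telescope separately, bounded by the shift decay'' heuristic would dead-end on $A_{3,3}$. You would need to discover this pairing scheme to close the argument.
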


\begin{proof}
The proof is a vector-valued adaptation of the recent commutator decompositions \cite{LMV:Bloom, LMV:BloomProdBMO, LMV:BilinBipar}. In particular, see \cite{LMV:BloomProdBMO} for a two-weight
version of this bound in the scalar-valued setting. To begin the proof, we recall that it is enough to fix lattices $\calD^i$ and
to estimate  $[U_1, [b, U_2]]f$, where $U_1$ and $U_2$ are DMOs -- dyadic model operators -- appearing in the representation theorem \cite{Hy, Hy2}.
That is, we have that $U_i \in \{S_i, \pi_i\}$, where $S_i$ is a so-called dyadic shift in $\R^{d_i}$ and $\pi_i$ is a dyadic paraproduct in $\R^{d_i}$. We will recall the
definitions during the proof. It is enough to obtain a polynomial dependence on the complexity of the shift.

We will only explicitly show the case $[S_1, [b, \pi_2]]f$ (see \cite{LMV:BloomProdBMO} for the decompositions used in the other cases). The dyadic shift $S_1$ has the form
$$
S_1 f = \sum_{ \substack{K_1 \in \calD^1 \\  I_1^{(i_1)} = J_1^{(j_1)} = K_1}} a_{K_1, I_1, J_1}  h_{J_1} \otimes \langle f, h_{I_1} \rangle_1,
$$
where $i_1,j_1  \ge 0$ and $|a_{K_1, I_1, J_1}| \le |I_1|^{1/2} |J_1|^{1/2} |K_1|^{-1}$. The paraproduct $\pi_2$ has the form
$$
\pi_2 f = \sum_{K_2 \in \calD^2} a_{K_2} \langle f \rangle_{K_2,2} \otimes h_{K_2},
$$
where $\|(a_{K_2})\|_{\BMO(\calD^2)} = \sup_{P_2 \in \calD^2} \Big( \frac{1}{|P_2|} \sum_{K_2 \subset P_2} |a_{K_2}|^2\Big)^{1/2} \le 1$.
We now write out
$$
[S_1, [b, \pi_2]]f = S_1(b \pi_2 f)  - b \pi_2 S_1 f -  S_1 \pi_2 (bf) + \pi_2 (b S_1 f).
$$
We identify the products 
$$
b \pi_2 f,\, b \pi_2 S_1 f,\, bf,\, b S_1 f
$$
that we want to expand using paraproducts. Of course, it is possible to expand these in the one-parameter sense (in either parameter) or in the bi-parameter sense.
What is actually useful is determined by the form of the operators $S_1$ and $\pi_2$, and looking at where the cancellative Haar functions appear. This leads us to e.g.
expand
\begin{align*}
b \pi_2 f &= \sum_{k_1, k_2 \in \{1,2, 3\}} A_{k_1, k_2}(b, \pi_2 f)\\
& = \sum_{k_1, k_2 \in \{1,2\}} A_{k_1, k_2}(b, \pi_2 f) + 
\sum_{ \substack{ (k_1, k_2) \ne (3,3) \\ 3 \in \{k_1, k_2\}}}  A_{k_1, k_2}(b, \pi_2 f) + A_{3, 3}(b, \pi_2 f),
\end{align*}
where the idea of the latter grouping is that in the case $k_1, k_2 \in \{1,2\}$ we can directly use Proposition \ref{prop:ParaLattice},
while the remaining terms cannot be handled alone. They need to be grouped with terms coming from the other product expansions, which are as follows:
$$
b \pi_2 S_1 f = \sum_{k_1, k_2 \in \{1,2\}} A_{k_1, k_2}(b, \pi_2 S_1 f) + 
\sum_{ \substack{ (k_1, k_2) \ne (3,3) \\ 3 \in \{k_1, k_2\}}}  A_{k_1, k_2}(b, \pi_2 S_1 f) + A_{3, 3}(b, \pi_2 S_1 f),
$$
$$
bf = \sum_{k_1 = 1}^2 A_{k_1}^1(b,f) + A_{3}^1(b,f)
$$
and
$$
b S_1 f = \sum_{k_1 = 1}^2 A_{k_1}^1(b,S_1f) + A_{3}^1(b,S_1 f).
$$
None of the one-parameter paraproducts are bounded as they are. However, when $k_1 \ne 3$, they will be paired with \textbf{one} other term
coming from the other product expansions. In this sense, they are similar to the terms of the form $A_{k_1, k_2}(b, F)$,  $(k_1, k_2) \ne (3,3)$, $3 \in \{k_1, k_2\}$.
Finally, all the four terms $A_{3, 3}(b, \pi_2 f)$, $A_{3, 3}(b, \pi_2 S_1 f)$, $A_{3}^1(b, f)$ and $A_{3}^1(b,S_1 f)$ will be grouped together.

The above explanation leads us to the long expansion
\begin{align*}
[S_1, [b, \pi_2]]f  &=  \sum_{k_1, k_2 \in \{1,2\}} S_1(A_{k_1, k_2}(b, \pi_2 f)) - \sum_{k_1, k_2 \in \{1,2\}} A_{k_1, k_2}(b, \pi_2 S_1 f) \\
&+ \Big\{ \sum_{ \substack{ (k_1, k_2) \ne (3,3) \\ 3 \in \{k_1, k_2\}}}  S_1(A_{k_1, k_2}(b, \pi_2 f)) + \sum_{k_1=1}^2 \pi_2(A_{k_1}^1(b, S_1 f)) \\
& \qquad - \sum_{ \substack{ (k_1, k_2) \ne (3,3) \\ 3 \in \{k_1, k_2\}}} A_{k_1, k_2} (b, \pi_2 S_1 f) - \sum_{k_1=1}^2 S_1 \pi_2(A_{k_1}^1(b,f)) \Big\} \\
&+ \big\{ [S_1(A_{3, 3}(b, \pi_2 f)) -S_1\pi_2(A_{3}^1(b, f))] + [\pi_2(A_{3}^1(b, S_1 f)) - A_{3, 3}(b, \pi_2 S_1 f)] \big\}.
\end{align*}
Using Proposition \ref{prop:ParaLattice} and the fact (see \cite{HH}) that $S_1$ and $\pi_2$ are bounded in $L^{s_1}_{x_1} L^{s_2}_{x_2}(X)$ for all $s_1, s_2 \in (1,\infty)$ and for all $\UMD$ function lattices $X$, we have that the first two terms in the expansion above automatically satisfy the desired estimate.
The big group in the middle consists of the various terms, where exactly one of the paraproduct indices is $3$ or we have
a one-parameter paraproduct with an indice $k_1 \ne 3$. They will not all be handled together, but rather by always pairing two suitable ones together (notice that
there are $6$ terms with a plus sign and $6$ terms with a minus sign). It is not straightforward to explain which terms should be paired together -- it can be seen by analysing their forms. We get to them later.

We start our work with the last term, which is the combination of the four terms, where all the paraproduct indices are $3$. 
First, we have by the definition of $A_{3,3}$ and $\pi_2$ that
\begin{align*}
A_{3,3}(b, \pi_2 f) &= \sum_{L_1, L_2} \langle b \rangle_{L_1 \times L_2} \langle \pi_2 f, h_{L_1} \otimes h_{L_2} \rangle h_{L_1} \otimes h_{L_2} \\
&= \sum_{L_1, L_2} \langle b \rangle_{L_1 \times L_2} \Big\langle \sum_{K_2} a_{K_2} \langle f \rangle_{K_2,2} \otimes h_{K_2} , h_{L_1} \otimes h_{L_2} \Big \rangle h_{L_1} \otimes h_{L_2} \\
&= \sum_{L_1, K_2} a_{K_2} \langle b \rangle_{L_1 \times K_2} \Big \langle f, h_{L_1} \otimes \frac{1_{K_2}}{|K_2|} \Big\rangle h_{L_1} \otimes h_{K_2}.
\end{align*}
Now, using the definition of $S_1$ we calculate
\begin{align*}
S_1(&A_{3, 3}(b, \pi_2 f)) = \sum_{ \substack{K_1 \\  I_1^{(i_1)} = J_1^{(j_1)} = K_1}} a_{K_1, I_1, J_1}  h_{J_1} \otimes \langle A_{3,3}(b, \pi_2 f), h_{I_1} \rangle_1 \\
&= \sum_{ \substack{K_1 \\  I_1^{(i_1)} = J_1^{(j_1)} = K_1}} a_{K_1, I_1, J_1}  h_{J_1} \otimes \Big \langle
\sum_{L_1, K_2} a_{K_2} \langle b \rangle_{L_1 \times K_2} \Big \langle f, h_{L_1} \otimes \frac{1_{K_2}}{|K_2|} \Big\rangle h_{L_1} \otimes h_{K_2}, h_{I_1} \Big\rangle_1 \\
&= \sum_{K_2} a_{K_2} \sum_{ \substack{K_1 \\  I_1^{(i_1)} = J_1^{(j_1)} = K_1}} a_{K_1, I_1, J_1} 
 \langle b \rangle_{I_1 \times K_2} \Big \langle f, h_{I_1} \otimes \frac{1_{K_2}}{|K_2|} \Big\rangle h_{J_1} \otimes h_{K_2}.
\end{align*}
The next task is to perform a completely analogous calculation to get that
$$
S_1\pi_2(A_{3}^1(b, f)) =
\sum_{K_2} a_{K_2} \sum_{ \substack{K_1 \\  I_1^{(i_1)} = J_1^{(j_1)} = K_1}} a_{K_1, I_1, J_1} 
 \bla \langle b\rangle_{I_1,1}\langle f, h_{I_1}\rangle_1\bra_{K_2}  h_{J_1} \otimes h_{K_2}.
$$
We can now write
\begin{equation}\label{eq:eq2}
\begin{split}
S_1(&A_{3, 3}(b, \pi_2 f)) - S_1\pi_2(A_{3}^1(b, f)) \\
&= \sum_{K_2} a_{K_2} \sum_{ \substack{K_1 \\  I_1^{(i_1)} = J_1^{(j_1)} = K_1}} a_{K_1, I_1, J_1} \Big\langle \big[\langle b\rangle_{I_1\times K_2}-\langle b\rangle_{I_1,1}\big]\langle f, h_{I_1}\rangle_1 \Big\rangle_{K_2}
   h_{J_1} \otimes h_{K_2}.
\end{split}
\end{equation}
Here the term $\big\langle \big[\langle b\rangle_{I_1\times K_2}-\langle b\rangle_{I_1,1}\big]\langle f, h_{I_1}\rangle_1 \big\rangle_{K_2}$ appears, which we further expand by applying the martingale difference expansion $g1_{K_2} = \sum_{I_2 \subset K_2} \langle g, h_{I_2} \rangle h_{I_2} + \langle g \rangle_{K_2}1_{K_2}$ to the function $\langle b\rangle_{I_1,1}$. This gives that
\begin{align*}
\Big\langle \big[\langle b\rangle_{I_1\times K_2}-\langle b\rangle_{I_1,1}\big]\langle f, h_{I_1}\rangle_1 \Big\rangle_{K_2} 
&= - \Big\langle \Big[ \sum_{I_2 \subset K_2} \langle \langle b, h_{I_2} \rangle_2 \rangle_{I_1} h_{I_2} \Big] \langle f, h_{I_1}\rangle_1 \Big\rangle_{K_2} \\
&= -\frac{1}{|K_2|} \sum_{I_2 \subset K_2} \langle \langle b, h_{I_2} \rangle_2 \rangle_{I_1}  \langle f, h_{I_1} \otimes h_{I_2} \rangle.
\end{align*}
Plugging this into \eqref{eq:eq2} gives the final
formula
\begin{align*}
S_1(&A_{3, 3}(b, \pi_2 f)) - S_1\pi_2(A_{3}^1(b, f)) \\
&= -\sum_{K_2} \frac{a_{K_2}}{|K_2|}
  \sum_{ \substack{K_1  \\  I_1^{(i_1)} = J_1^{(j_1)} = K_1}} \sum_{I_2 \subset K_2}  a_{K_1, I_1, J_1} 
  \langle \langle b, h_{I_2} \rangle_2 \rangle_{I_1}  \langle f, h_{I_1} \otimes h_{I_2} \rangle h_{J_1} \otimes h_{K_2}.
\end{align*}

This is clear progress as there is now one cancellative Haar function paired with $b$. On the other hand, this is simply not enough,
since we need two cancellative Haar functions to exploit the product $\BMO$ assumption.
However, with very similar calculation as above we see that
\begin{align*}
\pi_2(&A_{3}^1(b, S_1 f)) - A_{3, 3}(b, \pi_2 S_1 f) \\
&= \sum_{K_2} \frac{a_{K_2}}{|K_2|}
  \sum_{ \substack{K_1  \\  I_1^{(i_1)} = J_1^{(j_1)} = K_1}} \sum_{I_2 \subset K_2}   a_{K_1, I_1, J_1} 
  \langle \langle b, h_{I_2} \rangle_2 \rangle_{J_1}  \langle f, h_{I_1} \otimes h_{I_2} \rangle h_{J_1} \otimes h_{K_2}.
\end{align*}
The only difference is that we have a plus sign in front and that $\langle \langle b, h_{I_2} \rangle_2 \rangle_{I_1}$ is replaced by $\langle \langle b, h_{I_2} \rangle_2 \rangle_{J_1}$. We have arrived at the combined formula
\begin{align*}
 &[S_1(A_{3, 3}(b, \pi_2 f)) -S_1\pi_2(A_{3}^1(b, f))] + [\pi_2(A_{3}^1(b, S_1 f)) - A_{3, 3}(b, \pi_2 S_1 f)]  \\
 &=  \sum_{K_2} \frac{a_{K_2}}{|K_2|}
  \sum_{ \substack{K_1  \\  I_1^{(i_1)} = J_1^{(j_1)} = K_1}} \sum_{I_2 \subset K_2}   a_{K_1, I_1, J_1} [ \langle \langle b, h_{I_2} \rangle_2 \rangle_{J_1} - \langle \langle b, h_{I_2} \rangle_2 \rangle_{I_1} ] \\
&\hspace{8cm} \times 
\langle f, h_{I_1} \otimes h_{I_2} \rangle   h_{J_1} \otimes h_{K_2}.
\end{align*}
Next, we add and subtract $\langle \langle b, h_{I_2} \rangle_2 \rangle_{K_1}$ and e.g.
write
$$
\langle \langle b, h_{I_2} \rangle_2 \rangle_{J_1} - \langle \langle b, h_{I_2} \rangle_2 \rangle_{K_1}
= \sum_{l_1 = 1}^{j_1} \langle b, h_{J_1^{(l_1)}} \otimes h_{I_2} \rangle \langle h_{J_1^{(l_1)}} \rangle_{J_1}.
$$
It is now enough to fix $l_1 \in \{1, \ldots, j_1\}$ and consider, for $g \in L^{p_1'}_{x_1} L^{p_2'}_{x_2}(X_3')$, the following dualized term
\begin{align*}
  \sum_{K_1} \sum_{L_1^{(j_1 -l_1)} = K_1}&
   \sum_{I_2}   \Big\{ |\langle b, h_{L_1} \otimes h_{I_2} \rangle|, \\ &\sum_{ \substack{ I_1^{(i_1)} = K_1 \\ J_1^{(l_1)} = L_1}} |L_1|^{-1/2} |a_{K_1, I_1, J_1}|
|\langle f, h_{I_1} \otimes h_{I_2} \rangle| \sum_{K_2 \supset I_2} \frac{|a_{K_2}|}{|K_2|} | \langle g,  h_{J_1} \otimes h_{K_2}\rangle| \Big\}.
\end{align*}
Notice that we can estimate
\begin{align*}
\sum_{K_2 \supset I_2} \frac{|a_{K_2}|}{|K_2|}| \langle g,  h_{J_1} \otimes h_{K_2}\rangle|
&= \Big\langle \sum_{K_2 \supset I_2} |a_{K_2}| |\langle g,  h_{J_1} \otimes h_{K_2}\rangle| \frac{1_{K_2}}{|K_2|}  \Big\rangle_{I_2} \\
&\le |J_1|^{1/2} \Big\langle \sum_{K_2} |a_{K_2}| \bla |\langle g,   h_{K_2}\rangle_2|\bra_{J_1} \frac{1_{K_2}}{|K_2|}  \Big\rangle_{I_2},
\end{align*}
and use the normalisation of $|a_{K_1, I_1, J_1}|$ to estimate
\begin{align*}
|L_1|^{-1/2} |a_{K_1, I_1, J_1}|
|\langle f, h_{I_1} \otimes h_{I_2} \rangle|&\le |L_1|^{-1/2} \frac{|I_1|^{1/2}|J_1|^{1/2}}{|K_1|}
|\bla \langle \Delta_{K_1, i_1} f, h_{I_2} \rangle_2, h_{I_1}\bra|  \\
&\le  |L_1|^{-1/2} \frac{|J_1|^{1/2}}{|K_1|}\int_{I_1}| \langle \Delta_{K_1, i_1} f, h_{I_2} \rangle_2|.
\end{align*}
Now, by summing over $I_1, J_1$ we have reduced to bounding
\begin{align*}
  \sum_{K_1} \sum_{L_1^{(j_1 -l_1)} = K_1}&
   \sum_{I_2}   \Big\{ |\langle b, h_{L_1} \otimes h_{I_2} \rangle|, \\ & |L_1|^{1/2}
   \bla |\langle \Delta_{K_1, i_1} f, h_{I_2} \rangle_2| \bra_{K_1}
   \Big\langle \sum_{K_2} |a_{K_2}| \bla |\langle g, h_{K_2} \rangle_2| \bra_{L_1} \frac{1_{K_2}}{|K_2|}  \Big\rangle_{I_2}
\Big\}.
\end{align*}
We use Lemma \ref{lem:H1BMO} to dominate this by $\|b\|_{\BMO_{\calD, \textup{prod}}(X_1)} \le \|b\|_{\BMO_{\textup{prod}}(X_1)}$ multiplied with
\begin{align*}
\Big\| \Big( \sum_{K_1} \sum_{I_2} \big[ M_{\calD^1} \langle \Delta_{K_1, i_1} f, h_{I_2} \rangle_2 \big]^2 \otimes
\frac{1_{I_2}}{|I_2|} \Big)^{1/2} M_{\calD} \Big(\sum_{K_2} |a_{K_2}| |\langle g, h_{K_2} \rangle_2| \otimes \frac{1_{K_2}}{|K_2|} \Big)  \Big\|_{L^1(X_1')}.
\end{align*}
After using the estimate $|e_2e_3'|_{X_1'} \le |e_2|_{X_2} |e_3'|_{X_3'}$ this term decouples naturally by H\"older's inequality. After this
the proof can be ended using Lemma \ref{lem:standardEst} and the $L^{p_1'}_{x_1} L^{p_2'}_{x_2}(X_3')$ boundedness of the paraproduct
$$
g \mapsto \sum_{K_2} |a_{K_2}| |\langle g, h_{K_2} \rangle_2| \otimes \frac{1_{K_2}}{|K_2|}.
$$

It only remains to estimate the terms in the big block, i.e., the terms
\begin{align*}
 \sum_{ \substack{ (k_1, k_2) \ne (3,3) \\ 3 \in \{k_1, k_2\}}}  S_1(A_{k_1, k_2}(b, \pi_2 f)) &+ \sum_{k_1=1}^2 \pi_2(A_{k_1}^1(b, S_1 f)) \\
 &- \sum_{ \substack{ (k_1, k_2) \ne (3,3) \\ 3 \in \{k_1, k_2\}}} A_{k_1, k_2} (b, \pi_2 S_1 f) - \sum_{k_1=1}^2 S_1 \pi_2(A_{k_1}^1(b,f)).
\end{align*}
Here we always have to pair two terms together, one with a plus and one with a minus sign, to induce cancellation to $b$.
For example, we will consider
$$
S_1(A_{3, 1}(b, \pi_2 f)) - A_{3, 1} (b, \pi_2 S_1 f).
$$
Using that $\pi_2 S_1 = S_1 \pi_2$ and the boundedness of $\pi_2$, it is enough to consider
\begin{align*}
&S_1(A_{3, 1}(b, f)) - A_{3, 1} (b, S_1 f) \\
&= \sum_{ \substack{K_1  \\  I_1^{(i_1)} = J_1^{(j_1)} = K_1}} \sum_{K_2} a_{K_1, I_1, J_1}
\big( \bla \langle b, h_{K_2}\rangle_2 \bra_{I_1} - \bla \langle b, h_{K_2} \rangle_2 \bra_{J_1} \big)
\langle f, h_{I_1} \otimes h_{K_2} \rangle h_{J_1} \otimes h_{K_2} h_{K_2}.
\end{align*}
This can be bounded like the previous term, except it is simpler. This ends our proof.
\end{proof}

We note that the upper bound related to the lower bound of Proposition \ref{prop:RectBMO} -- that is, the upper bound related to the diagonal $p_1 = q_1$ and $p_2 = q_2$ of Theorem \ref{thm:main} -- is just the scalar-valued version of the above result.
However, this scalar-valued result is very well-known.
For general CZOs like here, the non-mixed norm version
appears in \cite{DaO}. The scalar-valued mixed-norm version could also directly be proved e.g. by extrapolating the estimate in \cite{LMV:BloomProdBMO}, which is a two-weight version of \cite{DaO}.

Lastly, even if the bi-commutators $[T_1, [b, T_2]]$ are the main topic of this paper, we record how the vector-valued estimate for the bi-commutator implies
an estimate for a commutator with three CZOs. Establishing a dotted version of the space $L^{r_3}_{x_3}$ would require some modifications as in the bi-commutator case.
\begin{prop}
Let $p_1, p_2, p_3, q_3 \in (1,\infty)$, $p_3 > q_3$ and define $r_3 \in (1,\infty)$ via
$$
\frac{1}{r_3} = \frac{1}{p_3} - \frac{1}{p_3}.
$$
Let $T_{i}$ be a CZO in $\R^{d_i}$, $i = 1,2, 3$. Then we have
$$
\|[T_1, [T_2, [b, T_3]]]f\|_{L^{p_1}_{x_1}L^{p_2}_{x_2}L^{q_3}_{x_3}} \lesssim \|b\|_{\BMO_{\textup{prod}, x_1, x_2}(L^{r_3}_{x_3})} \|f\|_{L^{p_1}_{x_1}L^{p_2}_{x_2}L^{p_3}_{x_3}}.
$$
\end{prop}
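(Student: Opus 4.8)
The plan is to reduce this threefold iterated commutator to the bi-parameter \emph{bi}-commutator handled by Theorem~\ref{thm:vecval2par}, by pushing $T_3$ to the outside and viewing the $x_3$-variable as the function-lattice variable of a compatible triple; here $r_3$ is the exponent with $1/r_3=1/q_3-1/p_3$, so that $1/q_3=1/r_3+1/p_3$. First I would record the purely algebraic identity
\[
[T_1,[T_2,[b,T_3]]]=[T_3,[T_1,[b,T_2]]],
\]
valid because $T_1,T_2,T_3$ act on three disjoint groups of variables, so that $[T_i,T_j]=0$ and a direct computation (both sides expand as $\sum_{S\subseteq\{1,2,3\}}(-1)^{|S|}T_S(b\cdot T_{\{1,2,3\}\setminus S}f)$, with $T_S=\prod_{i\in S}T_i$) makes the equality manifest on $L^\infty_c\times L^\infty_c$. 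Writing $U:=[T_1,[b,T_2]]$ --- an operator in the variables $(x_1,x_2)$ whose symbol $b=b(x_1,x_2,x_3)$ still depends on $x_3$ --- and peeling off the now-outermost $T_3$ gives
\[
[T_1,[T_2,[b,T_3]]]f=[T_3,U]f=T_3(Uf)-U(T_3f),
\]
with $T_3$ acting in $x_3$.

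Next I would dispose of the two explicit occurrences of $T_3$. Since the target space $L^{p_1}_{x_1}L^{p_2}_{x_2}L^{q_3}_{x_3}$ has $x_3$ innermost, the scalar bound $T_3\colon L^{q_3}(\R^{d_3})\to L^{q_3}(\R^{d_3})$, applied for a.e.\ fixed $(x_1,x_2)$, yields $\|T_3(Uf)\|_{L^{p_1}_{x_1}L^{p_2}_{x_2}L^{q_3}_{x_3}}\lesssim\|Uf\|_{L^{p_1}_{x_1}L^{p_2}_{x_2}L^{q_3}_{x_3}}$, and in the same way $\|T_3f\|_{L^{p_1}_{x_1}L^{p_2}_{x_2}L^{p_3}_{x_3}}\lesssim\|f\|_{L^{p_1}_{x_1}L^{p_2}_{x_2}L^{p_3}_{x_3}}$ from $T_3\colon L^{p_3}\to L^{p_3}$; notably no vector-valued singular integral theory for $T_3$ itself is required. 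Thus everything reduces to the single estimate
\[
\|[T_1,[b,T_2]]g\|_{L^{p_1}_{x_1}L^{p_2}_{x_2}L^{q_3}_{x_3}}\lesssim\|b\|_{\BMO_{\textup{prod}, x_1, x_2}(L^{r_3}_{x_3})}\,\|g\|_{L^{p_1}_{x_1}L^{p_2}_{x_2}L^{p_3}_{x_3}},
\]
to be applied once with $g=f$ and once with $g=T_3f$ (which indeed lies in $L^{p_1}_{x_1}L^{p_2}_{x_2}L^{p_3}_{x_3}$ by the previous line).

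To prove this last estimate I would invoke Theorem~\ref{thm:vecval2par} with the triple $(X_1,X_2,X_3)=(L^{r_3}(\R^{d_3}),L^{p_3}(\R^{d_3}),L^{q_3}(\R^{d_3}))$. This is a compatible triple of function lattices: $X_2$ and $X_3$ are $\UMD$ since $p_3,q_3\in(1,\infty)$, all three live on the same measure space $\R^{d_3}$, and H\"older's inequality furnishes the bilinear bound $|uv|_{L^{q_3}}\le|u|_{L^{r_3}}|v|_{L^{p_3}}$ precisely because $1/q_3=1/r_3+1/p_3$. For this instantiation the bilinear map of Theorem~\ref{thm:vecval2par} is ordinary pointwise multiplication in $x_3$, so the abstract ``$[T_1,[b,T_2]]$'' of that theorem is exactly the concrete operator here --- in which $T_1,T_2$ act only in $x_1,x_2$ while every multiplication by $b$ is pointwise in all three variables --- and $\BMO_{\textup{prod}}(X_1)$ is literally $\BMO_{\textup{prod}, x_1, x_2}(L^{r_3}_{x_3})$. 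The displayed bound then follows, and combining the two applications with the previous step completes the proof.

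I expect the only delicate point to be the bookkeeping in this last step: checking that the abstract vector-valued bi-commutator, once instantiated on $(L^{r_3},L^{p_3},L^{q_3})$, coincides with the operator at hand --- routine, but it is exactly where the compatible-triple structure and the exponent identity $1/q_3=1/r_3+1/p_3$ (equivalently $p_3>q_3$ together with the definition of $r_3$) are genuinely used. As the statement already indicates, replacing $L^{r_3}_{x_3}$ by the homogeneous $\dot L^{r_3}_{x_3}$ would instead force the subtraction of an $x_3$-dependent constant from $b$ and the use of the off-support and approximate weak factorization machinery of Section~\ref{sec:WF}, as in the bi-commutator case; for the non-homogeneous space stated here the direct argument above suffices.
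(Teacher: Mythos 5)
Your proposal is correct and is exactly the argument the paper intends: the paper states this proposition without proof, merely remarking that it follows from the vector-valued bi-commutator estimate, and your instantiation of Theorem~\ref{thm:vecval2par} with the compatible triple $(L^{r_3},L^{p_3},L^{q_3})$ (H\"older giving the bilinear bound since $1/q_3=1/r_3+1/p_3$), combined with the symmetry of the iterated commutator and the scalar $L^{q_3}$- and $L^{p_3}$-boundedness of $T_3$, fills in precisely the intended details. You also correctly identified and repaired the typo in the definition of $r_3$.
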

It is an interesting, but very complicated, problem to extend all of our results to tri-commutators, or even higher order commutators.

\end{document}